\newtheorem{theorem}{Theorem}[section]
\newtheorem{remark}[theorem]{Remark}
\newtheorem{proposition}[theorem]{Proposition}
\newtheorem{conjecture}[theorem]{Conjecture}
\newtheorem{definition}[theorem]{Definition}
\newtheorem{problem}[theorem]{Problem}
\title[Dungeons and Dragons]{Dungeons and Dragons: \\ Combinatorics for the $dP_3$ Quiver}
\begin{document}

\author{Tri Lai}
\address{Department of Mathematics, University of Nebraska, Lincoln, Nebraska 68588}
\email{tlai3@unl.edu}

\author{Gregg Musiker}
\address{School of Mathematics, University of Minnesota, Minneapolis, Minnesota 55455}
\email{musiker@math.umn.edu}

\thanks{The second author was supported by NSF Grant DMS-\#13692980.}

\subjclass[2010]{13F60, 05C30, 05C70}

\date{April 25, 2019}

\keywords{cluster algebras, combinatorics, graph theory, brane tilings}

\maketitle

\begin{abstract}
In this paper, we utilize the machinery of cluster algebras, quiver mutations, and brane tilings to study a variety of historical enumerative combinatorics questions all under one roof.  Previous work \cite{zhang, LMNT}, which arose during the second author's monitorship of undergraduates, and more recently of both authors \cite{LaiMus}, analyzed the cluster algebra associated to the cone over $\mathbf{dP_3}$, the del Pezzo surface of degree $6$ ($\mathbb{CP}^2$ blown up at three points).  By investigating sequences of toric mutations, those occurring only at vertices with two incoming and two outgoing arrows, in this cluster algebra, we obtained a family of cluster variables that could be parameterized by $\mathbb{Z}^3$ and whose Laurent expansions had elegant combinatorial interpretations in terms of dimer partition functions (in most cases).  While the earlier work \cite{zhang, LMNT, LaiMus} focused exclusively on one possible initial seed for this cluster algebra, there are in total four relevant initial seeds (up to graph isomorphism).  In the current work, we explore the combinatorics of the Laurent expansions from these other initial seeds and how this allows us to relate enumerations of perfect matchings on Dungeons to Dragons.

\end{abstract}

\tableofcontents

\section{Introduction}

Cluster algebras were introduced by Fomin and Zelevinsky in 2001 motivated by their study of total positivity and canonical bases \cite{FZ}.  Since their introduction, deep connections to a variety of topics in mathematics and physics have been found and explored.  Among others, these include quiver representations, hyperbolic geometry, discrete dynamical systems, and string theory.  In this paper, we highlight the relationship between cluster algebras and string theory utilizing the concept of brane tilings \cite{brane_dimer}.  This correspondence yields discrete dynamical systems related to historical problems in enumerative combinatorics dating from the turn of the millenium \cite{Propp}, and also yields a new perspective on solutions to the hexahedron recurrence studied by Kenyon and Pemantle \cite{KP}.

To lay the foundation for describing our results, we start with the definition of quivers and cluster algebras.
A \textbf{quiver} $Q$ is a directed finite graph with a set of vertices $V$ and a set of edges $E$ connecting them whose direction is denoted by an arrow. For our purposes, $Q$ may have multiple edges connecting two vertices but may not contain any loops or $2-$cycles.

\begin{definition}[\textbf{Quiver Mutation}] Mutating at a vertex $i$ in $Q$ is denoted by $\mu_{i}$ and corresponds to the following actions on the quiver:
\begin{itemize}
\item For every 2-path through $i$ (e.g. $j \rightarrow i \rightarrow k$), add an edge from $j$ to $k$.
\item Reverse the directions of the arrows incident to $i$.
\item Delete any 2-cycles created from the previous two steps.
\end{itemize}
\end{definition}
\noindent
We define a {\bf cluster algebra} with initial seed $\{x_{1},x_{2},\ldots,x_{n}\}$ from the quiver $Q$ by associating a cluster variable $x_{i}$ to every vertex labeled $i$ in $Q$ where $|V| = n$.
When we mutate at a vertex $i$, the cluster variable at this vertex is updated and all other cluster variables remain unchanged \cite{FZ}. The action of $\mu_{i}$ leads to the following binomial exchange relation
\begin{equation*}
\label{eq: exchange relation}
x'_{i}x_{i} = \prod_{i \rightarrow j \; \mathrm{in} \; Q}x_{j}^{a_{i \rightarrow j}} + \prod_{j \rightarrow i \; \mathrm{in} \; Q}x_{j}^{b_{j \rightarrow i}}
\end{equation*}
where $x_i'$ is the new cluster variable at vertex $i$, $a_{i \rightarrow j}$ denotes the number of edges from $i$ to $j$, and $b_{j \rightarrow i}$ denotes the number of edges from $j$ to $i$.

The cluster algebra associated to $Q$ is generated by the union of all the cluster variables at each vertex allowing iterations of all finite sequences of mutations at every vertex.  One of the first results in the theory of cluster algebras was the {\bf Laurent Phenomenon} stating that every cluster variable is a Laurent polynomial, i.e. a rational function with a single monomial as a denominator \cite{FZ, laurent}.  In particular, if one starts with an initial cluster of $\{x_1,x_2,\dots, x_n\} = \{1,1,\dots, 1\}$ then all resulting cluster variables are integers (rather than rational numbers) despite the iterated division coming from the definition of cluster variable mutation.

In our previous work \cite{LaiMus}, we discussed a cluster algebra associated to the cone over $\mathbf{dP_3}$, the del Pezzo surface\footnote{All four toric phases of the $\mathbf{dP_3}$ quiver appeared for the first time in \cite[Section 6.3]{feng}, but this required brute force since it preceded the more efficient dimer model technology.}
of degree $6$ ($\mathbb{CP}^2$ blown up at three points) that was one of a number of cluster algebras arising from brane tilings (see \cite{brane_dimer} and the related work \cite{Hanany-Kennaway} and \cite{Quivering} on dimer models).  In particular, we investigated {\bf toric mutations} in such a cluster algebra, i.e. sequences of mutations exclusively at vertices with two incoming and two outgoing arrows.

The Laurent expansions of the corresponding toric cluster variables (i.e. those generators reachable via toric mutations) were given a combinatorial interpretation therein \cite[Theorem 5.9]{LaiMus} assuming that the initial seed was the quiver $Q_1$ as in Figure \ref{fig:quiv_brane} (Middle), see \cite[Figure 1]{brane_dimer} or \cite[Figure 22]{hanany_polygons}.  However, there are three other non-isomorphic seeds that are mutation-equivalent to $Q_1$ via toric mutations, see Figure \ref{fig:dP3QuiverModels}.  These four models are adjacent to each other as illustrated in Figure \ref{fig:ModelConnections} from \cite[Figure 27]{franco_eager}.

\begin{figure}\centering
\includegraphics[width=12cm]{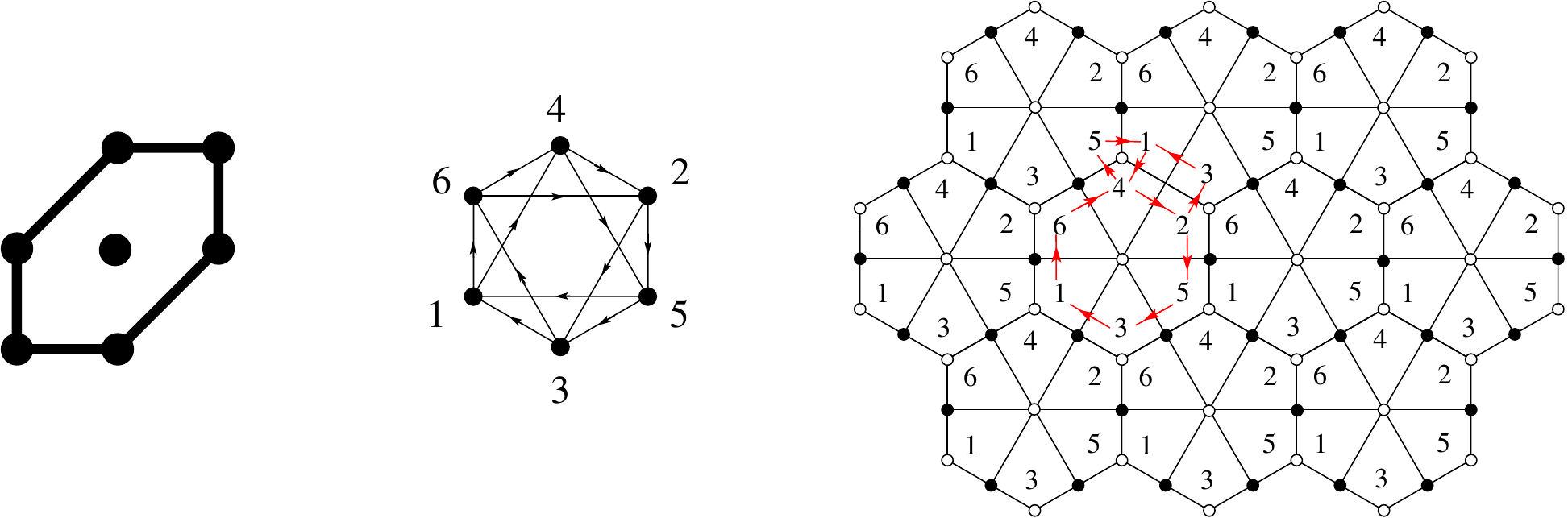}
 \caption{\small The $dP_3$ toric diagram, quiver $Q_1$, and its associated brane tiling $\mathcal{T}_1$. (Figure 1 of \cite{LaiMus}.)}
    \label{fig:quiv_brane}
\end{figure}

\begin{figure}
\includegraphics[width=5.8in]{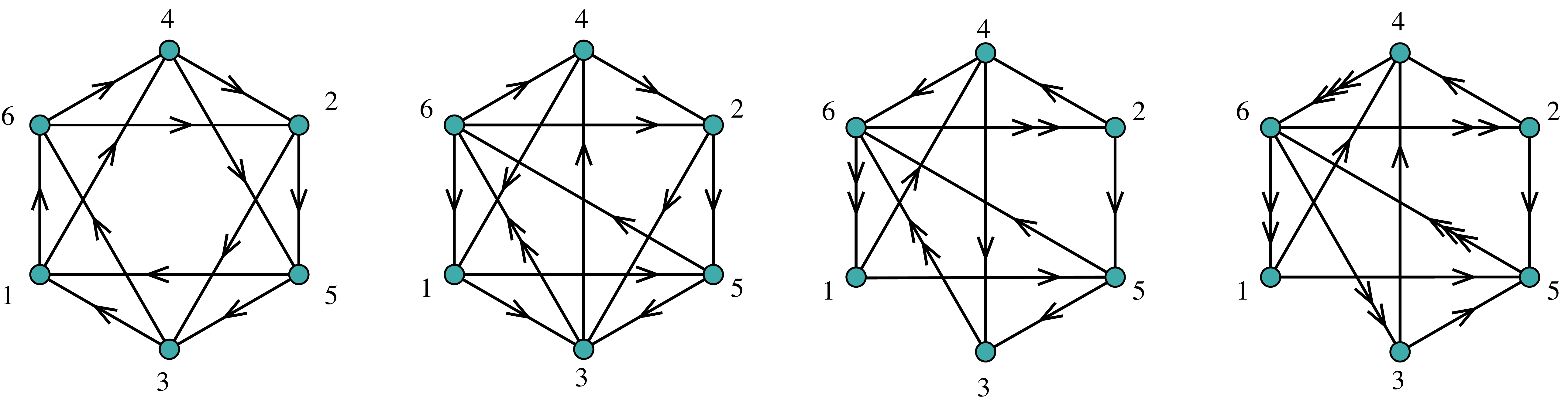}
\caption{Models 1, 2, 3, and 4 of the $dP_3$, i.e. quivers $Q_1$, $Q_2$, $Q_3$, and $Q_4$.  Two quivers are considered to be the same model if they are equivalent under (i) graph isomorphism and (ii) reversal of all edges.}
\label{fig:dP3QuiverModels}
\end{figure}

\begin{figure}\centering
\includegraphics[width=12cm]{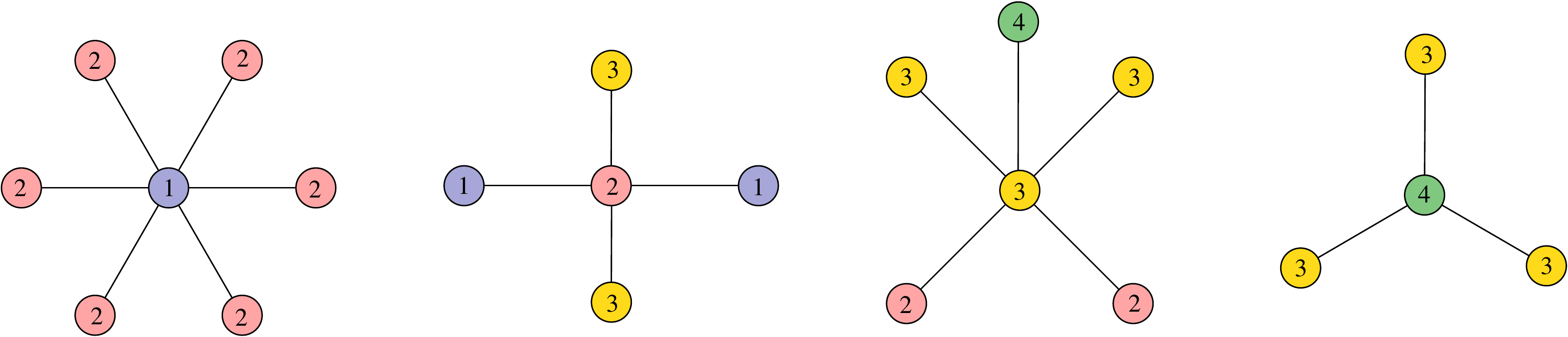}
\caption{Adjacencies between the different models. (Figure 27 of \cite{franco_eager}.)}
\label{fig:ModelConnections}
\end{figure}

In the current paper, our goal is to provide analogous combinatorial interpretations for Laurent expansions of toric cluster variables for three other possible initial seeds.  We refer to these three other initial seeds as Models 2, 3, and 4, and together with Model 1, they comprise the set of all quivers (up to graph isomorphism or reversal of all arrows) that are reachable from $Q_1$, i.e. Model 1, by toric mutations.   All four of these quivers are also associated to the cone over  $\mathbf{dP_3}$.

In addition to the data of a quiver, one can also associate a {\bf potential}, which is a linear combination of cycles of the quiver \cite{DWZ}.  Taken together, the data of a quiver and a potential yields a {\bf brane tiling}, i.e. a tesselation of a torus, for each of these four models.  For example, in \cite{LaiMus}, we studied the quiver $Q_1$ with the potential
\begin{eqnarray} \label{eq:pot} W_1 &=& A_{16}A_{64}A_{42}A_{25}A_{53}A_{31}
~+~ A_{14}A_{45}A_{51} ~+~ A_{23}A_{36}A_{62} \\ \nonumber &-&  A_{16}A_{62}A_{25}A_{51}
~-~ A_{36}A_{64}A_{45}A_{53} ~-~ A_{14}A_{42}A_{23}A_{31}.\end{eqnarray}
The pair $(Q_1,W_1)$ yields the brane tiling $\mathcal{T}_1$ as illustrated in Figure \ref{fig:quiv_brane} (Right) by unfolding the quiver in a periodic way that preserves the cycles arising in the potential and then taking the dual graph.  Following the rules laid out in \cite{brane_dimer} (which is a special case of the construction in \cite{DWZ}), the quiver with potential $(Q_1,W_1)$ may be mutated to yield not only mutation-equivalent quivers, but accompanying potentials.  In particular, Models 2, 3, and 4 correspond to the quivers with potentials $(Q_2,W_2)$, $(Q_3,W_3)$ and $(Q_4,W_4)$ by mutating at vertices $1$, $4$, then $3$ respectively, where
\begin{eqnarray} \label{eq:pot} W_2 &=& A_{36}^{(2)}A_{64}A_{42}A_{25}A_{53}
~+~ A_{23}A_{36}^{(1)}A_{62} + A_{34}A_{41}A_{13} + A_{56}A_{61}A_{15} \\ \nonumber &-&  A_{56}A_{62}A_{25}
~-~ A_{53}A_{36}^{(1)}A_{64}A_{41}A_{15} ~-~ A_{34}A_{42}A_{23}
 - A_{36}^{(2)}A_{61}A_{13}
.\end{eqnarray}

\begin{eqnarray} \label{eq:pot} W_3 &=& A_{36}^{(2)}A_{62}^{(2)}A_{25}A_{53}
~+~  A_{56}A_{61}^{(1)}A_{15} +
A_{24}A_{43}A_{36}^{(1)}A_{62}^{(1)} + A_{61}^{(2)}A_{14}A_{46}  \\ \nonumber &-&  A_{56}A_{62}^{(1)}A_{25}
~-~ A_{53}A_{36}^{(1)}A_{61}^{(2)}A_{15}
 - A_{36}^{(2)}A_{61}^{(1)}A_{14}A_{43} - A_{62}^{(2)}A_{24}A_{46}
  .\end{eqnarray}

{\scriptsize \begin{eqnarray} \label{eq:pot} W_4 &=& A_{56}^{(2)} A_{62}^{(2)}A_{25} + A_{46}^{(1)} A_{62}^{(1)} A_{24}
+ A_{56}^{(3)} A_{61}^{(1)} A_{15} + A_{61}^{(2)} A_{14}A_{46}^{(3)} + A_{46}^{(2)}A_{63}^{(2)} A_{34} +
A_{56}^{(1)} A_{63}^{(1)} A_{35}  \\ \nonumber &-&
A_{14}A_{46}^{(2)}A_{61}^{(1)} - A_{56}^{(3)}A_{62}^{(1)}A_{25} - A_{56}^{(1)}A_{61}^{(2)}A_{15} - A_{62}^{(2)}A_{24}A_{46}^{(3)}
- A_{46}^{(1)}A_{63}^{(1)} A_{34} - A_{56}^{(2)}A_{63}^{(2)}A_{35}
  .\end{eqnarray}}

Just as in the Model 1 case, each of these quivers with potential correspond to a brane tiling, see for example \cite[Figures 18, 23, and 24]{brane_dimer} or \cite[Figures 25, 26, 27]{hanany_polygons}.
Using Kasteleyn theory (see \cite{GK}, \cite{Hanany-Vegh}, or \cite{dimermodel}), one can associate a polygon (toric diagram) to each of these brane tilings.  For each of these four models, the associated toric diagram is the hexagon (up to $SL_2(\mathbb{Z})$-transformations) as in Figure \ref{fig:quiv_brane} (Left).  We illustrate these brane tilings alongside our combinatorial formulas in Section \ref{sec:comb}.

\vspace{1em}

Our main results are analogous to the main results of \cite{LaiMus} but starting from one of the other three models as an initial seed.  In particular, we obtain
(1) a similar parametrization of toric cluster variables by points in $\mathbb{Z}^3$, (2) a compact algebraic formula for all such toric cluster variables as Laurent polynomials, (3) a construction of a subgraph of the appropriate brane tiling for \emph{most}\footnote{Our construction is defined for those cases where the contour defined by lifting the point in $\mathbb{Z}^3$ to $\mathbb{Z}^6$ has no self-intersections}. toric cluster variables, (4) a proof that the partition function for perfect matchings of such subgraphs, with appropriate weightings, yields a combinatorial formula for the Laurent expansions of \emph{most} toric cluster variables, (5) a combinatorial interpretation using double-dimers for toric cluster variables associated to solutions of the hexahedron recurrence of \cite{KP}, and (6) a conjectured combinatorial interpretation using a mix of dimers and double-dimers for the remaining toric cluster variables not yet covered.

This paper is organized as follows.  In Section \ref{sec:param}, the parameterization and algebraic formula (for toric cluster variables) from \cite{LaiMus} is reviewed and extended to Models 2, 3, and 4.   We give our main results, the associated combinatorial formula for Models 2, 3, and 4 for (most) toric cluster variables in Section \ref{sec:comb}.  In Section \ref{sec:history}, we review the history of related combinatorial questions and how mutations of the $dP_3$ quiver allow us to package these disparate results all under one roof.  Section \ref{sec:proofs} provides the proofs while Sections  \ref{sec:Mod3} and \ref{sec:Mod4} provide the associated changes of coordinates 
relating our formulas to previous combinatorial results.  We revisit the hexahedron recurrence of Kenyon and Pemantle and provide our conjectured combinatorial interpretation in terms of dimers and double-dimers in Section \ref{sec:super}.  Finally, we discuss additional directions and open questions in 
Section \ref{sec:open}. 

\section{Parameterization by $\mathbb{Z}^3$ and Compact Algebraic Formulae}

\label{sec:param}

We begin by reviewing some results from Section 2 of \cite{LaiMus}.  In that paper, our starting quiver was $Q_1$, i.e. Model 1, and we parameterized the initial cluster $\{x_1,x_2,x_3,x_4,x_5,x_6\}$ by the prism
$$[(0,-1,1), (0,-1,0), (-1,0,0), (-1,0,1), (0,0,1),(0,0,0)] \in \mathbb{Z}^3.$$
Mutating by $\mu_1$, $\mu_4$, and $\mu_3$ (in that order) leads to quivers of Model 2, 3, and 4, respectively.  By comparing symmetries of $\mathbb{Z}^3$ to the relations induced by the action of mutations on clusters, e.g. $(\mu_1\mu_2)^2 = (\mu_1\mu_2\mu_3\mu_4)^3 = 1$, we were able to translate toric mutations into geometric transformations.
Consequently, sequences of toric mutations lead to new cluster seeds parameterized by different $6$-tuples in $\mathbb{Z}^3$.  The shape of each of those $6$-tuples corresponds to whether it associates to a quiver of Model 1, 2, 3, or 4.  See Figures \ref{fig:ModelII}, \ref{fig:ModelIII}, and \ref{fig:ModelIV}.  Accordingly, starting from either of these four models (up to graph isomorphism and reversal of all arrows, we assume our starting point is the quiver $Q_1$, $Q_2$, $Q_3$, or $Q_4$), all cluster variables that are reachable by a sequence of toric mutations are thus parameterized by $(i,j,k)\in \mathbb{Z}^3$.

\begin{figure}
\includegraphics[width=5.5in]{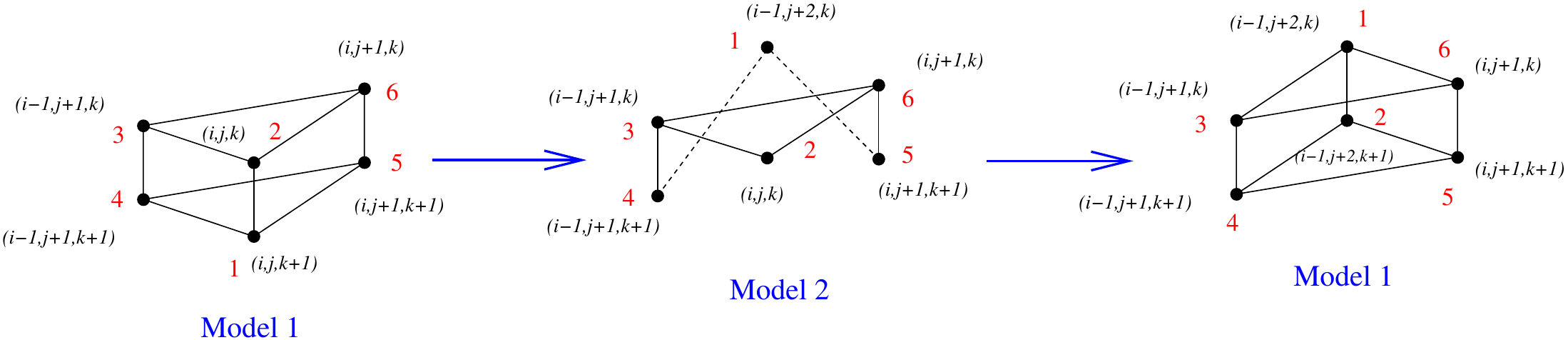}
\caption{Illustrating the transformations induced by $\mu_1$ and $\mu_2$. (Figure 8 of \cite{LaiMus}.)}
\label{fig:ModelII}
\end{figure}

\begin{figure}
\includegraphics[width=6in]{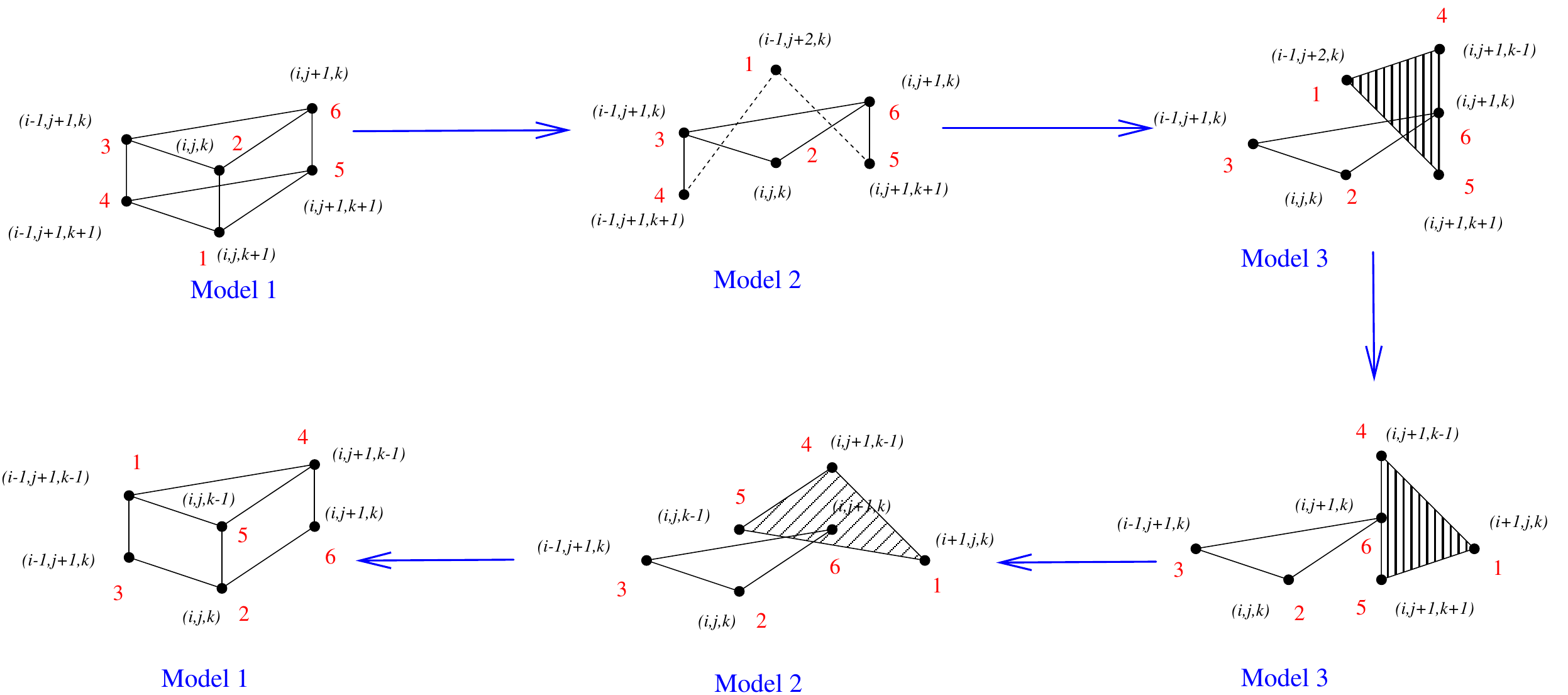}
\caption{Illustrating the $\mathbb{Z}^3$-transformations induced by $\mu_1\mu_4\mu_1\mu_5\mu_1$. (Based on Figure 9 of \cite{LaiMus}.)}
\label{fig:ModelIII}
\end{figure}

\begin{figure}
\includegraphics[width=4.5in]{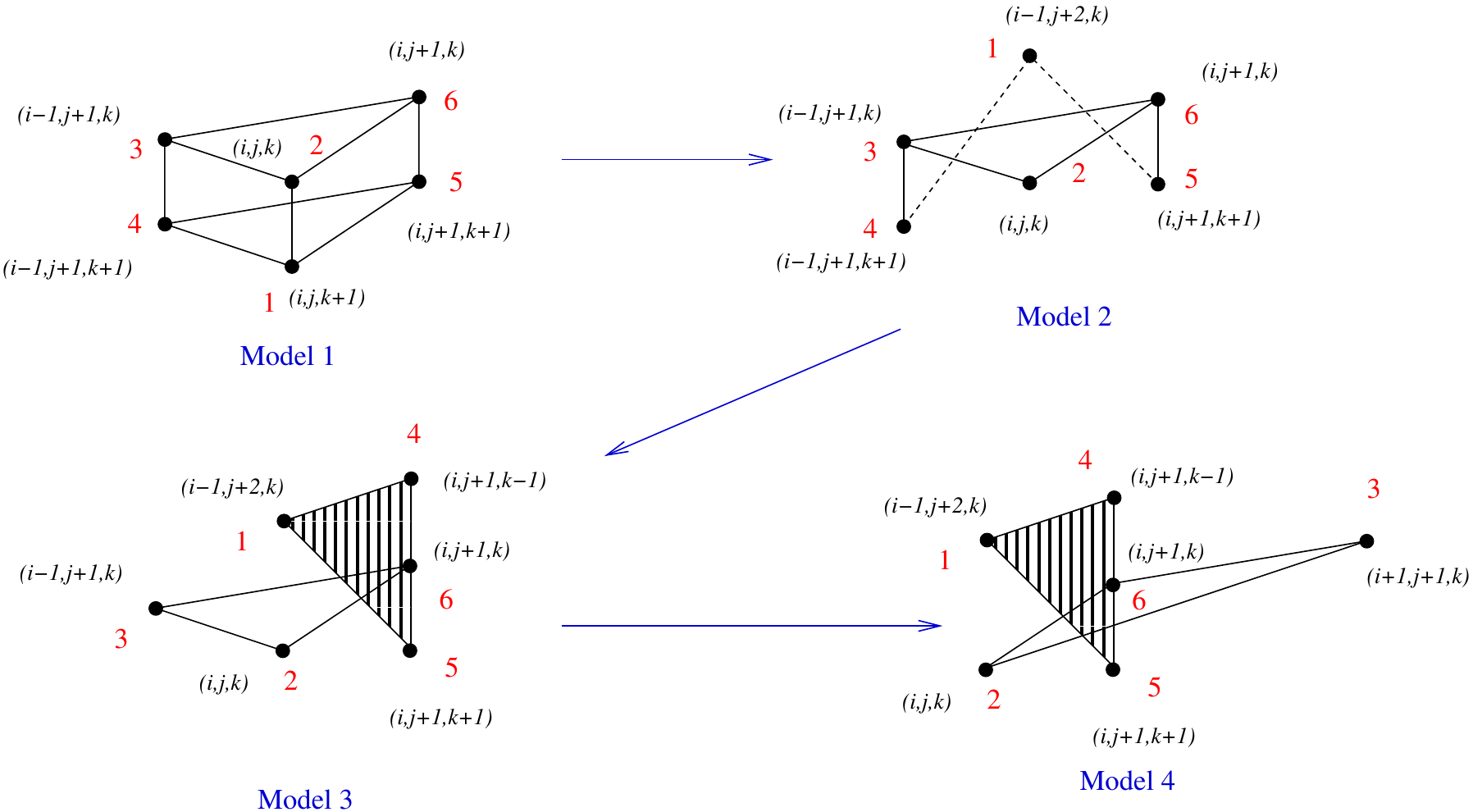}
\caption{Illustrating the toric mutation sequence $\mu_1\mu_4\mu_3$. (Figure 10 of \cite{LaiMus}.)}
\label{fig:ModelIV}
\end{figure}

Under this parametrization, we let the initial cluster correspond to $$\Delta_1 = [(0,-1,1), (0,-1,0), (-1,0,0), (-1,0,1), (0,0,1),(0,0,0)] \in \mathbb{Z}^3$$ when starting with the quiver $Q_1$.  Mutating the quiver $Q_1$ by $\mu_1$ leads to $Q_2$ and the seed corresponding to
$$\Delta_2 = [(-1,1,0), (0,-1,0), (-1,0,0), (-1,0,1), (0,0,1),(0,0,0)].$$
Following up from that, mutating by $\mu_4$ yields $Q_3$ and the seed corresponding to
$$\Delta_3 = [(-1,1,0), (0,-1,0), (-1,0,0), (0,0,-1), (0,0,1),(0,0,0)].$$
Lastly, mutating by $\mu_3$ yields $Q_4$ and the seed corresponding to
$$\Delta_4 = [(-1,1,0), (0,-1,0), (1,0,0), (0,0,-1), (0,0,1),(0,0,0)].$$
We thus get four different families of Laurent expansions depending on whether we associate the initial cluster $\{x_1,x_2,x_3,x_4,x_5,x_6\}$ to
$\Delta_1, \Delta_2, \Delta_3$, or $\Delta_4$.

\subsection{Formula in the Model 1 Case}

Letting $(i,j,k) \in \mathbb{Z}^3$, we let $z_{i,j,k}^{(1)}$ denote the cluster variable (reachable by
a toric mutation sequence) starting from the initial seed with the quiver $Q_1$.

\begin{theorem} [Theorem 3.1 of \cite{LaiMus}] \label{thm:z1}
Let $A = \frac{x_3x_5 + x_4x_6}{x_1x_2}$,
$B = \frac{x_1x_6 + x_2x_5}{x_3x_4}$, \\
$C = \frac{x_1x_3 + x_2x_4}{x_5x_6}$,
$D = \frac{x_1x_3x_6 + x_2x_3x_5 + x_2x_4x_6}{x_1x_4x_5}$,
$E = \frac{x_2x_4x_5 + x_1x_3x_5 + x_1x_4x_6}{x_2x_3x_6}$.

$$z_{i,j,k}^{(1)} = x_r A^{\lfloor \frac{(i^2+ij+j^2+1)+i+2j}{3}\rfloor}
B^{\lfloor \frac{(i^2+ij+j^2+1)+2i+j}{3}\rfloor}
C^{\lfloor \frac{i^2+ij+j^2+1}{3}\rfloor}
D^{\lfloor \frac{(k-1)^2}{4}\rfloor}
E^{\lfloor \frac{k^2}{4}\rfloor}$$ where
$r = 1$ if $2(i-j) + 3k \equiv 5$, ~ $r = 2$ if $2(i-j) + 3k \equiv 2$, ~ $r = 3$ if $2(i-j) + 3k \equiv 4$,

$r = 4$ if $2(i-j) + 3k \equiv 1$, ~ $r = 5$ if $2(i-j) + 3k \equiv 3$, ~ $r = 6$ if $2(i-j) + 3k \equiv 0$ working modulo $6$.  In particular, the variable $x_r$ is uniquely determined by the values of $(i-j)$ modulo $3$ and $k$ modulo $2$.
\end{theorem}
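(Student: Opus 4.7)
The plan is to induct on the minimal length of a toric mutation sequence taking the initial seed $\Delta_1$ to the seed containing $z^{(1)}_{i,j,k}$, using the $\mathbb{Z}^3$-parametrization reviewed in this section. For the base case, one substitutes the six points of $\Delta_1$ into the formula: in each case $i^2 + ij + j^2 \le 1$ and $k \in \{0,1\}$, so every floor exponent vanishes, while $2(i-j) + 3k \bmod 6$ picks out exactly the correct index $r$ (for example, $(0,-1,1)$ gives $5$ so $r=1$, and $(0,0,0)$ gives $0$ so $r=6$). Hence the formula evaluates to $x_r$ on the initial cluster.

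For the inductive step, fix a toric mutation that, according to the $\mathbb{Z}^3$-action described above, replaces a label $p = (i,j,k)$ by $p' = (i',j',k')$. The binomial exchange relation takes the form
$$z^{(1)}_{p'} \cdot z^{(1)}_{p} \; = \; M_{\mathrm{out}} \; + \; M_{\mathrm{in}},$$
where $M_{\mathrm{out}}$ and $M_{\mathrm{in}}$ are monomials in the other five cluster variables of the current seed, each of which is known by induction as an explicit expression in $x_1,\ldots,x_6$ and powers of $A,B,C,D,E$. Substituting the formula and using the defining identities $A\, x_1 x_2 = x_3 x_5 + x_4 x_6$, $B\, x_3 x_4 = x_1 x_6 + x_2 x_5$, $C\, x_5 x_6 = x_1 x_3 + x_2 x_4$, and the analogous two for $D$ and $E$, one must verify that the right-hand side factors as $z^{(1)}_{p}$ times the claimed formula for $z^{(1)}_{p'}$.

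The main obstacle is the exponent book-keeping. Two essentially different kinds of moves must be analyzed: the hexagonal mutations (such as those induced by $\mu_1\mu_2$) act on $(i,j)$ by affine involutions that alter the ternary quadratic form $i^2 + ij + j^2$ in a controlled way, producing small, piecewise-constant shifts in the $A,B,C$ exponents; the vertical mutations (such as $\mu_1\mu_4\mu_3$) affect the $D,E$ exponents via $\lfloor (k-1)^2/4 \rfloor$ and $\lfloor k^2/4 \rfloor$. In each case one writes the differences of the floor functions as explicit integer-valued functions of $p$ and $p'$, verifies that they match the exponents contributed by the exchange relation, and confirms via the mod-$6$ arithmetic that the $x_r$ factor shifts correctly. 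Because the confluences $(\mu_1\mu_2)^2 = (\mu_1\mu_2\mu_3\mu_4)^3 = 1$ identify different paths reaching the same $(i,j,k)$, it suffices to check the inductive step along one generating family of mutations, reducing the proof to a finite and symmetric case analysis.
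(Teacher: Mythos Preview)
The present paper does not prove this theorem; it is quoted as Theorem~3.1 of \cite{LaiMus} and then used as a black box to derive Theorems~\ref{thm:z2}--\ref{thm:z4} via the substitutions $x_1\mapsto Y_1$, $x_4\mapsto Y_4'$, etc. So there is no in-paper proof here to compare your attempt against.

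That said, your strategy---induct along toric mutation sequences and verify that the closed form satisfies each binomial exchange relation---is the natural one, and your base-case check is correct. The sketch, however, elides the main difficulty. A single toric mutation from a Model~1 seed lands in Model~2, and further mutations pass through Models~3 and~4 before any return to Model~1; the quiver, and hence the monomials $M_{\mathrm{out}}$ and $M_{\mathrm{in}}$, change at every step. Your references to ``$\mu_1\mu_2$'' and ``$\mu_1\mu_4\mu_3$'' are composites rather than single mutations, so to make the induction precise you must either (i) track all four models simultaneously, or (ii) isolate a finite generating set of Model-1-to-Model-1 composite moves and verify the claimed identity for each. Either route is a genuine case analysis with nontrivial floor-function arithmetic, and none of it is actually carried out in your proposal. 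Finally, the relations $(\mu_1\mu_2)^2=(\mu_1\mu_2\mu_3\mu_4)^3=1$ ensure that the cluster variable $z^{(1)}_{i,j,k}$ is well-defined as a function of $(i,j,k)$; that is a statement about the cluster algebra itself, not about your candidate closed form, so it does not by itself reduce the verification to ``one generating family'' without further argument.
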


\begin{remark} \label{rem:z1}
This nontrivial correspondence between the values of $r$ and $2(i-j)+3k \mod 6$ comes from our cyclic ordering
of $Q_1$ in counter-clockwise order given in Figure \ref{fig:quiv_brane}.  In particular, as we rotate from vertex $r$ to $r'$ in clockwise order, the corresponding value of $2(i-j)+3k \mod 6$ increases by $1$ (circularly).
\end{remark}

\subsection{Formulae for Models 2, 3, and 4}

By mutating quiver $Q_1$ by vertices $1$, $4$, and $3$ in order, we obtain variants of Theorem 3.1 of \cite{LaiMus}.  We let $z_{i,j,k}^{(2)}$, $z_{i,j,k}^{(3)}$, and $z_{i,j,k}^{(4)}$ denote the cluster variable parameterized by $(i,j,k)$ starting from the quivers $Q_2$, $Q_3$, or $Q_4$ respectively.  In the following, we use $Y_r$, $Y_r'$ and $Y_r''$, respectively in place of $x_r$ but with the same nontrivial cyclic ordering as indicated in Theorem \ref{thm:z1} and Remark \ref{rem:z1}.

\begin{theorem} \label{thm:z2}
Let $A = \frac{x_1}{x_2}$,
$B = \frac{x_4x_6^2 + x_1x_2x_5 + x_3x_4x_5}{x_1x_3x_4}$,
$C = \frac{x_1x_2x_4 + x_3x_4x_6 + x_3^2x_5}{x_1x_5x_6}$, \\
$D = \frac{x_1x_2 + x_3x_6}{x_4x_5}$,
$E = \frac{x_4^2x_6^2 + x_1x_2x_4x_5 + 2x_3x_4x_5x_6  + x_3^2x_5^2}{x_1x_2x_3x_6}$.

$$z_{i,j,k}^{(2)} = Y_r A^{\lfloor \frac{(i^2+ij+j^2+1)+i+2j}{3}\rfloor}
B^{\lfloor \frac{(i^2+ij+j^2+1)+2i+j}{3}\rfloor}
C^{\lfloor \frac{i^2+ij+j^2+1}{3}\rfloor}
D^{\lfloor \frac{(k-1)^2}{4}\rfloor}
E^{\lfloor \frac{k^2}{4}\rfloor}$$ where
$Y_1 = \frac{x_4x_6+x_3x_5}{x_1}$, $Y_j = x_j$ for $2 \leq j \leq 6$.
\end{theorem}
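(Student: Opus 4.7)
My plan is to prove Theorem \ref{thm:z2} by mirroring the proof of Theorem \ref{thm:z1} from \cite{LaiMus}, now running the induction on toric mutation sequences anchored at $Q_2$ instead of $Q_1$. The shape of the formula is structurally identical to the Model 1 case, with the same floor-function exponents $\lfloor (i^2+ij+j^2+1+i+2j)/3 \rfloor$, etc.; only the auxiliary Laurent polynomials $A, B, C, D, E$ and the leading variables $Y_r$ are replaced by their Model 2 analogues.

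I would begin by verifying the base case: that each of the six positions in $\Delta_2$ recovers the correct initial cluster variable. For positions $2$ through $6$, the exponents on $A, B, C, D, E$ all vanish, the cyclic rule $2(i-j)+3k \pmod 6$ picks out $r=2,3,4,5,6$ in turn, and the formula reduces to $Y_r = x_r$. For position $1$, namely $(-1,1,0)$, a short check gives $A$-exponent $1$, all others $0$, and $r=2$, so the formula yields $Y_2 \cdot A = x_2 \cdot (x_1/x_2) = x_1$.

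For the inductive step, I would exploit the mutation relation $Q_2 = \mu_1(Q_1)$. This identifies Model 2's cluster algebra with Model 1's via $\hat{x}_1 = (x_3 x_5 + x_4 x_6)/x_1$ and $\hat{x}_j = x_j$ for $j \ge 2$. Any toric cluster variable obtainable from $\Delta_2$ can then be re-expressed as $z^{(1)}_{i',j',k'}(\hat{x})$ for a corresponding point $(i',j',k')$ determined by the geometric transformation of Figure \ref{fig:ModelII}; Theorem \ref{thm:z1} then gives an expansion in $\hat{x}$, and substituting $\hat{x}_1 = (x_3 x_5 + x_4 x_6)/x_1$ should recover the claimed Model 2 expression. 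The alternative, which I would pursue if the translation becomes unwieldy, is a self-contained induction on the mutation sequence length in $Q_2$, verifying that each binomial exchange relation at a toric vertex produces the predicted change in the formula's exponents.

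The main obstacle is that the substitution $\hat{x}_1 \mapsto (x_3 x_5 + x_4 x_6)/x_1$ does not take $A_1, B_1, C_1, D_1, E_1$ individually to $A_2, B_2, C_2, D_2, E_2$: the auxiliary polynomials in the two models are different reorganizations of the same cluster data, with the discrepancy absorbed by the reparameterization of $\mathbb{Z}^3$. Handling this cleanly requires either careful bookkeeping of how the floor-function exponents shift under the transformation induced by $\mu_1$, reducing the task to a short list of polynomial identities such as $Y_1 \cdot x_1 = x_3 x_5 + x_4 x_6$, or bypassing Model 1 entirely. I would prefer the latter, since it is self-contained, cleanly respects the intrinsic cyclic symmetries of $Q_2$, and adapts immediately to Models 3 and 4 by analogous arguments anchored at $Q_3$ and $Q_4$.
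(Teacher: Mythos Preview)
Your first approach is exactly the paper's proof, but you have talked yourself out of it on the basis of two misconceptions. First, there is no reparameterization of $\mathbb{Z}^3$: the point $(i,j,k)$ labels a toric cluster variable, not a mutation sequence from a fixed seed, so $z_{i,j,k}^{(1)}$ and $z_{i,j,k}^{(2)}$ are the \emph{same} element of the cluster algebra, merely expanded in different initial clusters. Starting from $Q_2$ with initial cluster $\{x_1,\dots,x_6\}$ sitting at $\Delta_2$ and applying $\mu_1$ produces $Q_1$ with cluster $\{Y_1,x_2,\dots,x_6\}$ sitting at $\Delta_1$; hence $z_{i,j,k}^{(2)}$ is literally the formula of Theorem~\ref{thm:z1} with $x_1$ replaced by $Y_1$, and $(i',j',k')=(i,j,k)$.

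Second, the substitution $x_1\mapsto Y_1=(x_3x_5+x_4x_6)/x_1$ \emph{does} send each of $A,B,C,D,E$ from Theorem~\ref{thm:z1} individually to its counterpart in Theorem~\ref{thm:z2}. For instance $A_1=\frac{x_3x_5+x_4x_6}{x_1x_2}\mapsto\frac{x_3x_5+x_4x_6}{Y_1x_2}=\frac{x_1}{x_2}=A_2$, and for $D_1=\frac{x_1x_3x_6+x_2x_3x_5+x_2x_4x_6}{x_1x_4x_5}$ the numerator becomes $(x_1x_2+x_3x_6)(x_3x_5+x_4x_6)/x_1$ after substitution, so the factor $x_3x_5+x_4x_6$ cancels against the one in the denominator to give $D_2$. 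The remaining three are equally direct. So the entire proof collapses to five rational-function identities, with no need to track how the floor exponents shift or to run a separate induction on mutation length. Your preferred self-contained induction would work, but it reproves Theorem~\ref{thm:z1} from scratch rather than invoking it.
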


\begin{theorem} \label{thm:z3} Let $A = \frac{x_1}{x_2}$,
$B = \frac{x_4x_5 + x_6^2}{x_1x_3}$,
$C = \frac{x_1^2x_2^2  + 2 x_1x_2x_3x_6 + x_3^2x_6^2 + x_3^2x_4x_5}{x_1x_4x_5x_6}$,
$D = \frac{x_4}{x_5}$,
$E = \frac{x_1^2x_2^2x_6^2 + 2x_1x_2x_3x_6^3 + x_3^2x_6^4 + x_1^2x_2^2x_4x_5 + 3x_1x_2x_3x_4x_5x_6 + 2x_3^2x_4x_5x_6^2 + x_3^2x_4^2x_5^2}{x_1x_2x_3x_4^2x_6}$.

$$z_{i,j,k}^{(3)} = Y_r' A^{\lfloor \frac{(i^2+ij+j^2+1)+i+2j}{3}\rfloor}
B^{\lfloor \frac{(i^2+ij+j^2+1)+2i+j}{3}\rfloor}
C^{\lfloor \frac{i^2+ij+j^2+1}{3}\rfloor}
D^{\lfloor \frac{(k-1)^2}{4}\rfloor}
E^{\lfloor \frac{k^2}{4}\rfloor}$$ where
$Y_1' = \frac{x_1x_2x_6+x_3x_6^2 + x_3x_4x_5}{x_1x_4}$, $Y_4' = \frac{x_1x_2+x_3x_6}{x_4}$,   $Y_j' = x_j$ for $j \in \{2,3,5,6\}$.
\end{theorem}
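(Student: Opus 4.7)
The plan is to deduce Theorem \ref{thm:z3} from Theorem \ref{thm:z1} by an explicit change of variables. Because $Q_3$ is obtained from $Q_1$ by the toric mutations $\mu_1, \mu_4$, the two quivers generate the same cluster algebra, and by the setup of Section \ref{sec:param} every toric cluster variable receives the same $(i,j,k)$-label from either initial seed. Writing $(\xi_1,\dots,\xi_6)$ for the initial cluster of $Q_1$, the Laurent polynomial $z^{(3)}_{i,j,k}(x_1,\dots,x_6)$ is therefore obtained from $z^{(1)}_{i,j,k}(\xi_1,\dots,\xi_6)$ by substituting the $\xi$'s in terms of the $x$'s.

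To build this substitution I would invert the two mutations. The first step, $\mu_1$ in $Q_1$, produces $\eta_1 = (\xi_3\xi_5 + \xi_4\xi_6)/\xi_1$ and otherwise fixes the cluster; the second, $\mu_4$ in $Q_2$, yields $x_4 = (\eta_1\eta_2 + \eta_3\eta_6)/\eta_4$ with $x_i = \eta_i$ for $i \neq 4$. Eliminating the $\eta$'s and solving the resulting linear equation in $\xi_1$ gives
\[
\xi_1 \;=\; \frac{x_1x_2x_6 + x_3x_6^2 + x_3x_4x_5}{x_1x_4} \;=\; Y_1', \qquad
\xi_4 \;=\; \frac{x_1x_2 + x_3x_6}{x_4} \;=\; Y_4',
\]
together with $\xi_j = x_j$ for $j \in \{2,3,5,6\}$. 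These match exactly the $Y_1', Y_4'$ appearing in the theorem, so the leading factor $x_r$ of Theorem \ref{thm:z1} becomes precisely $Y_r'$ after substitution.

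It then remains to verify that under this substitution each of $A, B, C, D, E$ from Theorem \ref{thm:z1} simplifies to the corresponding expression in Theorem \ref{thm:z3}; the exponents depend only on $(i,j,k)$ and are unaffected. For instance $A_1 = (\xi_3\xi_5 + \xi_4\xi_6)/(\xi_1\xi_2)$ collapses to $x_1/x_2$ because the numerator $x_3x_5 + Y_4' x_6$ factors so that $x_1x_2x_6 + x_3x_6^2 + x_3x_4x_5$ cancels against the corresponding factor of $Y_1'$. Similarly, $D_1$ simplifies to $x_4/x_5$ once the numerator $\xi_1\xi_3\xi_6 + \xi_2\xi_3\xi_5 + \xi_2\xi_4\xi_6$ is recognized as $(x_1x_2 + x_3x_6)(x_1x_2x_6 + x_3x_6^2 + x_3x_4x_5)/(x_1x_4)$. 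The analogous checks for $B$ and $C$ follow the same pattern and use the useful identity $(x_1x_2 + x_3x_6)^2 = x_1^2x_2^2 + 2x_1x_2x_3x_6 + x_3^2x_6^2$.

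The main obstacle will be the simplification of $E$: after substitution its numerator is a sum of three rational terms that must be cleared over the common denominator $x_1x_2x_3x_4^2x_6$ and matched against the claimed degree-four polynomial with seven monomials. This step is purely algebraic but requires careful bookkeeping; it again exploits the factorization $(x_1x_2 + x_3x_6)^2$ together with the recurring factor $x_1x_2x_6 + x_3x_6^2 + x_3x_4x_5$. A minor preliminary check, read off from Figures \ref{fig:ModelII} and \ref{fig:ModelIII}, is that the $\mathbb{Z}^3$-labels of the entries of $\Delta_3$ are precisely the images of the entries of $\Delta_1$ under $\mu_1\mu_4$, which is what legitimizes the identification $z^{(1)}_{i,j,k} = z^{(3)}_{i,j,k}$ used at the outset.
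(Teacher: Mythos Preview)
Your proposal is correct and follows essentially the same approach as the paper: start from $Q_3$, pass to $Q_1$ via the two mutations (the paper phrases this as mutating $\mu_4$ then $\mu_1$ from $Q_3$, which is the inverse of your $\mu_1$ then $\mu_4$ from $Q_1$, but since mutation is an involution this is the same substitution), obtain $\xi_1=Y_1'$, $\xi_4=Y_4'$, $\xi_j=x_j$ otherwise, and plug into Theorem~\ref{thm:z1}. The paper's own proof is actually terser than yours --- it simply asserts the substitution and leaves the simplification of $A,B,C,D,E$ implicit --- so your extra verification of the $A$, $D$, and $E$ reductions is more than what appears there.
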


\begin{theorem} \label{thm:z4} Let $A = \frac{x_1}{x_2}$,
$B = \frac{x_3}{x_1}$,
$C = \frac{x_6^6 + 2x_1x_2x_3x_6^3 + x_1^2x_2^2x_3^2 + 3x_4x_5x_6^4 + 2x_1x_2x_3x_4x_5x_6 + 3x_4^2x_5^2x_6^2 + x_4^3x_5^3}{x_1x_3^2x_4x_5x_6}$,
$D = \frac{x_4}{x_5}$,
$E = \frac{x_6^6 + 2x_1x_2x_3x_6^3 + x_1^2x_2^2x_3^2 + 3x_4x_5x_6^4 + 3x_1x_2x_3x_4x_5x_6 + 3x_4^2x_5^2x_6^2 + x_4^3x_5^3}{x_1x_2x_3x_4^2x_6}$.

$$z_{i,j,k}^{(4)} = Y_r'' A^{\lfloor \frac{(i^2+ij+j^2+1)+i+2j}{3}\rfloor}
B^{\lfloor \frac{(i^2+ij+j^2+1)+2i+j}{3}\rfloor}
C^{\lfloor \frac{i^2+ij+j^2+1}{3}\rfloor}
D^{\lfloor \frac{(k-1)^2}{4}\rfloor}
E^{\lfloor \frac{k^2}{4}\rfloor}$$ where
$Y_1'' = \frac{x_6^4 + x_1x_2x_3x_6 + 2x_4x_5x_6^2 + x_4^2x_5^2}{x_1x_3x_4}$, $Y_3'' = \frac{x_4x_5 + x_6^2}{x_3}$,
$Y_4'' = \frac{x_1x_2x_3 + x_4x_5x_6 + x_6^3}{x_3x_4}$, $Y_j'' = x_j$ for $j \in \{2,5,6\}$.
\end{theorem}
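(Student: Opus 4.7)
The proof mirrors the strategy used for Theorem \ref{thm:z1} in \cite{LaiMus}: a direct induction on the $\mathbb{Z}^3$ parameter $(i,j,k)$ combined with the exchange relations of toric mutations of $Q_4$. The parameterization established in Section \ref{sec:param} and illustrated in Figure \ref{fig:ModelIV} encodes each toric mutation of $Q_4$ as an explicit $\mathbb{Z}^3$-transformation (reflection or translation), and the plan is to show that the closed-form expression in Theorem \ref{thm:z4} both agrees with the initial seed $\Delta_4$ and is consistent with every such exchange relation.

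First I would verify the base case: for each of the six vectors in $\Delta_4$, the five floor-function exponents of $A,B,C,D,E$ vanish, and $2(i-j)+3k \pmod{6}$ correctly selects the initial variable $x_r$ via the same cyclic convention as in Remark \ref{rem:z1}. Next I would identify the compound quantities: $A$, $B$, $C$, $D$, $E$ arise as specific cluster variables (or ratios thereof) at designated lattice points reachable by one or two toric mutations from $Q_4$, while the non-monomial values $Y_1''$, $Y_3''$, $Y_4''$ are read off as the results of the single mutations $\mu_1$, $\mu_3$, $\mu_4$ applied to the initial seed of $Q_4$. Their explicit Laurent polynomial forms follow from the exchange relation at each vertex, where the monomials in the numerator correspond to the cycles of the potential $W_4$ passing through that vertex.

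For the inductive step, given $(i,j,k)$ one selects a toric mutation of $Q_4$ relating $z_{i,j,k}^{(4)}$ to three cluster variables at lattice points closer to the initial seed, and verifies the associated binomial exchange relation. As in the Model 1 case, the arithmetic input is that the differences of the floor functions $\lfloor (i^2+ij+j^2+1+\alpha i + \beta j)/3\rfloor$ and $\lfloor (k\pm 1)^2/4\rfloor$ change by controlled amounts under the generating $\mathbb{Z}^3$-moves, reducing each exchange relation to a single polynomial identity among $A,B,C,D,E,Y_r''$. Combined with the base case, the induction propagates the formula to all of $\mathbb{Z}^3$.

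The main obstacle is that, unlike in Model 1 where $Y_r = x_r$ are monomials, the Model 4 quantities $Y_1''$, $Y_3''$, $Y_4''$ together with $C$ and $E$ are Laurent polynomials of total degree up to six in $\{x_1,\ldots,x_6\}$, so the required polynomial identities are considerably heavier than those in \cite[Theorem 3.1]{LaiMus}. A cleaner alternative that I would pursue in parallel is to pull back to Model 1: since $Q_4 = \mu_3\mu_4\mu_1(Q_1)$, one can write each initial cluster variable of $Q_4$ as a known Laurent polynomial in the Model 1 variables, apply Theorem \ref{thm:z1} after composing the $\mathbb{Z}^3$-transformations of Figure \ref{fig:ModelIV}, and then simplify. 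This substitution route trades the direct polynomial verifications for careful bookkeeping of how the lattice parameterizations align, and it provides a useful independent check against the direct inductive argument above.
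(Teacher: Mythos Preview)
Your primary plan—a direct induction on $(i,j,k)$ via exchange relations—is a different and far more laborious route than the one the paper takes. The paper's proof is a one-step substitution: starting at $Q_4$ with initial cluster $\{x_1,\dots,x_6\}$, mutate by $\mu_3$, then $\mu_4$, then $\mu_1$ to land at $Q_1$ with cluster $\{Y_1'', x_2, Y_3'', Y_4'', x_5, x_6\}$. Since the $\mathbb{Z}^3$-parametrization of toric cluster variables is model-independent, $z_{i,j,k}^{(4)}$ is obtained from the formula for $z_{i,j,k}^{(1)}$ in Theorem~\ref{thm:z1} by replacing $x_1, x_3, x_4$ with $Y_1'', Y_3'', Y_4''$ and simplifying the resulting $A,B,C,D,E$. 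No new induction is performed, and the heavy polynomial identities you anticipate never arise. Your ``cleaner alternative'' is essentially this, but you have the direction of substitution reversed (one expresses the Model~1 initial variables in terms of the Model~4 ones, not vice versa), and there is no composing of $\mathbb{Z}^3$-transformations to do: the lattice point $(i,j,k)$ names the same cluster variable in every model.

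Two factual slips in your main plan are worth flagging. First, $Y_1'', Y_3'', Y_4''$ are \emph{not} the single mutations $\mu_1,\mu_3,\mu_4$ applied to $Q_4$. Vertex~$4$ is not toric in $Q_4$ (it has in- and out-degree~$3$), and a single toric mutation $\mu_1$ on $Q_4$ yields $(x_6^2+x_4x_5)/x_1 \neq Y_1''$; the correct reading is that $Y_1'',Y_3'',Y_4''$ are the values at positions $1,3,4$ \emph{after} the composite $\mu_1\mu_4\mu_3$ reaches $Q_1$. Second, your base-case claim that the five exponents vanish on $\Delta_4$ is false: at $(-1,1,0)$ the $A$-exponent is $1$, and at $(1,0,0)$ both the $A$- and $B$-exponents are $1$; the formula still returns $x_1$ and $x_3$ because $Y_2''\cdot A = x_1$ and $Y_2''\cdot A\cdot B = x_3$, so the check is by cancellation, not vanishing. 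These slips do not make a direct induction impossible, but since only vertices $1,2,3$ are toric in $Q_4$, your inductive step would in any case have to track exchange relations across all four models—at which point the paper's substitution argument is strictly simpler.
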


The proof of these three theorems follow from Theorem 3.1 of \cite{LaiMus} followed by algebraic transformations corresponding to cluster mutations.

\begin{proof}
Beginning with the quiver $Q_2$ and the initial cluster $\{x_1,x_2,x_3,x_4,x_5,x_6\}$, if we mutate at vertex $1$, then we get $Q_1$ with the new cluster $\{Y_1,x_2,x_3,x_4,x_5,x_6\}$ where $Y_1 = \frac{x_4x_6+x_3x_5}{x_1}$.  Hence, we obtain the formula for $z_{i,j,k}^{(2)}$ by taking the formula for $z_{i,j,k}^{(1)}$ and substituting in $x_1$ by $Y_1$.  Such a substitution also alters the values of $A$, $B$, $C$, $D$, and $E$ accordingly.  This proves Theorem \ref{thm:z2}.
Similarly, if we start with Model 3 quiver $Q_3$ as well as the initial cluster $\{x_1,x_2,x_3,x_4,x_5,x_6\}$, then we mutate by $\mu_4$ then $\mu_1$ to get to Model $2$ and then to Model $1$.  That yields quiver $Q_1$ with cluster $\{Y_1',x_2,x_3,Y_4',x_5,x_6\}$ where
$Y_1' = \frac{x_1x_2x_6+x_3x_6^2 + x_3x_4x_5}{x_1x_4}$ and $Y_4' = \frac{x_1x_2+x_3x_6}{x_4}$.  Again, we substitute in $Y_1'$ for $x_1$ and $Y_4'$ for $x_4$ hence obtaining the formula of Theorem \ref{thm:z3}.
Finally starting with the Model 4 quiver $Q_4$ and the initial cluster $\{x_1,x_2,x_3,x_4,x_5,x_6\}$, we mutate by $\mu_3$, $\mu_4$, and $\mu_1$ in that order.  This mutation sequence yields quiver $Q_1$ with cluster
 $\{Y_1'', x_2, Y_3'', Y_4'', x_5,x_6\}$ where $Y_1'' = \frac{x_6^4 + x_1x_2x_3x_6 + 2x_4x_5x_6^2 + x_4^2x_5^2}{x_1x_3x_4}$, $Y_3'' = \frac{x_4x_5 + x_6^2}{x_3}$,
$Y_4'' = \frac{x_1x_2x_3 + x_4x_5x_6 + x_6^3}{x_3x_4}$, yielding Theorem \ref{thm:z4}.
\end{proof}

\section{Constructing Subgraphs of Contours}

\label{sec:comb}

We now proceed to describe the construction of subgraphs for the Model 2, 3 and 4 quivers $Q_2$, $Q_3$, and $Q_4$, respectively.  Our procedure involves contours and is analogous to the construction
from Section 4 of \cite{LaiMus}.  We first lift our $\mathbb{Z}^3$-parametrization to a $\mathbb{Z}^6$-parametrization:
$$\phi: (i,j,k) \longrightarrow (a,b,c,d,e,f) = (j+k, -i-j-k, i+k, j+1-k, -i-j-1+k, i+1-k).$$
We use the resulting $6$-tuples to build subgraphs of the brane tilings $\mathcal{T}_2$ (Figure \ref{fig:Model2-contour}), $\mathcal{T}_3$ (Figure \ref{fig:Model3-contour}), and $\mathcal{T}_4$ (Figure \ref{fig:Model4-contour}), respectively.

\subsection{Model 1}

We begin by reviewing the situation for Model 1 as described in \cite{LaiMus}.
Given a $6$-tuple $(a,b,c,d,e,f) \in \mathbb{Z}^6$, we consider a {\bf contour} $\mathcal{C}_1(a,b,c,d,e,f)$ whose side-lengths are $a,b, \dots , f$ in counter-clockwise order (starting with a side in the downward and rightward direction when the entry $a$ is positive).  In the case of a negative entry, we draw the contour in the opposite direction for the associated side.  Several qualitatively different contours are illustrated in Figure \ref{fig:contours} with their corresponding subgraphs of the brane tiling $\mathcal{T}_1$ shown in Figure \ref{fig:ex0}.

\begin{figure}
    \centering
    \scalebox{0.9}{\includegraphics[keepaspectratio=true, width=150 mm]{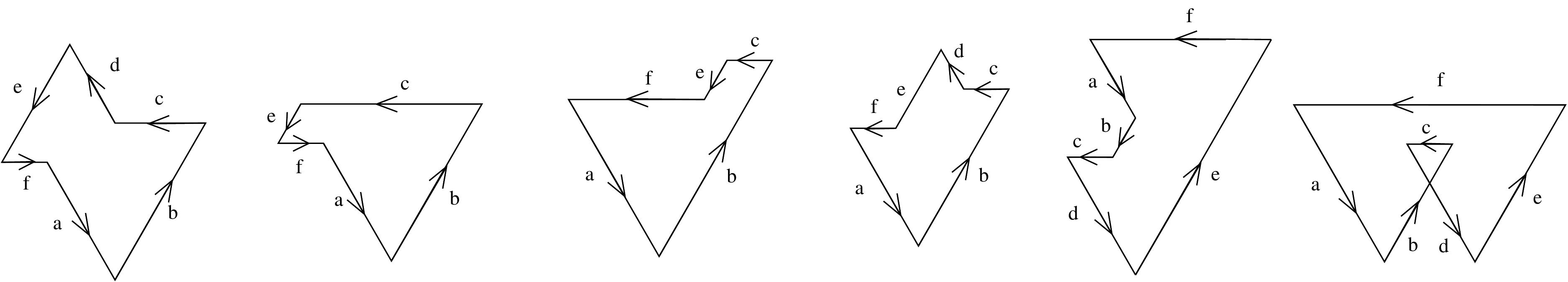}}
\caption{\small From left to right, the cases where $(a,b,c,d,e,f) = $ (1)  $(+, -, + , +, -, +)$, (2) $(+, -, +, 0, -, +)$, (3) $(+, -, +, 0, -, -)$,
(4)$(+, -, + , +, -, -)$,  (5) $(+, +, +, -, +, -)$, (6) $(+, -, +, -, +, -)$. (Figure 12 of \cite{LaiMus}.)}
    \label{fig:contours}
\end{figure}

\begin{figure}
\includegraphics[width=4.9in]{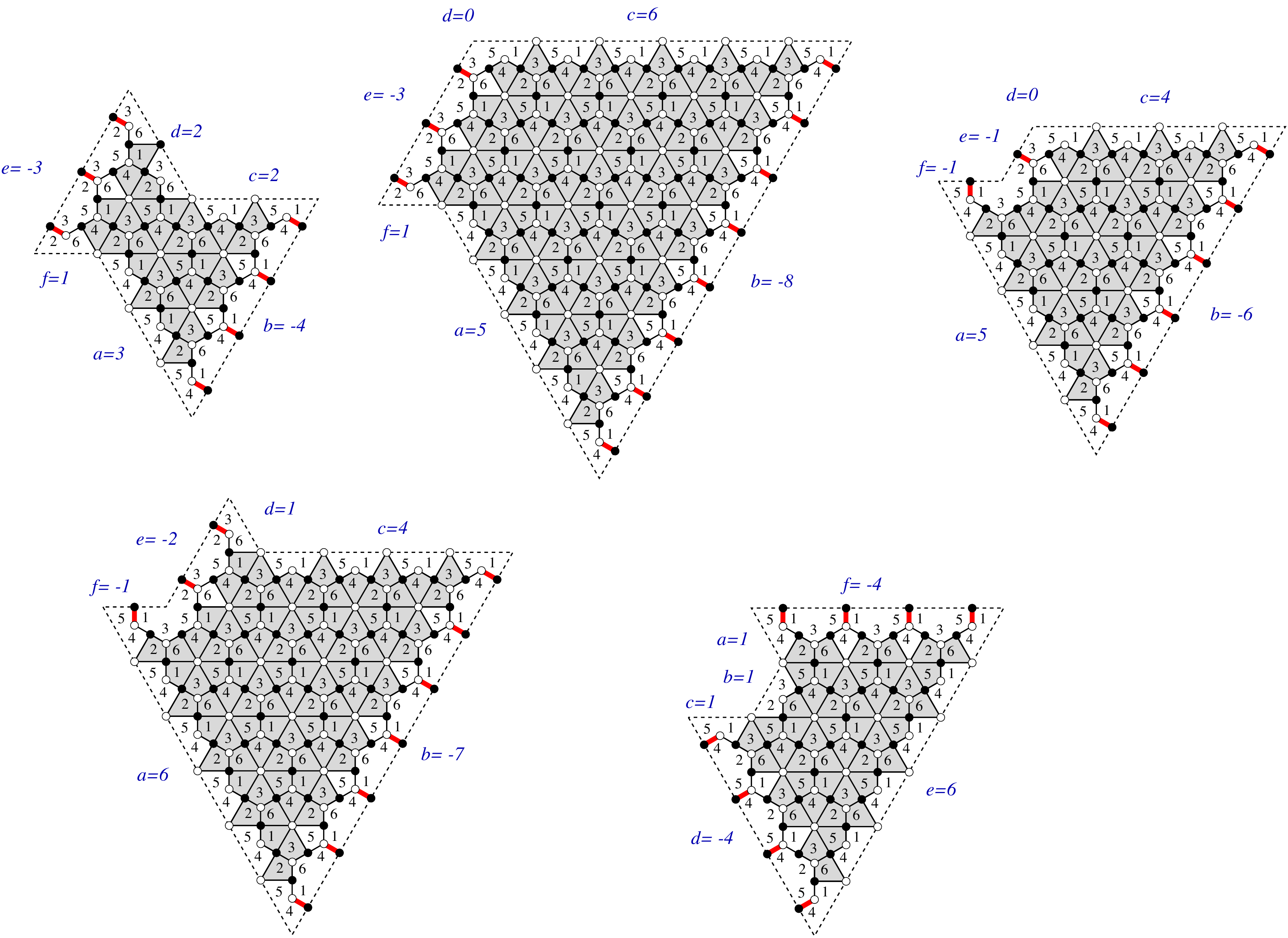}
\caption{
(1) $\mathcal{G}_1(3, -4, 2, 2, -3, 1)$, (2) $\mathcal{G}_1(5,-8,6,0,-3,1)$, (3) $\mathcal{G}_1(5,-6,4,0,-1,-1)$,
(4) $\mathcal{G}_1(6,-7,4,1,-2,-1)$, and (5) $\mathcal{G}_1(1, 1, 1, -4, 6, -4)$ respectively. (Figure 13 of \cite{LaiMus}.)}
\label{fig:ex0}
\end{figure}

\begin{definition} [Definition 4.1 of \cite{LaiMus}]
\label{def:subgraphs} Suppose that the contour $\mathcal{C}_1(a,b,c,d,e,f)$ does not intersect itself (the rightmost picture in Figure \ref{fig:contours} shows an example of a contour with self-intersections). We define $\mathcal{G}_1(a,b,c,d,e,f)$ and $\widetilde{\mathcal{G}}_1(a,b,c,d,e,f)$ by the following rules:

Step 1: The brane tiling $\mathcal{T}_1$ consists of a subdivided triangular lattice.  We superimpose the contour $\mathcal{C}_1(a,b,c,d,e,f)$ on top of $\mathcal{T}_1$ so that its sides follow the lines of the triangular lattice, beginning at a white vertex of degree $6$.  In particular, sides $a$ and $d$ are tangent to the faces $1$ and $2$, sides $b$ and $e$ are tangent to the faces $5$ and $6$, and sides $c$ and $f$ are tangent to the faces $3$ and $4$.  We scale the contour so that a side of length $\pm 1$ transverses two edges of the brane tiling $\mathcal{T}_1$, and thus starts and ends at a white vertex of degree $6$ with no such white vertices in-between.

Step 2: For any side of positive (resp. negative) length, we remove all black (resp. white) vertices along that side.

Step 3: A side of length zero corresponds to a single white vertex.  If one of the adjacent sides is of negative length or also of length zero, then that white vertex is removed during step 2.  On the other hand, if the side of length zero is adjacent to two sides of positive length, we keep the white vertex.  

Step 4: We define $\widetilde{\mathcal{G}}_1(a,b,c,d,e,f)$ to be the resulting subgraph, which will contain a number of black vertices of valence one.  After matching these up with the appropriate white vertices and continuing this process until no vertices of valence one are left, we obtain a simply-connected graph $\mathcal{G}_1(a,b,c,d,e,f)$ which we call the {\bf Core Subgraph}, following the notation of \cite{BMPW}.
\end{definition}

We now describe the weighting scheme that yields Laurent polynomials from the subgraphs defined above, following Section 5 of \cite{LaiMus} and based on earlier work in \cite{speyer}, \cite{GK}, \cite{zhang}, and \cite{LMNT}.  We associate the \textbf{weight} $\frac{1}{x_i x_j}$ to each edge bordering the faces labeled $i$ and $j$ in the brane tiling and let $\mathcal{M}(G_1)$ denote the set of perfect matchings of a subgraph $G_1$ of the brane tiling. Define the weight $w(M)$ of a perfect matching $M$ in the usual manner as the product of the weights of the edges included in the matching under the weighting scheme, and then we define the weight of $G_1$ as
\[
w(G) = \sum_{M \in \mathcal{M}(G_1)}w(M).
\]
We define the \textbf{covering monomial for Model 1}, $m(\widetilde{G}_1)$ of  the graph $\widetilde{G}_1=\widetilde{\mathcal{G}}_1(a,b,c,d,e,f)$ as the product $x_1^{a_1}x_2^{a_2}x_3^{a_3}x_4^{a_4}x_5^{a_5}x_6^{a_6}$, where $a_j$ is the number of faces labeled $j$ restricted inside the contour $\mathcal{C}_1(a,b,c,d,e,f)$.  This definition of the covering monomial is based on earlier work of \cite{jeong} and \cite{JMZ}, as motivated by conductances coordinates of \cite{GK}.

\begin{theorem} [Theorem 5.9 of \cite{LaiMus}] \label{thm:formula1} Let $(a,b,c,d,e,f) \in \mathbb{Z}^6$ be the image of $\phi(i,j,k)$ for $(i,j,k) \in \mathbb{Z}^3$, where $\phi$ is defined at the beginning of Section \ref{sec:comb}.  Then as long as the contour $\mathcal{C}_1(a,b,c,d,e,f)$ has no self-intersections, then
$$z_{i,j,k}^{(1)} = m(\widetilde{\mathcal{G}}_1(a,b,c,d,e,f)) \cdot w(\mathcal{G}_1(a,b,c,d,e,f)).$$
\end{theorem}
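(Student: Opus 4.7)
The plan is to proceed by induction on the length of a toric mutation sequence reaching the cluster variable $z_{i,j,k}^{(1)}$, coordinated with the corresponding change of contour under the parametrization $\phi$. For the base case one takes $(i,j,k)$ to be one of the six points listed in $\Delta_1$, so $z_{i,j,k}^{(1)} = x_r$ for the appropriate $r$; here $\mathcal{C}_1(\phi(i,j,k))$ is a minimal loop around a single face labeled $r$, the Core Subgraph $\mathcal{G}_1$ is trivial (a single edge or empty), and the covering monomial $m(\widetilde{\mathcal{G}}_1)$ is exactly $x_r$, so the identity holds.

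For the inductive step, I would pick a toric mutation $\mu_r$ whose effect on $(i,j,k)$ (read off from Figures \ref{fig:ModelII}, \ref{fig:ModelIII}, \ref{fig:ModelIV}) corresponds, under $\phi$, to a localized deformation of the contour near a single corner (typically changing one or two adjacent side lengths by $\pm 1$). The key tool is Kuo's graphical condensation applied to the Core Subgraph, with four boundary vertices chosen at the corners affected by the deformation. This produces an identity
\[
w(\mathcal{G}_1) \, w(\mathcal{G}_1') \;=\; w(\mathcal{G}_1^{(+)})\, w(\mathcal{G}_1^{(-)}) \;+\; w(\mathcal{G}_1^{(a)})\, w(\mathcal{G}_1^{(b)}),
\]
where each term on the right corresponds to the contour for a cluster variable one mutation away. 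Multiplying both sides by the appropriate product of covering monomials matches this with the binomial exchange relation $x_r x_r' = \prod_{r\to j} x_j^{a_{r\to j}} + \prod_{j\to r} x_j^{b_{j\to r}}$ induced by $\mu_r$ on $Q_1$.

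To carry this out one must track $m(\widetilde{\mathcal{G}}_1)$ carefully under the contour deformation. Since the covering monomial records, with multiplicity, the labeled faces enclosed by $\mathcal{C}_1$, a local deformation adds or removes a predictable set of faces whose labels line up precisely with the monomials appearing in Kuo's identity; combined with the induction hypothesis, this gives the exchange relation for $z_{i,j,k}^{(1)}$ and hence the claim at the new seed. The base case together with this matched update determines the formula on all points reachable by toric mutation sequences that stay inside the non-self-intersecting locus.

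The main obstacle will be the case analysis. The contour $\mathcal{C}_1(a,b,c,d,e,f)$ has six qualitatively distinct non-self-intersecting shapes (Figure \ref{fig:contours}), with additional subtleties when sides are of length $0$ (Step 3 of Definition \ref{def:subgraphs}) and when a deformation changes the sign of a side length. For each shape and each of the six possible $\mu_r$ one must verify (i) that Kuo's identity applies with a valid choice of boundary vertices, (ii) that the resulting four contour-subgraphs are themselves among the admissible non-self-intersecting cases, and (iii) that the covering-monomial bookkeeping matches up with the exchange relation. As in \cite{LaiMus}, this is handled by choosing a canonical reduced mutation sequence from $\Delta_1$ to any target $(i,j,k)$ in the non-self-intersecting locus along which every intermediate contour is also non-self-intersecting, so that the induction is genuinely available; verifying the existence of such reduced sequences, and the Kuo identity in each degenerate subcase, is the most technical part of the argument.
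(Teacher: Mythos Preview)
This theorem is not proved in the present paper; it is simply quoted as Theorem~5.9 of \cite{LaiMus}, and the current paper uses it as a black box (the proofs in Section~\ref{sec:proofs} deduce Theorems~\ref{thm:formula2}--\ref{thm:formula4} from it via urban renewal). Your outline --- induction along toric mutation sequences, with Kuo condensation supplying the weighted-matching identity that matches each binomial exchange relation, and covering-monomial bookkeeping absorbing the monomial factors --- is precisely the strategy carried out in \cite{LaiMus}, and you have correctly identified the case analysis over contour shapes and zero-length sides as the main technical burden.
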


\subsection{Model 2}
For a given $(a,b,c,d,e,f) \in \mathbb{Z}^6$ in the image of this map $\phi$, we begin at one of the $5$-valent white vertices in the brane tiling $\mathcal{T}_2$ associated to $(Q_2,W_2)$.  This vertex is unique in the fundamental domain of the brane tiling since there is a unique negative term in the potential $W_2$ of degree $5$.  We then follow the lines as illustrated in Figure \ref{fig:Model2-contour}.  Here we have shown the orientations if the entry is positive.  For a negative entry, we instead traverse the indicated trajectory in the opposite direction.  Like the Model 1 case, the absolute value of an entry of this six-tuple indicates the length of the contour in that direction, where a segment of length one ends at the next five-valent white vertex reached (going through one black vertex in the process).  We abbreviate this contour as $\mathcal{C}_2(a,b,c,d,e,f)$.  We next describe how to translate these contours into subgraphs.  We follow the same prescription as in \cite[Definition 4.1]{LaiMus}.

\begin{figure}
\includegraphics[width=3in]{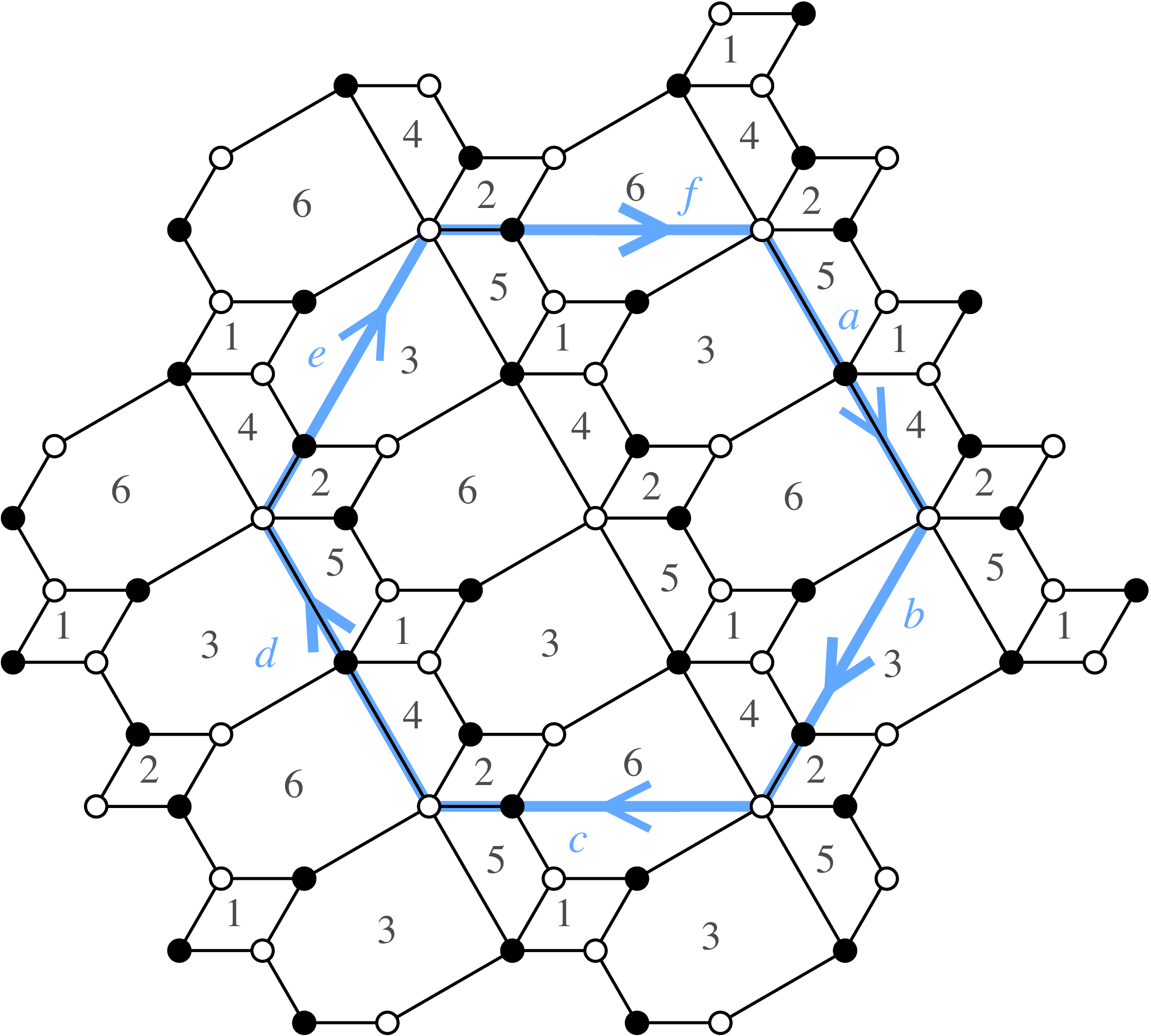}
\caption{Contour for $(Q_2,W_2)$, on top of the brane tiling $\mathcal{T}_2$, in the positive directions with segments of length one indicated. (Note that the sides $a,b,\dots, f$ appear to be in clockwise order.  However, as in Figures \ref{fig:Model2initial} and \ref{fig:Model2egs}, for the $6$-tuples lifted from $\mathbb{Z}^3$ via $\phi$ as in this paper, the sides instead will appear in counter-clockwise order.) \\
}
\label{fig:Model2-contour}
\end{figure}

\begin{definition}
Suppose that the contour $\mathcal{C}_2(a,b,c,d,e,f)$ does not intersect itself.  Under this assumption, we use the
contour $\mathcal{C}_2(a,b,c,d,e,f)$ to define two subgraphs, $\widetilde{\mathcal{G}}_2(a,b,c,d,e,f)$ and $\mathcal{G}_2(a,b,c,d,e,f)$, of the Model 2 brane tiling $\mathcal{T}_2$ by the following:

Step 1: Superimpose the contour $\mathcal{C}_2(a,b,c,d,e,f)$ onto $\mathcal{T}_2$ starting from a $5$-valent white vertex as above.

Step 2: For any side of positive (resp. negative) length, we remove all black (resp. white) vertices along that side.

Step 3: A side of length zero corresponds to a single white vertex.  If one of the adjacent sides is of negative length or also of length zero, then that white vertex is removed during Step 2.
On the other hand, if the side of length zero is adjacent to two sides of positive length, we keep the white vertex.  

Step 4: We define $\widetilde{\mathcal{G}}_2(a,b,c,d,e,f)$ to be the resulting subgraph.  However, this subgraph will often include black vertices of valence one.  After matching these up with the appropriate white vertices and continuing this process until there are no vertices of valence one left, we are left with the simply-connected graph $\mathcal{G}_2(a,b,c,d,e,f)$.
\end{definition}

To obtain Laurent polynomial expressions from the graphs $\mathcal{G}_2(a,b,c,d,e,f)$'s, we need to define covering monomials for the Model 2 case.  Unlike the work of \cite[Definition 5.6]{LaiMus}, we have (i) both hexagonal and quadrilateral faces, and (ii) the contours lines can cut through particular faces (e.g. hexagonal faces 3 and 6).  We update the definition of covering monomial accordingly.

\begin{definition}
Given a contour $\mathcal{C}_2(a,b,c,d,e,f)$ which defines the extended subgraph $\widetilde{\mathcal{G}}_2 = \widetilde{\mathcal{G}}_2(a,b,c,d,e,f)$, we define the {\bf covering monomial for Model 2}
$m(\widetilde{\mathcal{G}}_2)$ as the product $x_1^{a_1}x_2^{a_2}x_3^{2a_3+b_3}x_4^{a_4}x_5^{a_5}x_6^{2a_6+b_6}$ where $a_j$ is the number of faces labeled $j$ which lie fully inside the contour $\mathcal{C}_2(a,b,c,d,e,f)$ and $b_3$ (resp. $b_6$) equals the number of hexagonal faces labeled $3$ (resp. $6$) which partially live inside the contour.
\end{definition}

We then compute the weight of the subgraph $\mathcal{G}_2 = \mathcal{G}_2(a,b,c,d,e,f)$ as
$$w(\mathcal{G}_2) = \sum_{M \in \mathcal{M}(\mathcal{G}_2)} w(M)$$
where $\mathcal{M}(\mathcal{G}_2)$ is the set of perfect matchings $M$ of subgraph $\mathcal{G}_2$, and the weight of a matching $M$ is the product
$w(M) = \prod_{e_{ij} \in M} \frac{1}{x_i x_j}$.
Here $e_{ij}$ is an edge in the perfect matching $M$ which borders the faces labeled $i$ and $j$.

\begin{remark}
\label{rem:CM-variant}
As another way of calculating the covering monomial, we can break all hexagons labeled $3$ or $6$ in the brane tiling into quadrilaterals (both labeled $3$ or $6$ respectively) as in Figure \ref{fig:CM-quads2}.  With this adjustment made, all six sides of the contour (even sides $b,c,e,$ and $f$) travel only along segments of the adjusted brane tiling.  Hence, for the purposes of defining the covering monomial, it is the usual definition of taking the product of all $x_j$ where $j$ is the label of a quadrilateral face (fully) contained inside the contour.
\end{remark}

\begin{figure}\centering
\includegraphics[width=3in]{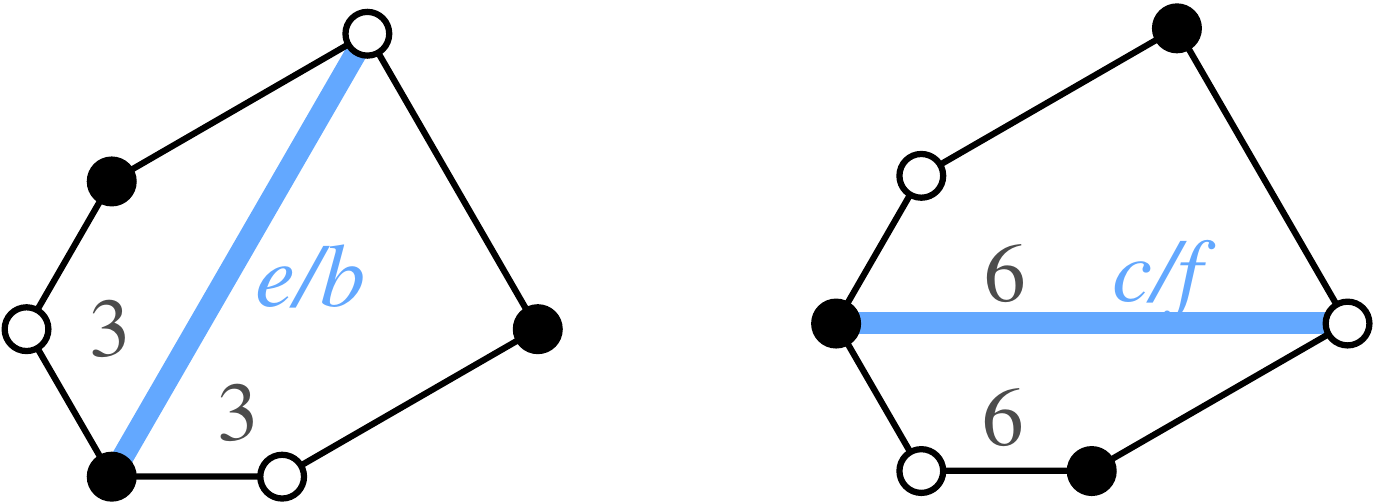}
\caption{Cutting Hexagonal Faces $3$ and $6$ into Quadrilaterals in the Model 2 case.}
\label{fig:CM-quads2}
\end{figure}

\begin{theorem} \label{thm:formula2} Let $(a,b,c,d,e,f) \in \mathbb{Z}^6$ be the image $\phi(i,j,k)$ for $(i,j,k) \in \mathbb{Z}^3$.  Then as long as the contour $\mathcal{C}_2(a,b,c,d,e,f)$ has no self-intersections, then
$$z_{i,j,k}^{(2)} = m(\widetilde{\mathcal{G}}_2(a,b,c,d,e,f)) \cdot w(\mathcal{G}_2(a,b,c,d,e,f)).$$
\end{theorem}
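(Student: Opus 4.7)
The plan is to deduce Theorem \ref{thm:formula2} from the Model~1 formula (Theorem \ref{thm:formula1}) by the same route as the algebraic proof of Theorem \ref{thm:z2}: mutation at vertex $1$ turns $Q_2$ into $Q_1$ and replaces $x_1$ by $Y_1 = (x_3x_5+x_4x_6)/x_1$. At the level of brane tilings, this mutation is an \emph{urban renewal} (square move) applied at every square face labeled $1$, so $\mathcal{T}_2$ arises from $\mathcal{T}_1$ by performing this local move globally. My task is therefore to show that this same transformation sends the subgraph $\mathcal{G}_1(a,b,c,d,e,f)$ together with its covering monomial to the subgraph $\mathcal{G}_2(a,b,c,d,e,f)$ together with its covering monomial in such a way that Theorem \ref{thm:formula1}, evaluated after the substitution $x_1 \mapsto Y_1$, collapses to the Model~2 right-hand side.

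First I would fix the geometric dictionary between the two contours. Each contour is rooted at a distinguished vertex: a white degree-$6$ vertex for Model~$1$, and the unique white $5$-valent vertex for Model~$2$ (arising from the single degree-$5$ negative term in $W_2$). These two vertices correspond under the urban renewal that transforms $\mathcal{T}_1$ into $\mathcal{T}_2$. Working side by side with Figure \ref{fig:Model2-contour} and the analogous picture of $\mathcal{T}_1$, I would check that each of the six side directions $a, b, \ldots, f$ traverses the edges of $\mathcal{T}_2$ that correspond exactly to the sides of $\mathcal{C}_1(a,b,c,d,e,f)$ post-urban-renewal, and that Steps 1--4 of the Model~$2$ subgraph construction therefore produce precisely the urban-renewal image of $\mathcal{G}_1(a,b,c,d,e,f)$. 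Once this identification is in place, the combinatorial heart of the argument is the standard local identity for a square move: applying urban renewal at all face-$1$ squares inside the region enclosed by the contour converts the weighted dimer partition function of $\mathcal{G}_1$ into the weighted dimer partition function of $\mathcal{G}_2$ up to an explicit monomial factor that simultaneously accounts for the substitution $x_1 \mapsto Y_1$ and for the difference between the two covering monomials.

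The main obstacle is the bookkeeping of covering monomials across urban renewal, especially along the boundary of the contour. When urban renewal is performed, each face-$1$ square merges with its two adjacent triangles of labels $3$ and $6$ to produce a single hexagonal face of label $3$ and a single hexagonal face of label $6$; this is exactly the reason $m(\widetilde{\mathcal{G}}_2)$ contributes $x_3^{2}$ and $x_6^{2}$ for fully enclosed hexagons but only $x_3$ or $x_6$ for hexagons that the contour bisects. Here the equivalent formulation in Remark \ref{rem:CM-variant} is essential: after slicing each hexagonal face of label $3$ or $6$ into two quadrilaterals, the Model~$2$ covering monomial reduces to the same ``one $x_j$ per fully enclosed quadrilateral face'' rule as in Model~$1$, which makes the match with the local urban-renewal computation transparent. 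Combining this local identity with the algebraic manipulations already carried out in the proof of Theorem \ref{thm:z2} then yields Theorem \ref{thm:formula2}.
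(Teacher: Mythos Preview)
Your approach is essentially the paper's: reduce to Theorem~\ref{thm:formula1} by urban renewal at every face labeled $1$, track the local effect on matchings, and reconcile the covering monomials. One step you omit but the paper treats explicitly: urban renewal applied to $\mathcal{T}_1$ does \emph{not} produce $\mathcal{T}_2$ directly, but rather an intermediate tiling containing $2$-valent white vertices bordering faces $4$ and $5$; one must then collapse each such vertex (merging its two black neighbors) to obtain $\mathcal{T}_2$. The paper checks that this collapse is a $1$-to-$1$ map on perfect matchings that multiplies $w$ by $x_4x_5$ while dividing $m$ by $x_4x_5$, so the product $m\cdot w$ is unaffected, and---crucially---that these $2$-valent vertices always lie strictly in the interior of the contour, so no boundary subtleties arise. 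Your sketch should include this, since your claim that ``$\mathcal{T}_2$ arises from $\mathcal{T}_1$ by performing this local move globally'' is not literally true without it.

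A smaller point: your geometric description of urban renewal (``each face-$1$ square merges with its two adjacent triangles of labels $3$ and $6$ to produce a single hexagonal face of label $3$ and a single hexagonal face of label $6$'') is garbled. In $\mathcal{T}_1$ every face is a quadrilateral; urban renewal at a face-$1$ square leaves a (rotated, smaller) face $1$ in place and enlarges each of the four neighboring faces $3,4,5,6$ from quadrilaterals to hexagons. The hexagonal $3$ and $6$ faces in $\mathcal{T}_2$ are then what survive after the $2$-valent collapse shrinks faces $4$ and $5$ back to quadrilaterals. Your use of Remark~\ref{rem:CM-variant} for the covering-monomial bookkeeping is fine and is equivalent to the paper's three displayed identities.
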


For the $6$-tuples corresponding to the image of $\phi(\Delta_1)$, i.e.
\begin{eqnarray*}
\hspace{2em} C_1^{(2)} = \mathcal{C}_2(0,0,1,-1,1,0), C_2^{(2)} = \mathcal{C}_2(-1,1,0,0,0,1), C_3^{(2)} = \mathcal{C}_2(0,1,-1,1,0,0), \\
\hspace{2em} C_4^{(2)} = \mathcal{C}_2(1,0,0,0,1,-1), C_5^{(2)} = \mathcal{C}_2(1,-1,1,0,0,0), C_6^{(2)} = \mathcal{C}_2(0,0,0,1,-1,1),
\end{eqnarray*}
we draw the contours and the associated subgraphs in Figure \ref{fig:Model2initial}.  In particular, the initial contour $C_1^{(2)}$ yields a subgraph $\mathcal{G}_2(0,0,1,-1,1,0)$ consisting solely of a quadrilateral labeled $1$.  The weighted enumeration of perfect matchings of this subgraph gives $Y_1 = \frac{x_4x_6+x_3x_5}{x_1}$ exactly as desired.  The remaining initial contours $C_2^{(2)},C_3^{(2)},\dots, C_6^{(2)}$ yield empty subgraphs $\mathcal{G}_2$ although their extended subgraphs $\widetilde{\mathcal{G}}_2$ contain some edges incident to $1$-valent black vertices.  Hence, $C_2^{(2)},C_3^{(2)},\dots, C_6^{(2)}$ yield $x_2,x_3,\dots,x_6$ as expected.  Some further examples of subgraphs associated to contours in $\mathcal{T}_2$ appear in Figure \ref{fig:Model2egs}.

\begin{figure}\centering
\includegraphics[width=6in]{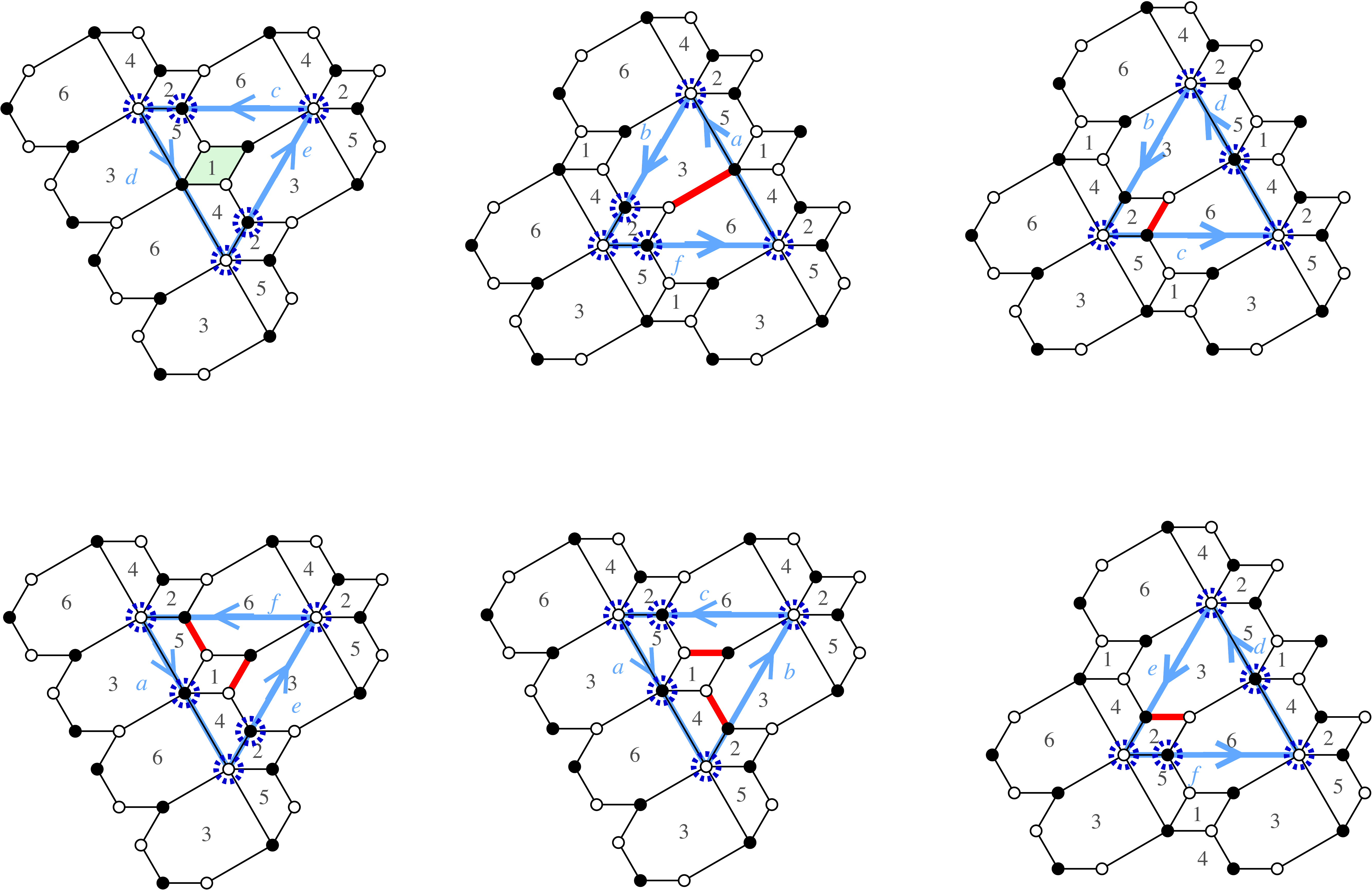}
\caption{Contours $C_1^{(2)},C_2^{(2)},\dots, C_6^{(2)}$, respectively for Model 2. Blue sundials indicate vertices along the contours which should be removed.
}
\label{fig:Model2initial}
\end{figure}

\begin{figure}\centering
\includegraphics[width=2in]{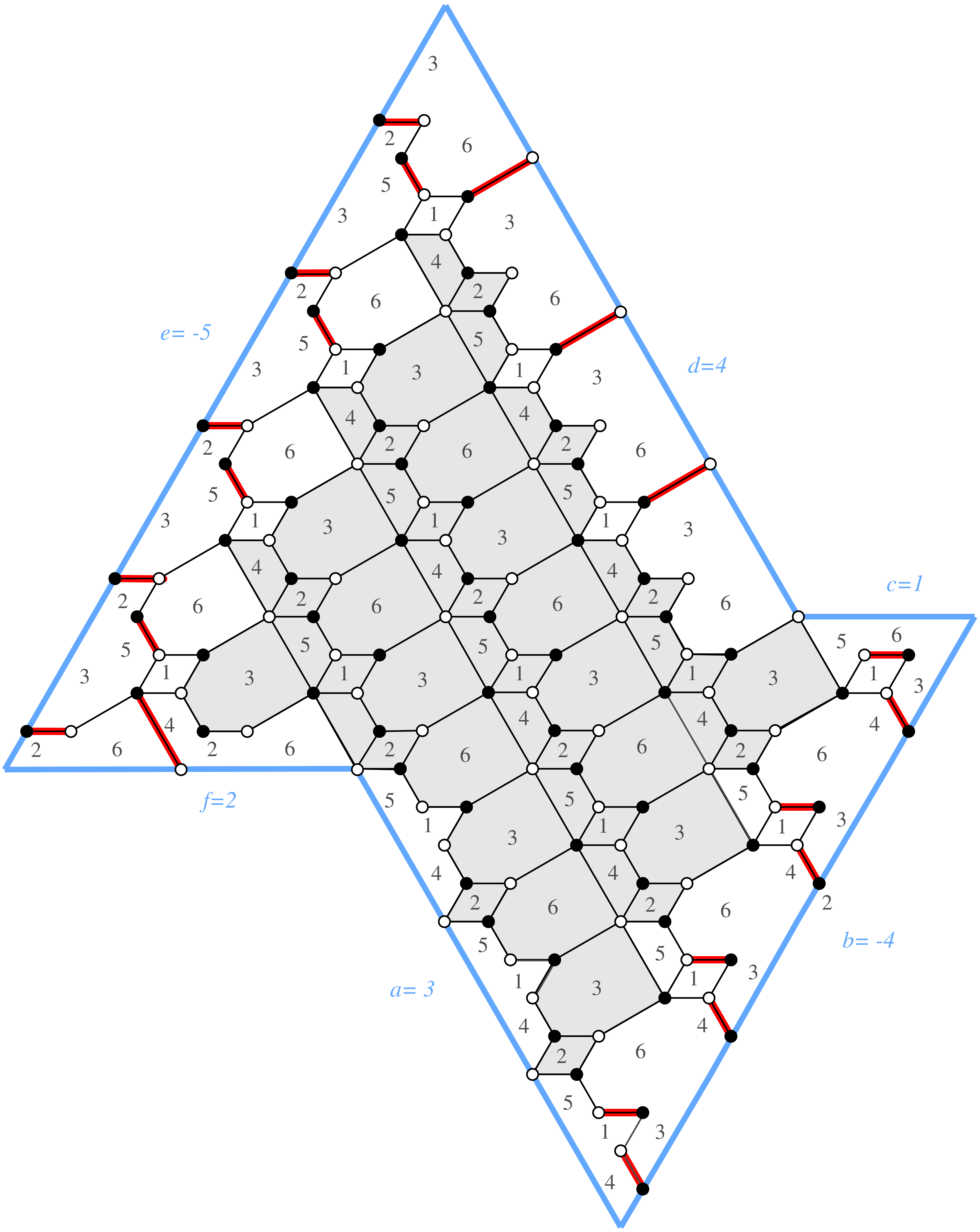} ~ \includegraphics[width=2.5in]{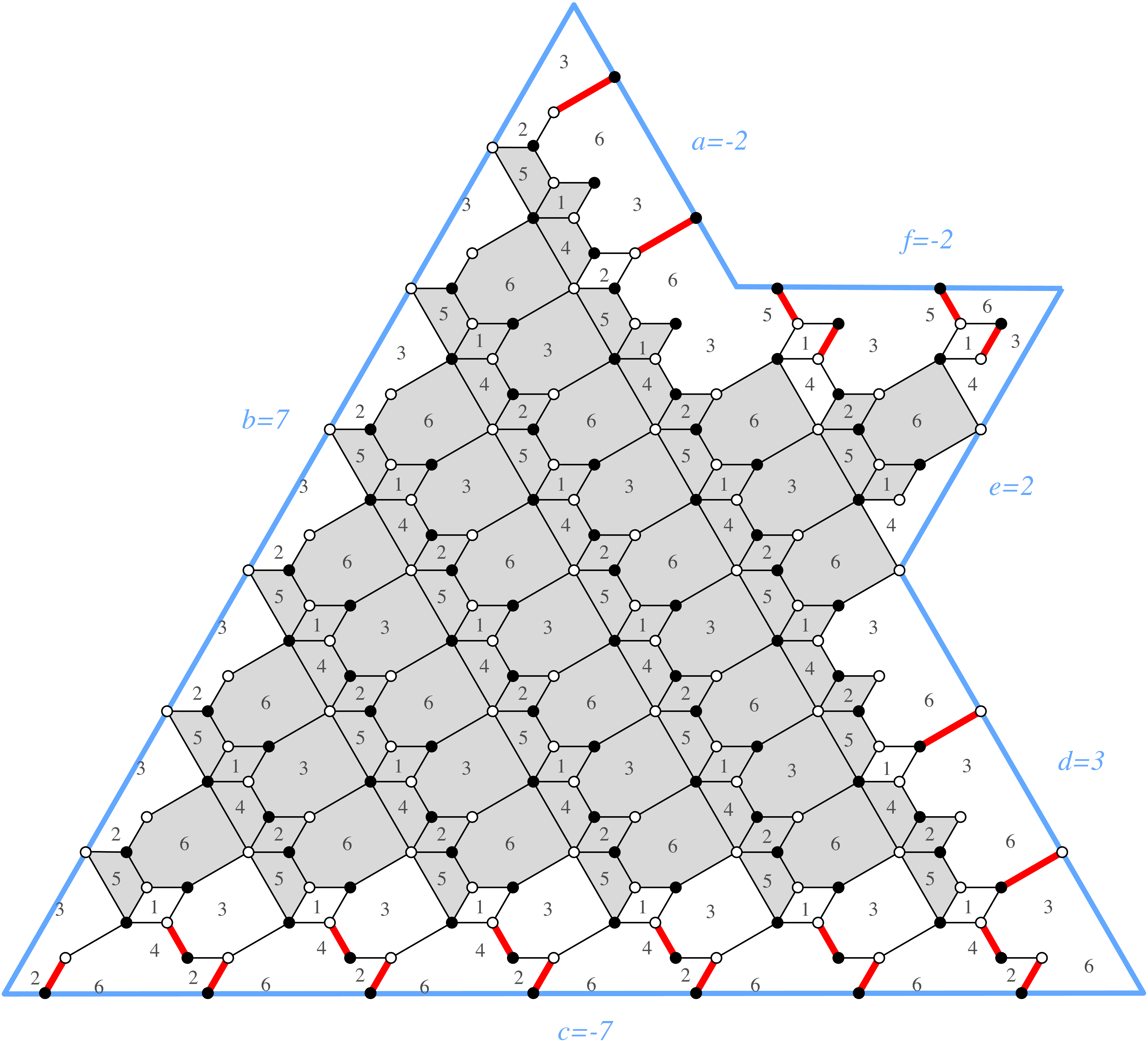} ~ \includegraphics[width=2in]{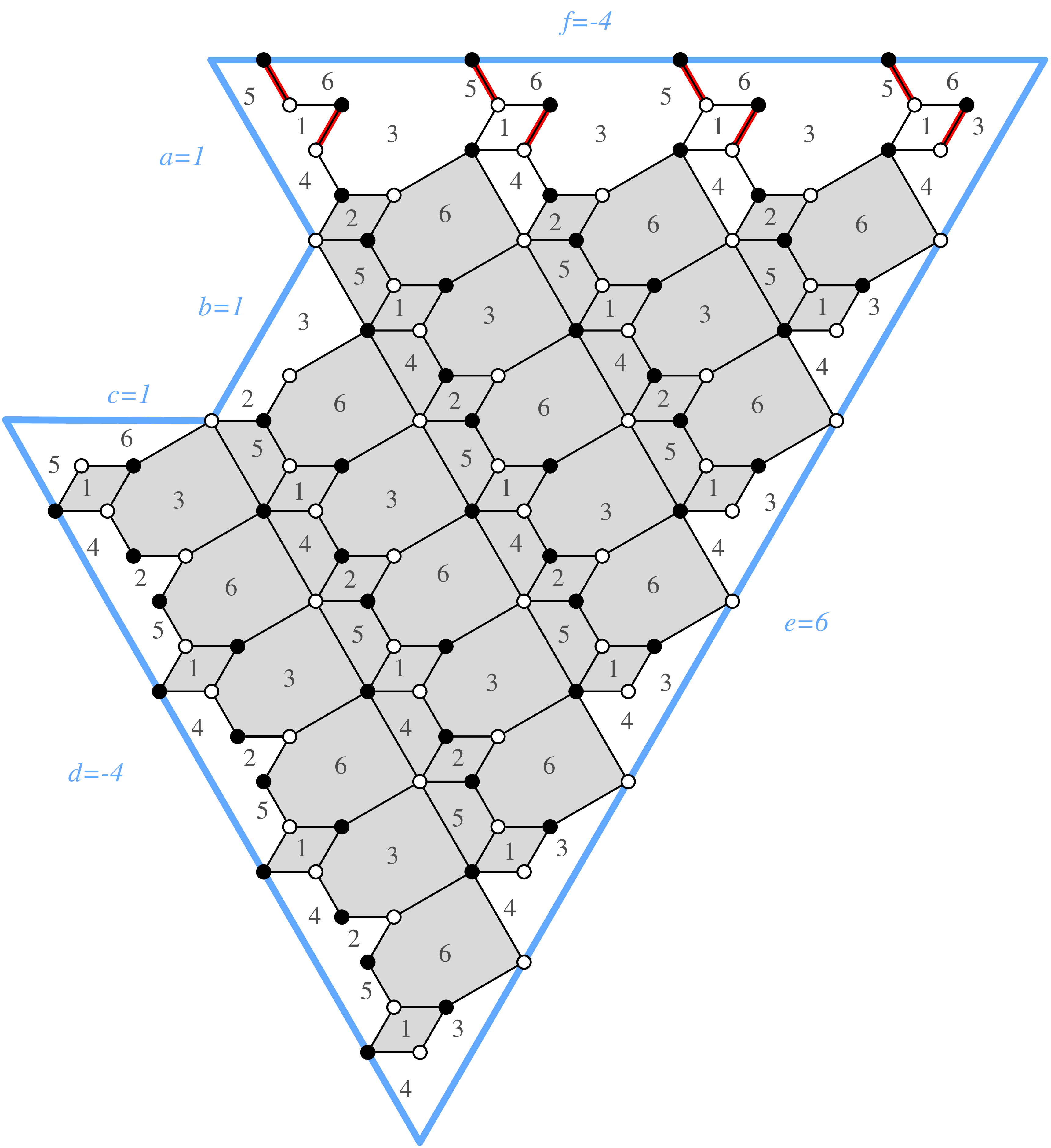}
\caption{Examples of larger contours and the corresponding subgraphs for Model 2.}
\label{fig:Model2egs}
\end{figure}

\subsection{Model 3}

For a given $(a,b,c,d,e,f) \in \mathbb{Z}^6$ in the image of this map $\phi$, we begin at one of the white vertices of degree $4$ in the brane tiling associated to $(Q_3,W_3)$ that borders the faces $2, 3, 5,$ and $6$.  Analogous to the Model 1 and 2 cases, this is a choice of vertex that is unique in the fundamental domain of this brane tiling.  We then follow the lines as illustrated in Figure \ref{fig:Model3-contour}.  Again, we have shown the orientations if the entry is positive.  For a negative entry, we instead traverse the indicated trajectory in the opposite direction.  The absolute value of an entry of this six-tuple indicates the length of the contour in that direction, where a segment of length one ends at the next translate of the initial vertex.  Unlike the Model 2 case, more than one type of white vertex (and more than one type of black vertex) can appear along a segment of length one.  In particular, along the sides labeled $b$ and $e$, a segment of length one contains an additional white vertex in its interior.  Despite this change along sides $b$ and $e$,  the other four sides behave just as in the Model 2 case.  We abbreviate this contour as $\mathcal{C}_3(a,b,c,d,e,f)$.  We next describe how to translate these contours into subgraphs.  We follow the same prescription as above.

\begin{figure}
\includegraphics[width=3.5in]{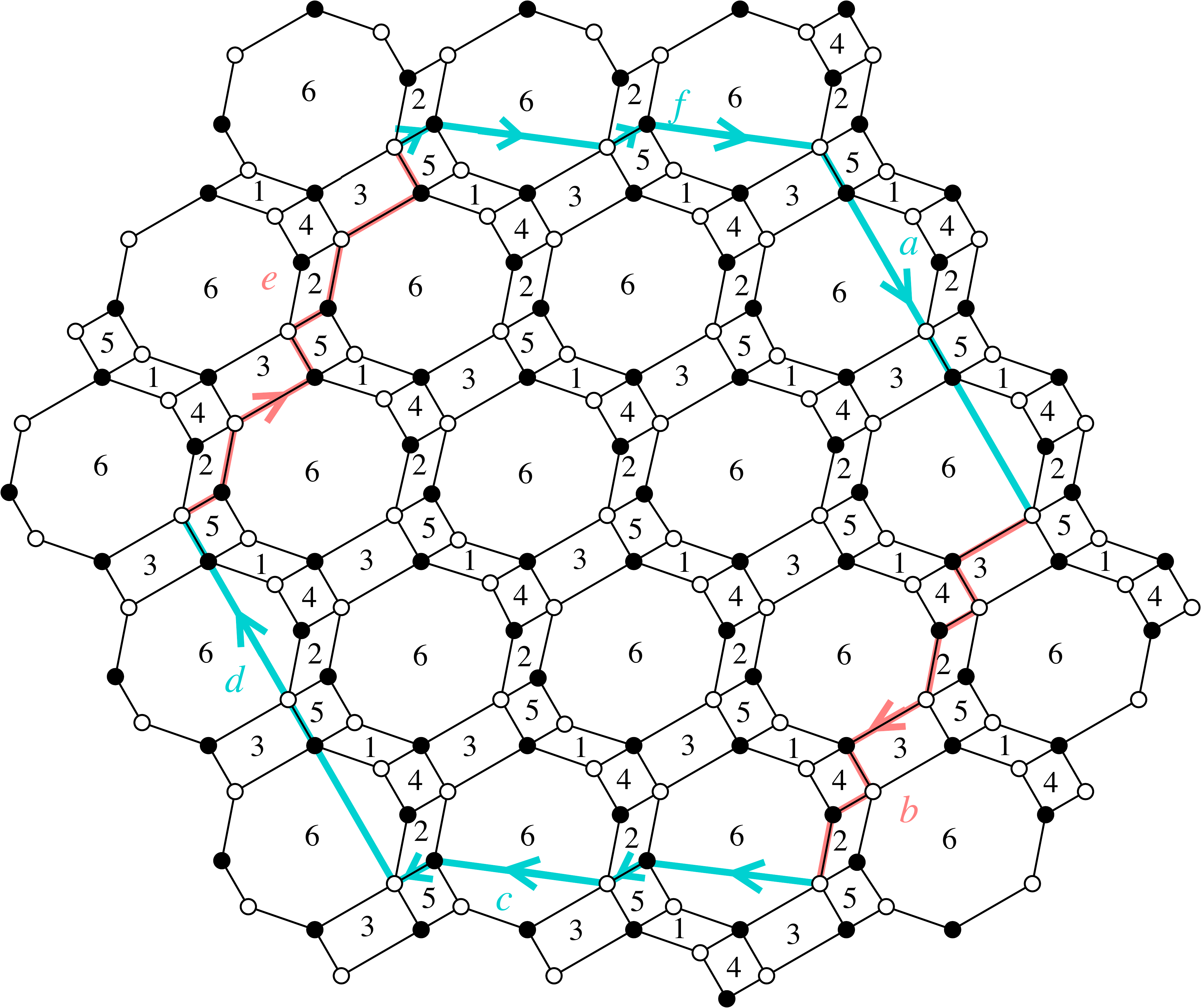}
\caption{Contour for $(Q_3,W_3)$, on top of the brane tiling $\mathcal{T}_3$, in the positive directions.  Segments of length two indicated.}
\label{fig:Model3-contour}
\end{figure}

\begin{definition} \label{def:Model3subgraph}
Suppose that the contour $\mathcal{C}_3(a,b,c,d,e,f)$ does not intersect itself (although back-tracking is allowed).  Under this assumption, we use the
contour $\mathcal{C}_3(a,b,c,d,e,f)$ to define two subgraphs, $\widetilde{\mathcal{G}}_3(a,b,c,d,e,f)$ and $\mathcal{G}_3(a,b,c,d,e,f)$, of the Model 3 brane tiling $\mathcal{T}_3$ by the following:

Step 1: Superimpose the contour $\mathcal{C}_3(a,b,c,d,e,f)$ onto $\mathcal{T}_3$ starting from a $4$-valent white vertex bordering the faces $2$, $3$, $5$, and $6$ as above.

Step 2: For any side of positive (resp. negative) length, we remove all black (resp. white) vertices along that side.

Step 3: A side of length zero corresponds to a single white vertex.  If one of the adjacent sides is of negative length or also of length zero, then that white vertex is removed during Step 2.  On the other hand, if the side of length zero is adjacent to two sides of positive length, we keep the white vertex. 

Step 4: We define $\widetilde{\mathcal{G}}_3(a,b,c,d,e,f)$ to be the resulting subgraph.  However, this subgraph will often include black vertices of valence one.  After matching these up with the appropriate white vertices and continuing this process until there are no vertices of valence one left, we are left with the simply-connected graph $\mathcal{G}_3(a,b,c,d,e,f)$.
\end{definition}

\begin{figure}\centering
\includegraphics[width=6in]{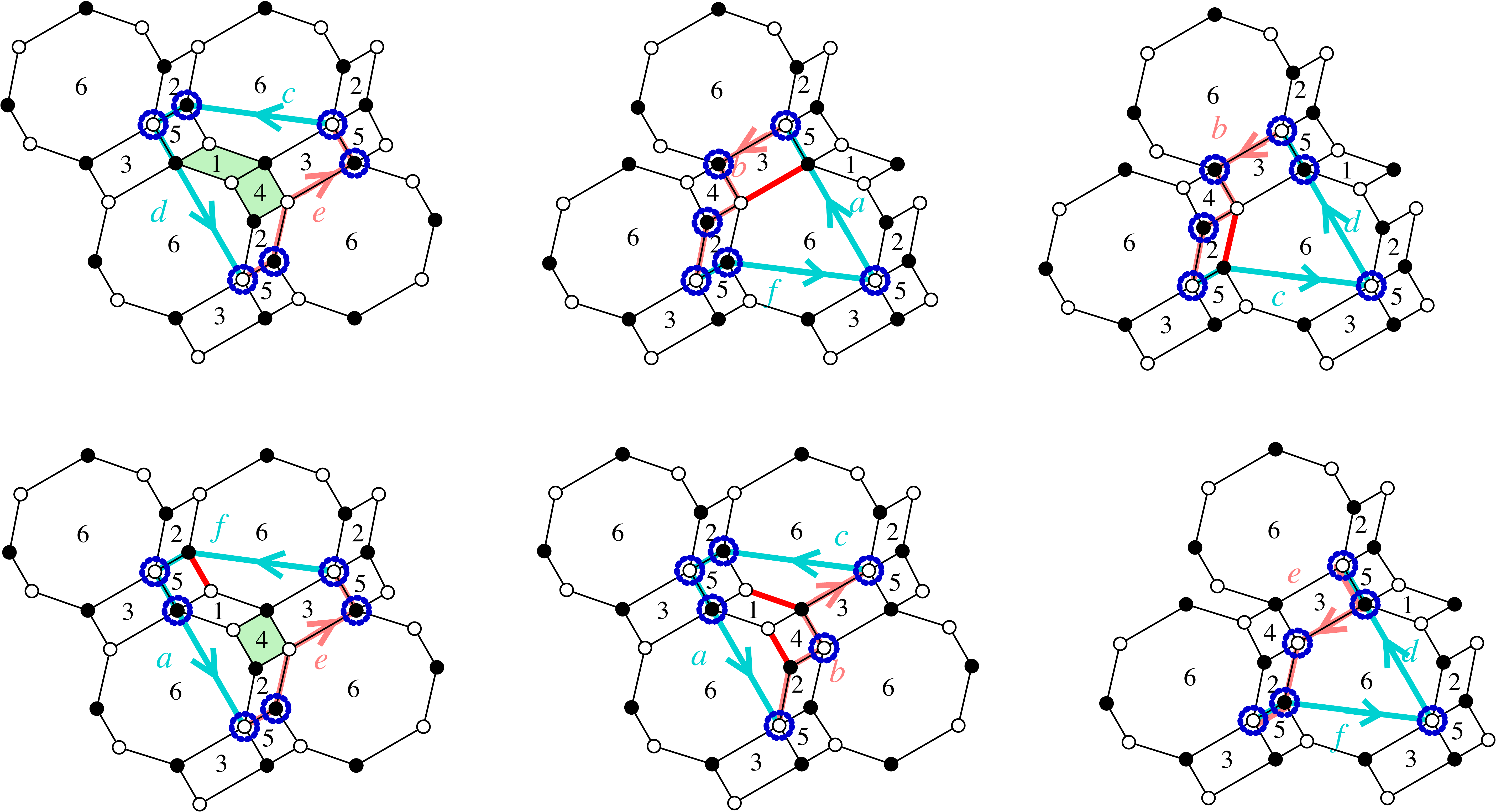}
\caption{Contours $C_1^{(3)},C_2^{(3)},\dots, C_6^{(3)}$, respectively for Model 3.  Blue sundials indicate vertices along the contours which should be removed.}
\label{fig:Model3initial}
\end{figure}

\begin{figure}\centering
\includegraphics[width=2in]{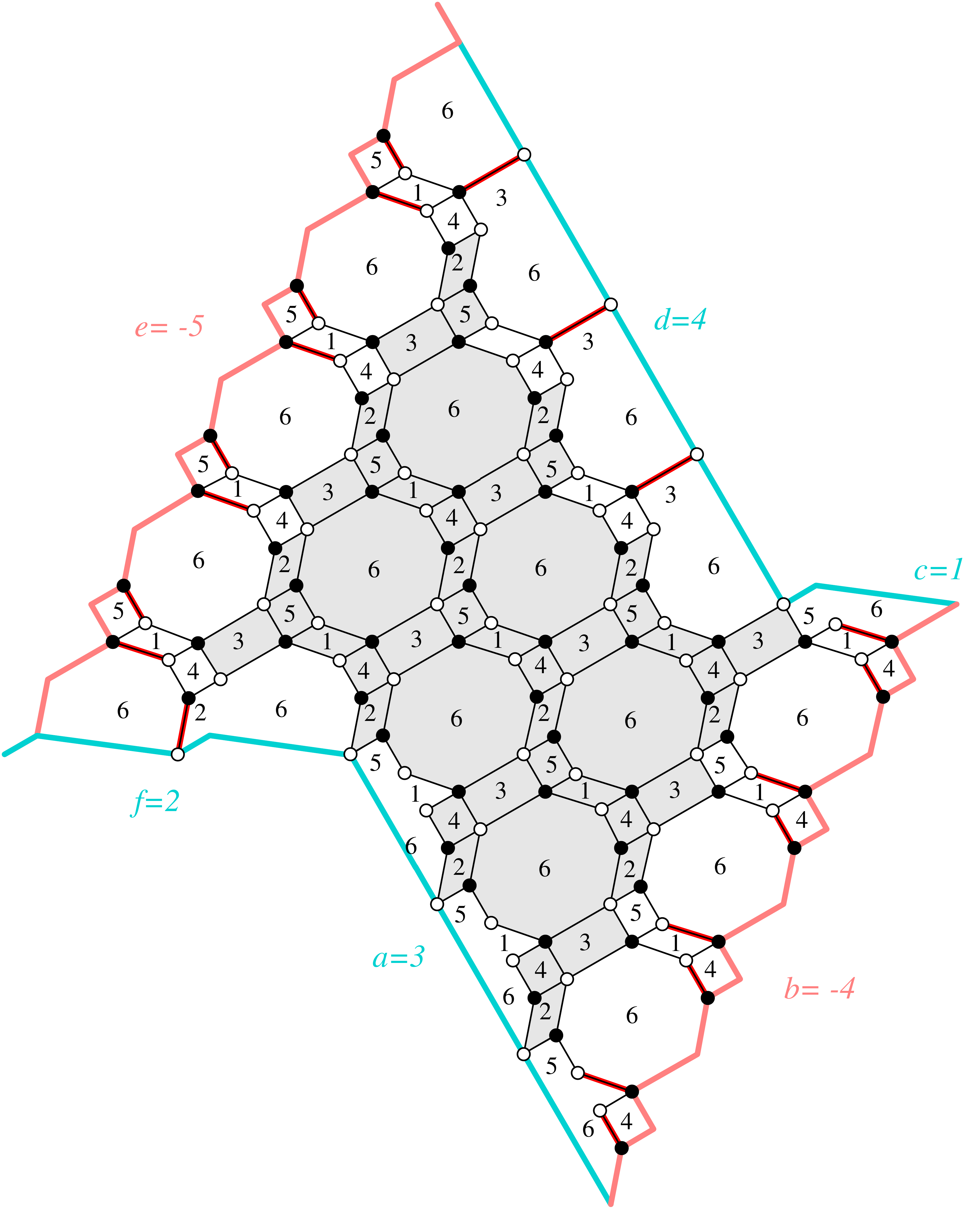} ~ \includegraphics[width=2.5in]{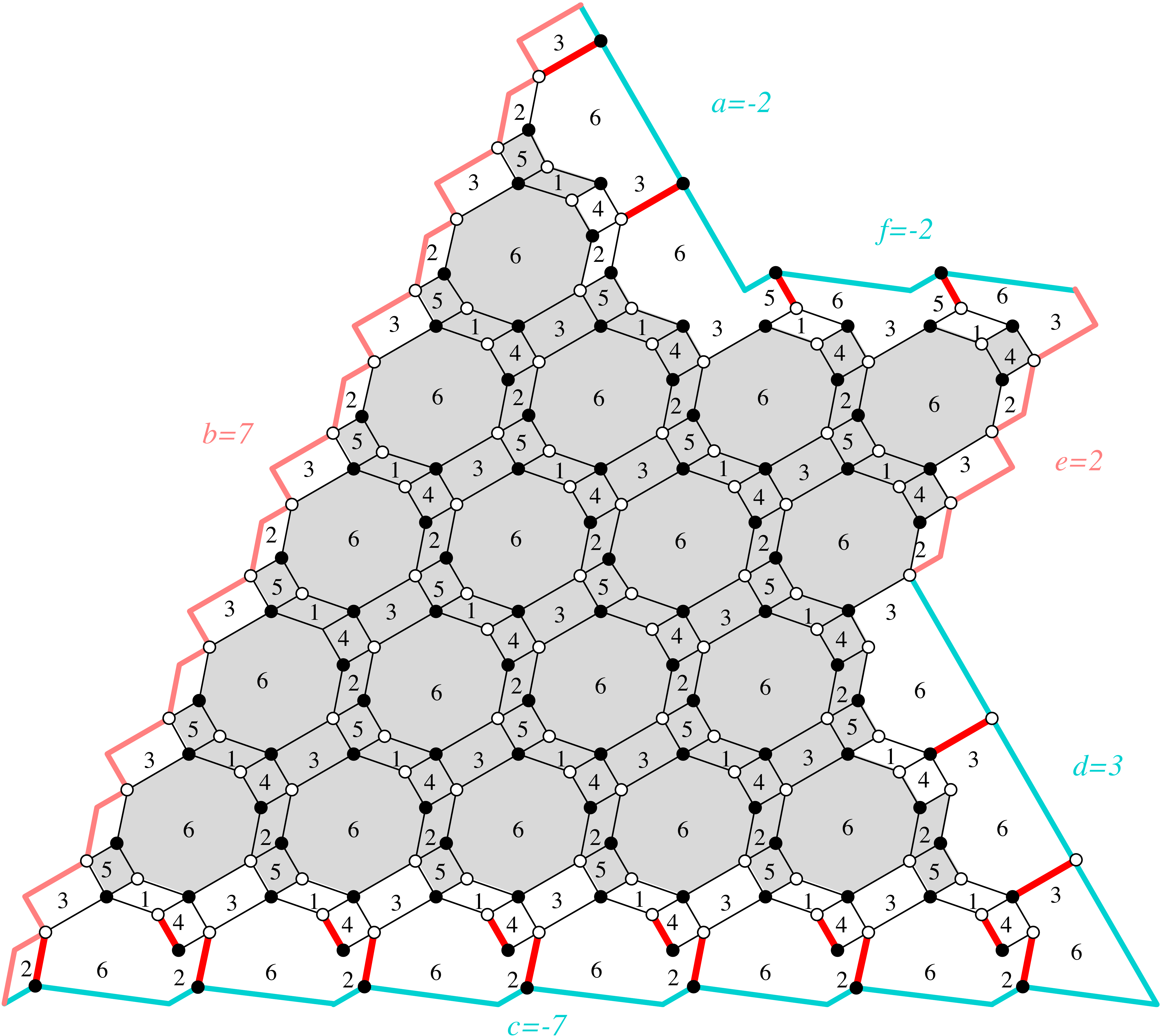} ~ \includegraphics[width=2in]{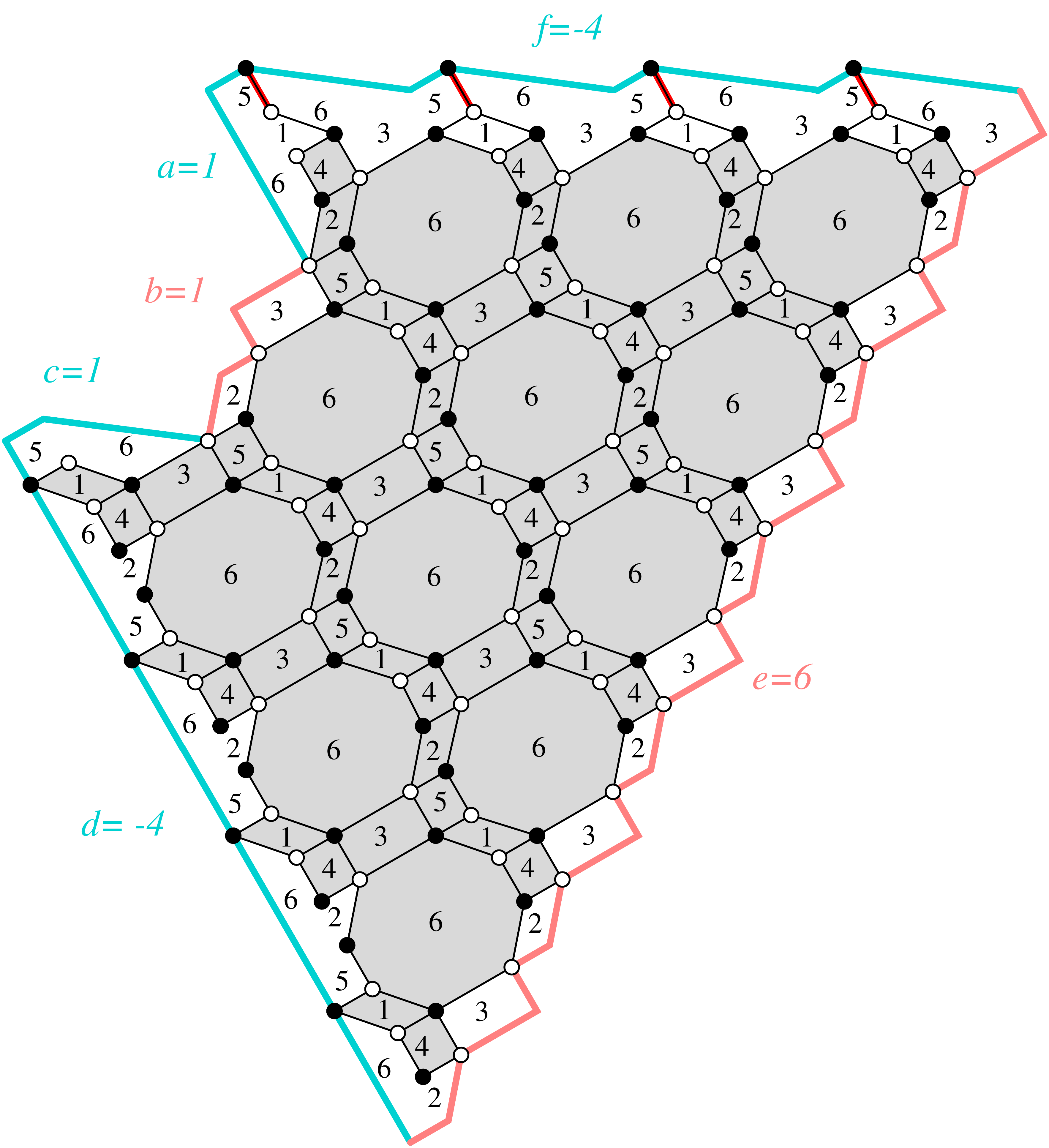}
\caption{Examples of larger contours and the corresponding subgraphs for Model 3.}
\label{fig:Model3egs}
\end{figure}

For the $6$-tuples corresponding to the image of $\phi(\Delta_1)$, we draw the initial contours $C_1^{(3)},C_2^{(3)},\dots, C_6^{(3)}$ and the associated subgraphs in Figure \ref{fig:Model3initial}.  In particular, the initial contour $C_1^{(3)}$ yields a subgraph $\mathcal{G}_3(0,0,1,-1,1,0)$ consisting of two connected quadrilaterals labeled $1$ and $4$, while the initial contour $C_4^{(3)}$ yields a subgraph $\mathcal{G}_3(1,0,0,0,1,-1)$ consisting solely of a quadrilateral labeled $4$.  The remaining initial contours yield empty subgraphs $\mathcal{G}_3$ although their extended subgraphs $\widetilde{\mathcal{G}}_3$ contain some edges incident to $1$-valent black vertices.  Notice that in the case of $C_6^{(3)}$, the white and black vertices are deleted along the sides of the contours just as described by Steps 2 and 3, despite the fact that this contour involves some back-tracking between edges $d$ and $e$ as well as between edges $e$ and $f$.  Some further examples appear in Figure \ref{fig:Model3egs}.

We obtain Laurent polynomial expressions from the graphs $\mathcal{G}_3(a,b,c,d,e,f)$'s by the same process as above.  For this, we must extend our definition of covering monomials to the Model 3 case.  We now have octagonal faces, which the contours lines can cut through.  However, we again see that the remaining faces are quadrilaterals and contours do not cut through such faces.

\begin{definition}
Given a contour $\mathcal{C}_3(a,b,c,d,e,f)$ which defines the extended subgraph $\widetilde{\mathcal{G}}_3 = \widetilde{\mathcal{G}}_3(a,b,c,d,e,f)$, we define the {\bf covering monomial for Model 3}
$m(\widetilde{\mathcal{G}}_3)$ as the product $x_1^{a_1}x_2^{a_2}x_3^{a_3}x_4^{a_4}x_5^{a_5}x_6^{3a_6+2b_6 + c_6}$ where $a_j$ is the number of faces labeled $j$ which lie fully inside the contour $\mathcal{C}_3(a,b,c,d,e,f)$, $b_6$ equals the number of octagonal faces labeled $6$ which partially live inside the contour as a hexagon, and $c_6$ equals the number of octagonal faces labeled $6$ which partially live inside the contour as a quadrilateral.
\end{definition}

We then compute the weight of the subgraph $\mathcal{G}_3 = \mathcal{G}_3(a,b,c,d,e,f)$ as
$$w(\mathcal{G}_3) = \sum_{M \in \mathcal{M}(\mathcal{G}_3)} w(M)$$
where $\mathcal{M}(\mathcal{G}_3)$ is the set of perfect matchings $M$ of subgraph $\mathcal{G}_3$, and the weight of a matching $M$ is the product
$w(M) = \prod_{e_{ij} \in M} \frac{1}{x_i x_j}$.
Here $e_{ij}$ is an edge in the perfect matching $M$ which borders the faces labeled $i$ and $j$.

A variant of Remark \ref{rem:CM-variant} applies in this case.  We break all octagonal faces labeled $6$ into quadrilaterals as illustrated in Figure \ref{fig:CM-quads3}.  Then the covering monomial is defined as the usual product of all $x_j$ where $j$ is the label of a quadrilateral face contained inside the contour.

\begin{figure}\centering
\includegraphics[width=1.5in]{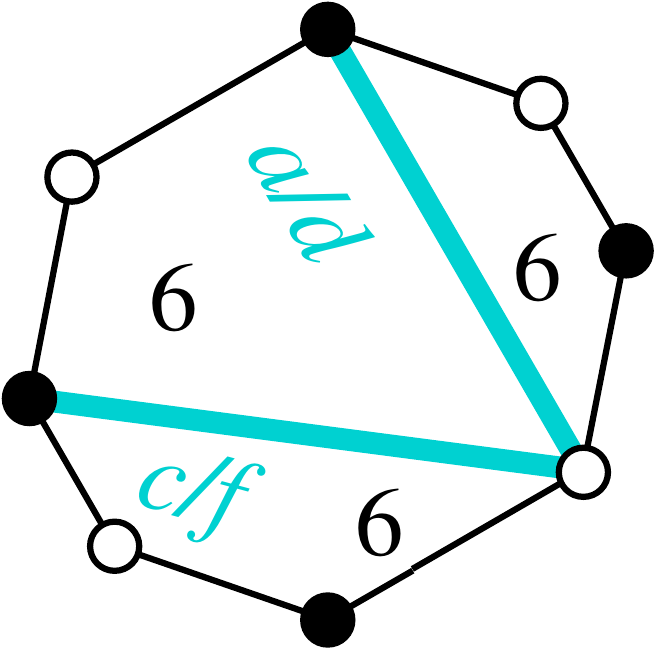}
\caption{Cutting Octagonal Face $6$ into Quadrilaterals in the Model 3 case.}
\label{fig:CM-quads3}
\end{figure}

\begin{theorem} \label{thm:formula3} Let $(a,b,c,d,e,f) \in \mathbb{Z}^6$ be the image $\phi(i,j,k)$ for $(i,j,k) \in \mathbb{Z}^3$.  Then as long as the contour $\mathcal{C}_3(a,b,c,d,e,f)$ has no self-intersections, then
$$z_{i,j,k}^{(3)} = m(\widetilde{\mathcal{G}}_3(a,b,c,d,e,f)) \cdot w(\mathcal{G}_3(a,b,c,d,e,f)).$$
\end{theorem}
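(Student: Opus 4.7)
The plan is to prove Theorem \ref{thm:formula3} by induction on toric mutation sequences starting from the initial seed $\Delta_3$, paralleling the strategy behind Theorem \ref{thm:formula1} in \cite{LaiMus}. First I would verify the six base cases of Figure \ref{fig:Model3initial}: for each initial contour $C_j^{(3)}$, directly compute $m(\widetilde{\mathcal{G}}_3(C_j^{(3)})) \cdot w(\mathcal{G}_3(C_j^{(3)}))$ and check that it equals $x_j$. Four of these subgraphs are empty, so only the covering monomial contributes; the remaining two consist of one or two quadrilateral faces whose perfect matchings are enumerated directly.

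For the inductive step, I would establish a graphical three-term identity, in the spirit of Kuo condensation, relating $m(\widetilde{\mathcal{G}}_3) \cdot w(\mathcal{G}_3)$ for a contour $\mathcal{C}_3(a,b,c,d,e,f)$ to the analogous quantities for the small number of neighboring contours obtained by the elementary local moves on $(a,b,c,d,e,f)\in\mathbb{Z}^6$ that correspond, via $\phi$, to the toric mutations on $(i,j,k)\in\mathbb{Z}^3$ depicted in Figure \ref{fig:ModelIII}. Matching this graphical identity with the binomial exchange relation at a toric vertex, and combining it with the algebraic formula of Theorem \ref{thm:z3}, would close the induction.

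A possibly cleaner alternative is to exploit the mutation sequence $\mu_4\mu_1$ that carries $Q_3$ to $Q_1$: at the level of brane tilings this is urban renewal $\mathcal{T}_3 \leadsto \mathcal{T}_2 \leadsto \mathcal{T}_1$. Under this move, each subgraph $\mathcal{G}_3(a,b,c,d,e,f)$ should correspond to an explicit subgraph $\mathcal{G}_1(a',b',c',d',e',f')$ whose weighted matching sum, after the substitutions $x_1 \mapsto Y_1'$ and $x_4 \mapsto Y_4'$ used in the proof of Theorem \ref{thm:z3}, matches $z_{i,j,k}^{(3)}$. This would reduce Theorem \ref{thm:formula3} to Theorem \ref{thm:formula1} together with bijective dimer identities arising from urban renewal (together with matching of covering monomials).

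The main obstacle I expect is combinatorial case analysis. The Model 3 contour has two qualitatively different side types (sides $b, e$ pass through an additional white vertex per unit length, unlike $a,c,d,f$); each entry of $(a,b,c,d,e,f)$ may be positive, negative, or zero; and Definition \ref{def:Model3subgraph} explicitly permits back-tracking between adjacent sides. Moreover, the covering monomial now distinguishes octagonal faces intersecting the contour as hexagons (counted by $b_6$) versus as quadrilaterals (counted by $c_6$), so the bookkeeping for $m(\widetilde{\mathcal{G}}_3)$ under each local move requires care. Verifying that the graphical identity (or the urban-renewal bijection) respects all of these bookkeeping conventions in every sub-case is where the bulk of the technical work will lie.
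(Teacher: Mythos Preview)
Your second approach---reducing to Theorem \ref{thm:formula1} via urban renewal along the mutation sequence $\mu_4, \mu_1$---is essentially the paper's proof. The paper organizes it as two successive steps: first Theorem \ref{thm:formula2} is proved by urban renewal at all faces labeled $1$ in $\mathcal{T}_1$ (together with collapsing the resulting $2$-valent vertices), and then Theorem \ref{thm:formula3} is proved by urban renewal at all faces labeled $4$ in $\mathcal{T}_2$. One small correction: the contour parameters $(a,b,c,d,e,f)$ do not change under these moves; the same six-sided contour is superimposed on the successive tilings, so there is no new $(a',b',c',d',e',f')$.

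You have also correctly anticipated the main technical point. After urban renewal at the faces labeled $4$ in $\mathcal{T}_2$, the resulting $2$-valent black vertices (those bordering faces $2$ and $3$) can lie \emph{on} the contour, precisely along sides $b$ and $e$, unlike in the Model $1\to 2$ step where all $2$-valent vertices were interior. The paper handles this by first collapsing the interior $2$-valent vertices and then treating the boundary ones according to the sign of $b$ (or $e$): when positive, the black vertex is deleted by Step 2 of Definition \ref{def:Model3subgraph}; when negative, it becomes $1$-valent and triggers a cascade of forced edges. In each case a direct boundary comparison (Figure \ref{fig:Mod2to3-boundaries}) shows the result coincides with applying Definition \ref{def:Model3subgraph} directly to $\mathcal{T}_3$. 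This is exactly the case analysis you flag, and it is the entire content of the proof beyond the standard urban-renewal matching identities and the accompanying covering-monomial bookkeeping.

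Your first approach (Kuo condensation and induction on toric mutations from $\Delta_3$) would also work in principle---it parallels the original proof of Theorem \ref{thm:formula1} in \cite{LaiMus}---but the paper does not take it, since reducing to the already-established Model $1$ result via two urban renewals avoids repeating that much longer induction.
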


\subsection{Model 4}

\begin{figure}\centering
\includegraphics[width=4in]{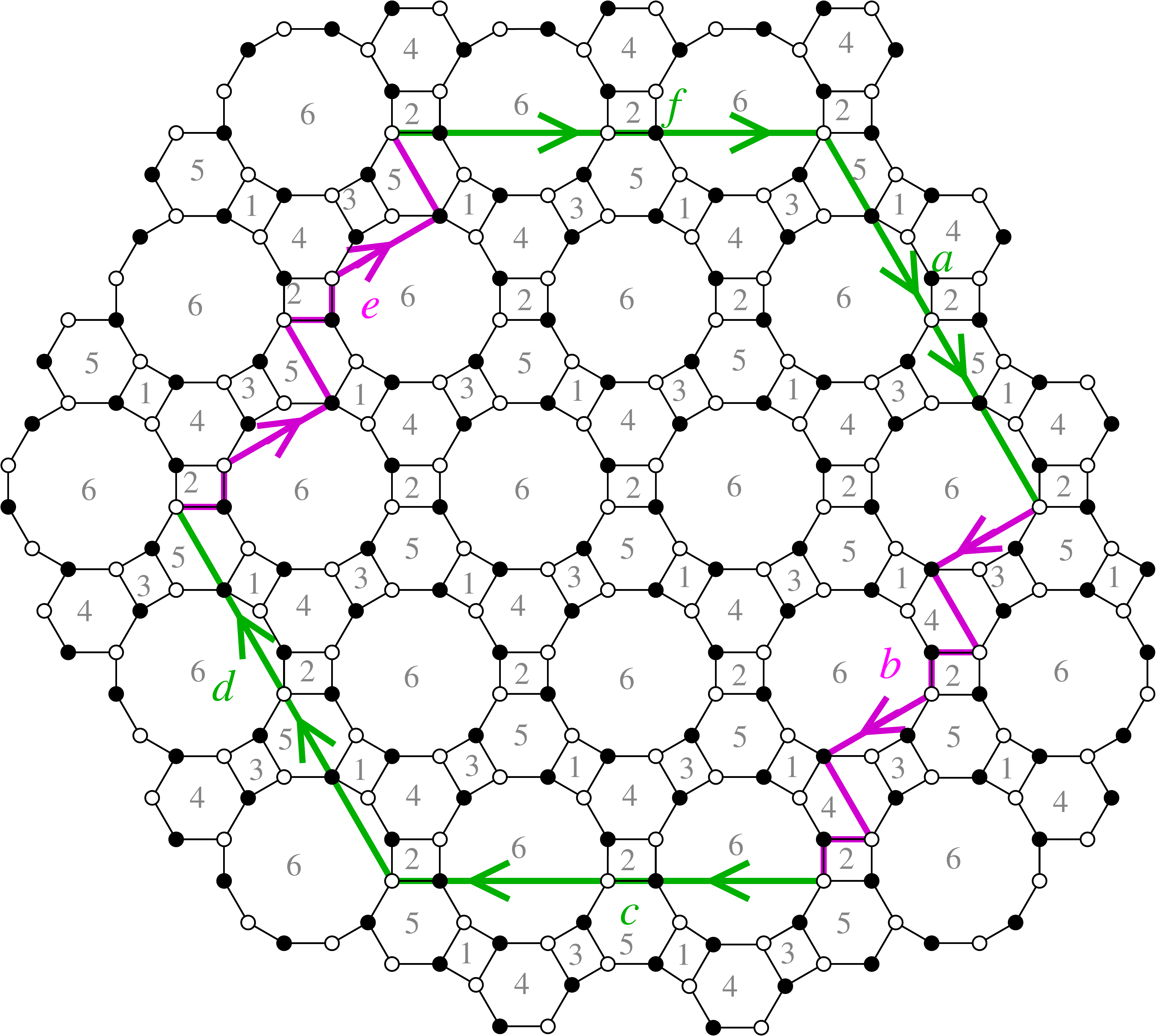}
\caption{Contour for $(Q_4,W_4)$, on top of the brane tiling $\mathcal{T}_4$, in the positive directions.  Segments of length two indicated.}
\label{fig:Model4-contour}
\end{figure}

For a given $(a,b,c,d,e,f) \in \mathbb{Z}^6$ in the image of this map $\phi$, we begin at one of the white vertices of degree $3$ in the brane tiling associated to $(Q_4,W_4)$ that borders the faces $2, 5,$ and $6$.  Analogous to the other models, this is a choice of vertex that is unique in the fundamental domain of this brane tiling.  We then follow the lines as illustrated in Figure \ref{fig:Model4-contour}.  Again, we have shown the orientations if the entry is positive.  For a negative entry, we instead traverse the indicated trajectory in the opposite direction.  The absolute value of an entry of this six-tuple indicates the length of the contour in that direction, where a segment of length one ends at the next translate of the initial vertex. Like the Model 3 case, more than one type of white vertex (and more than one type of black vertex) can appear along a segment of length one; in particular, along the sides labeled $b$ and $e$.  The other four sides behave just as in the Model 2 case.  We abbreviate this contour as $\mathcal{C}_4(a,b,c,d,e,f)$.  We translate these contours into subgraphs just as above.

\begin{definition} \label{def:contour4}
Suppose that the contour $\mathcal{C}_4(a,b,c,d,e,f)$ does not intersect itself (although back-tracking is allowed).  Under this assumption, we use the
contour $\mathcal{C}_4(a,b,c,d,e,f)$ to define two subgraphs, $\widetilde{\mathcal{G}}_4(a,b,c,d,e,f)$ and $\mathcal{G}_4(a,b,c,d,e,f)$, of the Model 4 brane tiling $\mathcal{T}_4$ by the following.

Step 1: Superimpose the contour $\mathcal{C}_4(a,b,c,d,e,f)$ onto $\mathcal{T}_4$ starting from a $3$-valent white vertex bordering the faces $2$, $5$, and $6$ as above.

Step 2: For any side of positive (resp. negative) length, we remove all black (resp. white) vertices along that side.

Step 3: A side of length zero corresponds to a single white vertex.  If one of the adjacent sides is of negative length or also of length zero, then that white vertex is removed during Step 2.  On the other hand, if the side of length zero is adjacent to two sides of positive length, we keep the white vertex. 

Step 4: We define $\widetilde{\mathcal{G}}_4(a,b,c,d,e,f)$ to be the resulting subgraph.  However, this subgraph will often include black vertices of valence one.  After matching these up with the appropriate white vertices and continuing this process until there are no vertices of valence one left, we are left with the simply-connected graph $\mathcal{G}_4(a,b,c,d,e,f)$.
\end{definition}

\begin{figure}\centering
\includegraphics[width=6in]{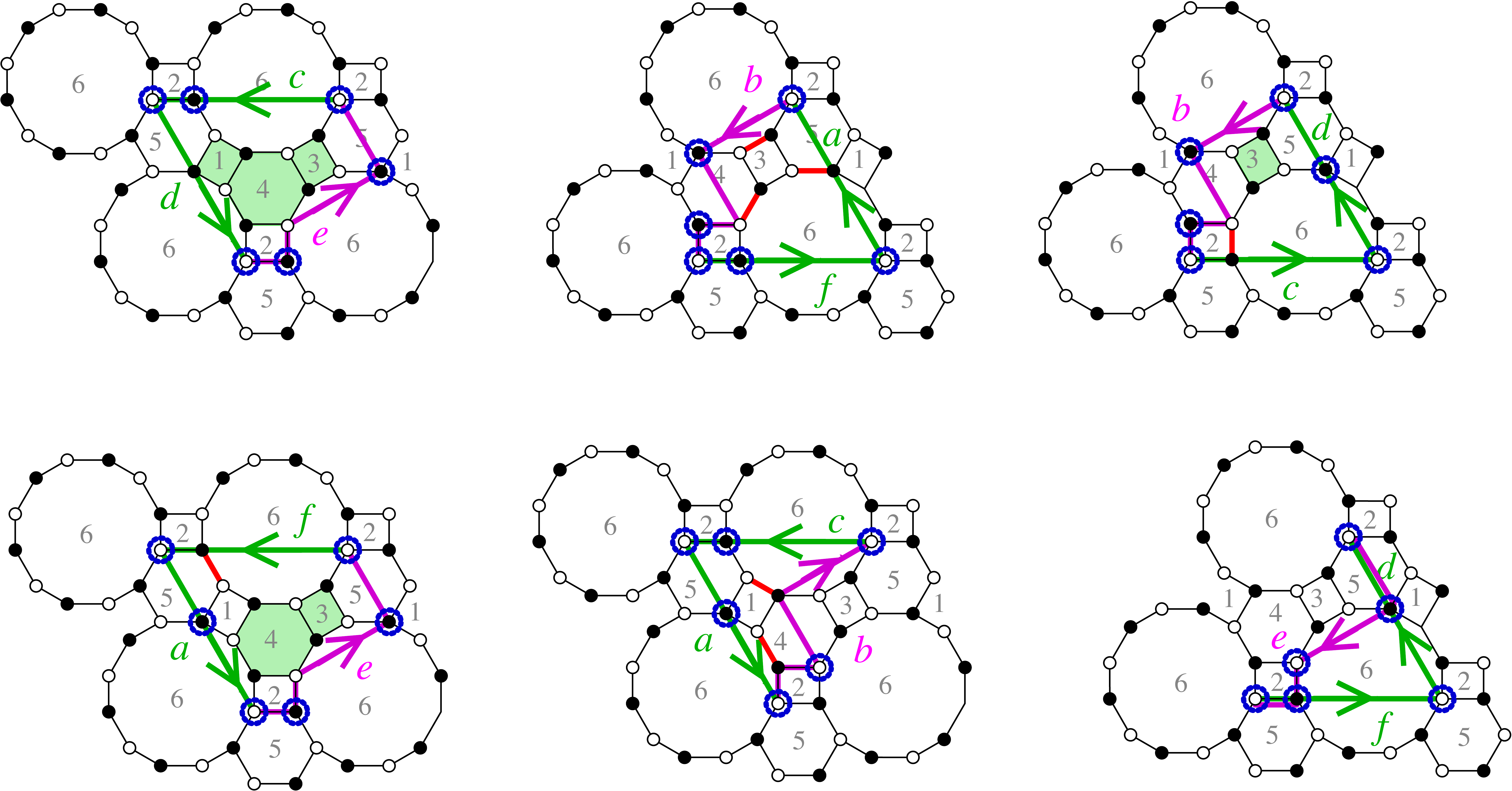}
\caption{Contours $C_1^{(4)},C_2^{(4)},\dots, C_6^{(4)}$, respectively for Model 4.  Blue sundials indicate vertices along the contours which should be removed.}
\label{fig:Model4initial}
\end{figure}

\begin{figure}\centering
\includegraphics[width=2in]{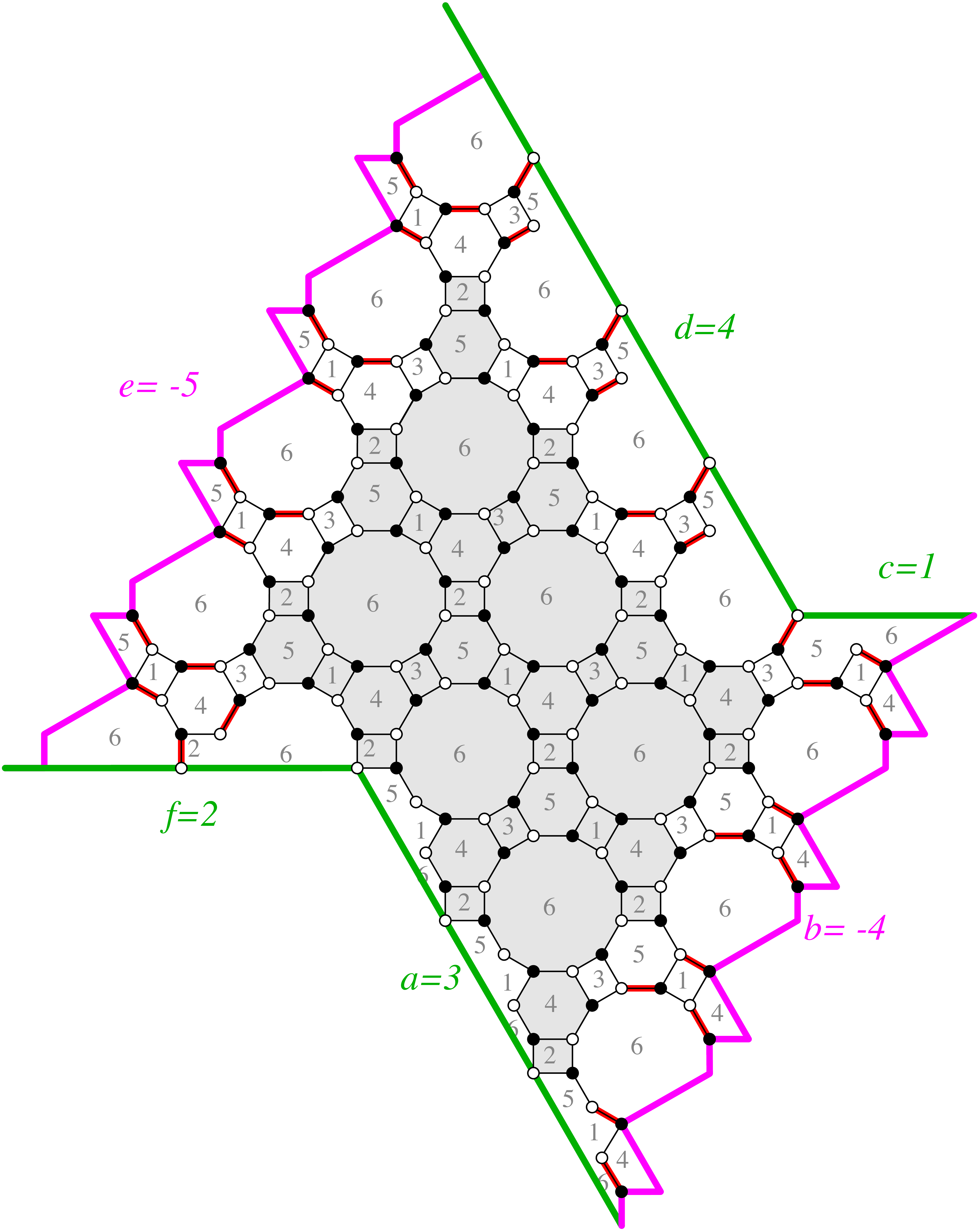} ~ \includegraphics[width=2.5in]{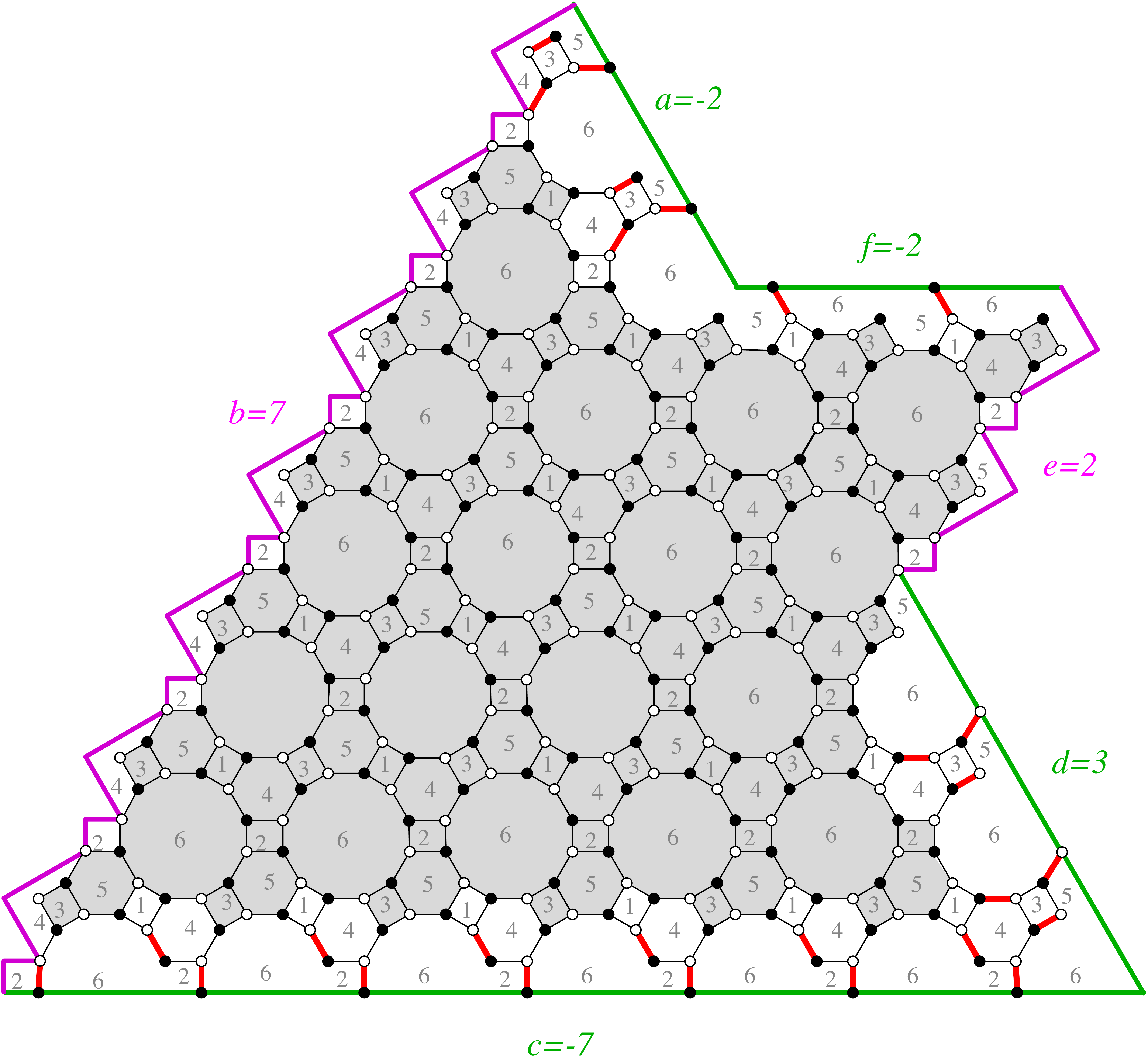} ~ \includegraphics[width=2in]{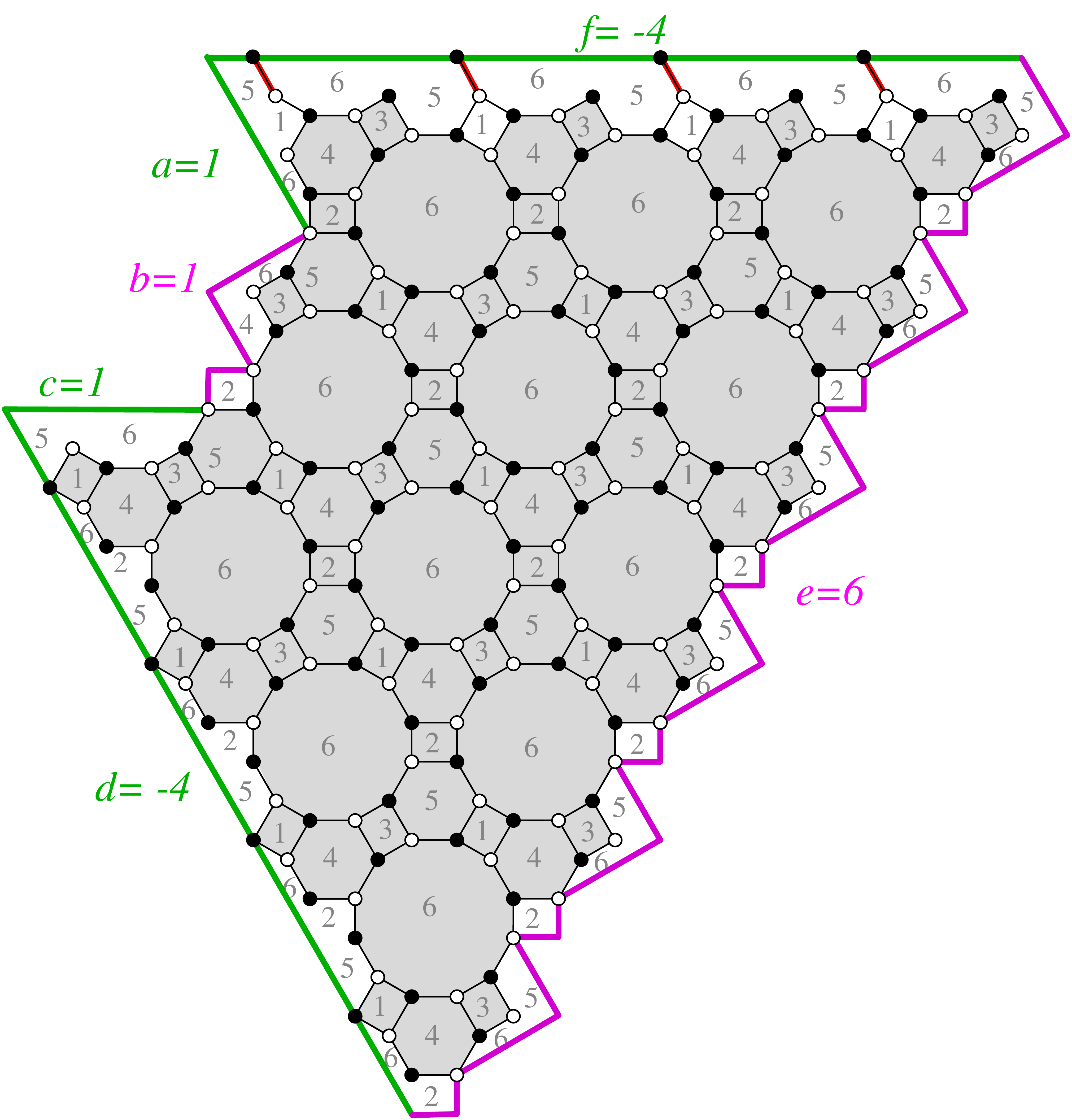}
\caption{Examples of larger contours and the corresponding subgraphs for Model 4.}
\label{fig:Model4egs}
\end{figure}

For the $6$-tuples corresponding to the image of $\phi(\Delta_1)$, we draw the initial contours $C_1^{(4)},C_2^{(4)},\dots, C_6^{(4)}$ and the associated subgraphs in Figure \ref{fig:Model4initial}.  In particular, the initial contour $C_1^{(4)}$ yields a subgraph $\mathcal{G}_4(0,0,1,-1,1,0)$ consisting of three connected quadrilaterals labeled $1$, $4$, and $3$, while the initial contour $C_4^{(4)}$ yields a subgraph $\mathcal{G}_4(1,0,0,0,1,-1)$ consisting of two connected quadrilaterals labeled $4$ and $3$, and $C_3^{(4)}$ yields $\mathcal{G}_4(0,1,-1,1,0,0)$ consisting solely of a quadrilateral labeled $3$.  The remaining initial contours yield empty subgraphs $\mathcal{G}_4$ although their extended subgraphs $\widetilde{\mathcal{G}}_4$ contain some edges incident to $1$-valent black vertices.  Note again that in the case of $C_6^{(4)}$, the white and black vertices are deleted along the sides of the contours, just as described by Steps 2 and 3, despite the fact that this contour involves some back-tracking between edges $d$ and $e$ as well as between edges $e$ and $f$.  Some further examples appear in Figure \ref{fig:Model4egs}.

We obtain Laurent polynomial expressions from the graphs $\mathcal{G}_4(a,b,c,d,e,f)$'s by the same process as above.  For this, we must extend our definition of covering monomials to the Model 4 case.  We now have dodecagonal faces and hexagonal faces, which the contours lines can cut through.  Again, the remaining faces are quadrilaterals and contours do not cut through such faces.
\begin{definition} \label{def:CM4}
Given a contour $\mathcal{C}_4(a,b,c,d,e,f)$ which defines the extended subgraph $\widetilde{\mathcal{G}}_4 = \widetilde{\mathcal{G}}_4(a,b,c,d,e,f)$, we define the {\bf covering monomial for Model 4}
$m(\widetilde{\mathcal{G}}_4)$ as the product $x_1^{a_1}x_2^{a_2}x_3^{a_3}x_4^{2a_4+e_4}x_5^{2a_5+e_5}x_6^{5a_6 + 4b_6 + 3c_6+2d_6 + e_6}$ where $a_j$ is the number of faces labeled $j$ which lie fully inside the contour $\mathcal{C}_4(a,b,c,d,e,f)$, $b_j$ (resp. $c_j$, $d_j$, $e_j$) equals the number of faces labeled $j$ which partially live inside the contour as a 10-gon (resp. octagon, hexagon, quadrilateral).
\end{definition}

We then compute the weight of the subgraph $\mathcal{G}_4 = \mathcal{G}_4(a,b,c,d,e,f)$ as
$$w(\mathcal{G}_4) = \sum_{M \in \mathcal{M}(\mathcal{G}_4)} w(M)$$
where $\mathcal{M}(\mathcal{G}_4)$ is the set of perfect matchings $M$ of subgraph $\mathcal{G}_4$, and the weight of a matching $M$ is the product
$w(M) = \prod_{e_{ij} \in M} \frac{1}{x_i x_j}$.
Here $e_{ij}$ is an edge in the perfect matching $M$ which borders the faces labeled $i$ and $j$.

A variant of Remark \ref{rem:CM-variant} again applies in this case.  We break all dodecagonal faces labeled $6$ and hexagonal faces labeled $4$ or $5$ into quadrilaterals as illustrated in Figure \ref{fig:CM-quads4}.  Then the covering monomial is defined as the usual product of all $x_j$ where $j$ is the label of a quadrilateral face contained inside the contour.

\begin{figure}\centering
\includegraphics[width=4.5in]{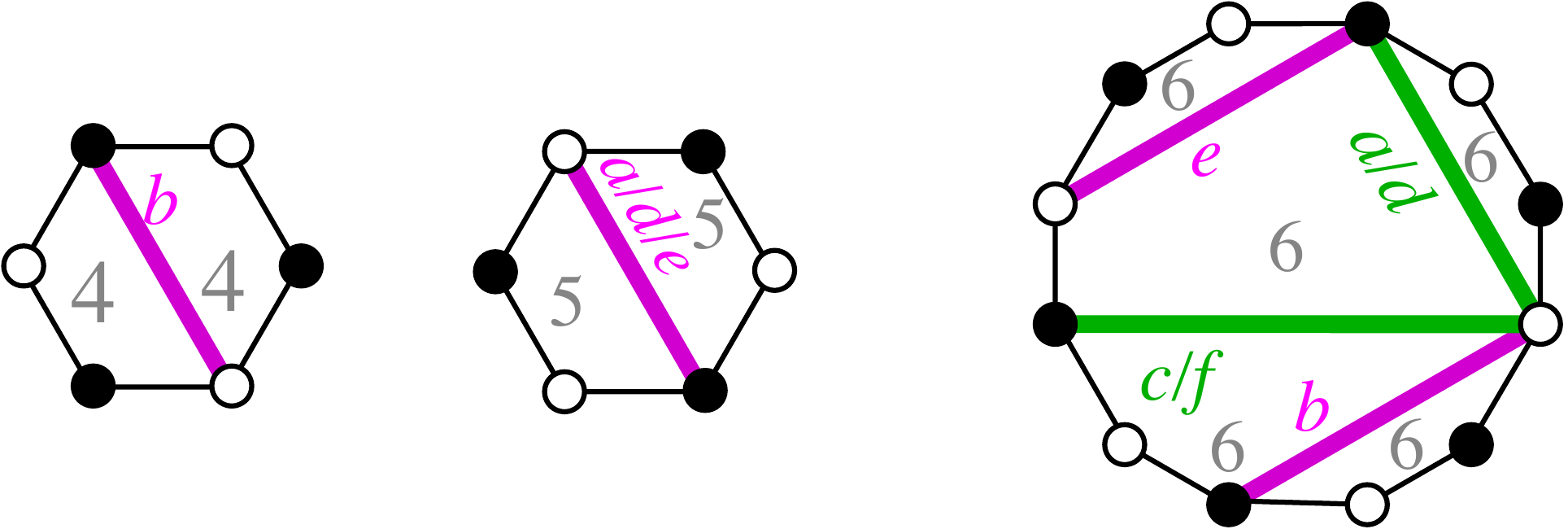}
\caption{Cutting Hexagonal Faces $4$ and $5$ and Dodecagonal Face $6$ into Quadrilaterals in the Model 4 case.}
\label{fig:CM-quads4}
\end{figure}

\begin{theorem} \label{thm:formula4} Let $(a,b,c,d,e,f) \in \mathbb{Z}^6$ be the image $\phi(i,j,k)$ for $(i,j,k) \in \mathbb{Z}^3$.  Then as long as the contour $\mathcal{C}_4(a,b,c,d,e,f)$ has no self-intersections, then
$$z_{i,j,k}^{(4)} = m(\widetilde{\mathcal{G}}_4(a,b,c,d,e,f)) \cdot w(\mathcal{G}_4(a,b,c,d,e,f)).$$
\end{theorem}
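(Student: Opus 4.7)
The plan is to prove Theorem \ref{thm:formula4} by induction on the toric mutation distance from the initial seed $\Delta_4$, closely mirroring the proof of Theorem \ref{thm:formula1} (Theorem 5.9 of \cite{LaiMus}). The induction parameter is the length of the shortest sequence of toric mutations taking the seed parameterized by $(i,j,k) \in \mathbb{Z}^3$ back to $\Delta_4$.

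For the base cases, one verifies for each $r \in \{1,2,\ldots,6\}$ that the $6$-tuple $C_r^{(4)}$ arising from $\phi(\Delta_4)$ (illustrated in Figure \ref{fig:Model4initial}) produces an extended subgraph $\widetilde{\mathcal{G}}_4(C_r^{(4)})$ and core subgraph $\mathcal{G}_4(C_r^{(4)})$ satisfying $m(\widetilde{\mathcal{G}}_4(C_r^{(4)})) \cdot w(\mathcal{G}_4(C_r^{(4)})) = x_r$. This reduces to a finite inspection: for instance, $C_1^{(4)}$ yields three linked quadrilaterals labeled $1, 4, 3$ whose unique perfect matching, multiplied by the covering monomial encoding the enclosed (or partially enclosed) faces of $\mathcal{T}_4$, must collapse to $x_1$, with analogous checks for $C_3^{(4)}$, $C_4^{(4)}$ and the three cases $C_2^{(4)}, C_5^{(4)}, C_6^{(4)}$ yielding empty $\mathcal{G}_4$.

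For the inductive step, each toric mutation $\mu_v$ induces a cluster exchange relation of the shape $z_{i,j,k}^{(4)} \cdot z_{i',j',k'}^{(4)} = z_\alpha^{(4)} z_\beta^{(4)} + z_\gamma^{(4)} z_\delta^{(4)}$. Under the parametrization $\phi$, these six cluster variables correspond to six explicit contours, and hence six explicit subgraphs of $\mathcal{T}_4$. Applying the induction hypothesis to the five already-established terms reduces the exchange relation to a weighted-perfect-matching identity among these six subgraphs of $\mathcal{T}_4$, which is precisely of the type established by Kuo's graphical condensation and its variants. The covering monomials on both sides of the identity are then matched by a direct face-by-face count, using that the six contours in the identity differ from one another only by a controlled local change of enclosed face content near the mutated vertex, so that the ratios of the covering monomials reproduce exactly the monomial ratios needed to balance the identity with the cluster exchange relation.

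The main obstacle will be the elaborate face structure of $\mathcal{T}_4$. Unlike the essentially triangular $\mathcal{T}_1$, the tiling $\mathcal{T}_4$ contains a dodecagonal face labeled $6$ and hexagonal faces labeled $4$ and $5$, and contours may slice through each of these large faces in several qualitatively distinct ways — which is exactly why Definition \ref{def:CM4} requires the five separate exponents $a_j, b_j, c_j, d_j, e_j$. The quadrilateral-subdivision viewpoint of Figure \ref{fig:CM-quads4} is the essential bookkeeping device: once each dodecagon is presented as the union of three quadrilaterals and each large hexagon as two quadrilaterals, the covering monomial reduces to the standard product over enclosed quadrilaterals, and the Kuo condensation step then proceeds by the same local analysis as in the Model 1 setting, with the additional case work confined to how contours cross the subdivided large faces. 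Combined with the base cases and the fact that every $(i,j,k)$ is reachable from $\Delta_4$ by a sequence of toric mutations, this completes the induction.
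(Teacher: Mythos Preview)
Your approach differs substantially from the paper's. The paper does not set up a fresh Kuo-condensation induction on $\mathcal{T}_4$; rather, it derives Theorem~\ref{thm:formula4} from Theorem~\ref{thm:formula3} by a single global urban renewal at all faces labeled $3$, which turns $\mathcal{T}_3$ into $\mathcal{T}_4$ (and here no $2$-valent vertices appear, so this step is even shorter than the Model~2 and Model~3 transitions). The whole chain is Model~1 $\to$ Model~2 $\to$ Model~3 $\to$ Model~4, each arrow an urban renewal plus covering-monomial bookkeeping, so the substantial Kuo-condensation case analysis is done exactly once, in \cite{LaiMus}, on the simplest of the four tilings. Your plan instead reproves everything directly on the most complicated tiling; this is viable in principle but buys nothing and costs a good deal of extra casework.

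More seriously, your base case is wrong as stated. The contours $C_r^{(4)}$ of Figure~\ref{fig:Model4initial} are the images under $\phi$ of $\Delta_1$, not of $\Delta_4$ (the two prisms differ in their first, third, and fourth entries). In particular $C_1^{(4)}=\mathcal{C}_4(0,0,1,-1,1,0)$ corresponds to $(i,j,k)=(0,-1,1)\in\Delta_1$, and the associated three-quadrilateral subgraph has \emph{five} perfect matchings, not one: the formula there produces
\[
z^{(4)}_{0,-1,1}=Y_1''=\frac{x_6^4+x_1x_2x_3x_6+2x_4x_5x_6^2+x_4^2x_5^2}{x_1x_3x_4},
\]
not $x_1$. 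Likewise $C_3^{(4)}$ and $C_4^{(4)}$ yield $Y_3''$ and $Y_4''$. If you insist on anchoring the induction at $\Delta_4$, you must work with the genuinely different contours $\phi(-1,1,0)=(1,0,-1,2,-1,0)$, $\phi(1,0,0)=(0,-1,1,1,-2,2)$, and $\phi(0,0,-1)=(-1,1,-1,2,-2,2)$, check that none self-intersect, and verify that each returns the bare variable $x_r$. If instead you anchor at the $\Delta_1$-seed, the correct base-case targets are the $Y_r''$, and the passage from that seed back to the $Q_4$ initial seed is precisely the sequence of three urban renewals that the paper uses --- so you would be re-deriving the paper's argument inside your ``base case'' anyway.
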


\section{Combinatorial History and its Relation to the Current Work}

\label{sec:history}

In 1999, Jim Propp published an article \cite{Propp} tracking the progress on a list of 20 open
problems in the field of exact enumeration of perfect matchings, which he presented
in a lecture in 1996, as part of the special program on algebraic combinatorics
organized at MSRI during the academic year 1996--1997. The article also presented
a list of 12 new open problems. In a review on MathSciNet of the American Mathematical Society (AMS), Christian Krattenthaler (University of Vienna) wrote about this list of 32 problems: ``\emph{This list of problems was very influential; it called forth tremendous activity, resulting in the solution of several of these problems (but by no means all), in the development of interesting new techniques, and, very often, in results that move beyond the problems.}"

On this list of problems, Jim Propp posed a number of analogues of the well-known Aztec Diamond (see Figure \ref{fig:AztecDiamond}) in different lattices, including the \emph{Aztec Dragon} and the \emph{Aztec Dungeon} (see Problem 15 on the list).
In most of the cases, the regions described in Propp's article yield a simple product formula for the number of tilings, in particular a perfect power of small prime numbers.

\begin{figure}\centering
\includegraphics[width=4.5in]{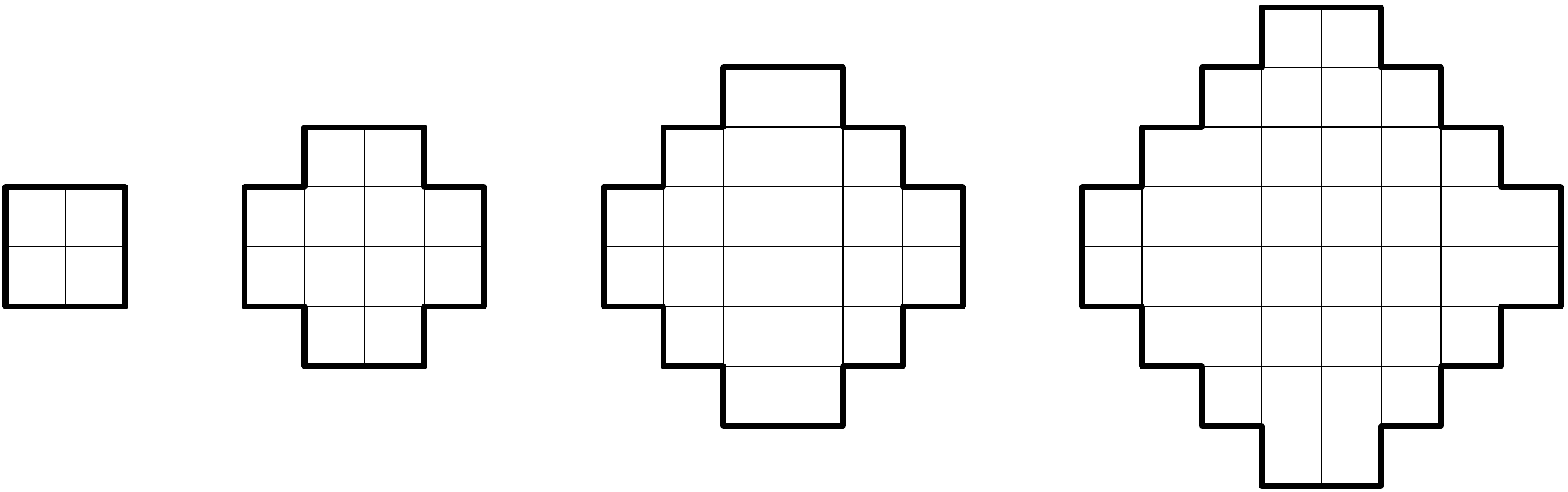}
\caption{The Aztec diamond regions of order $1,2,3,4$ (from left to right).}
\label{fig:AztecDiamond}
\end{figure}

\begin{figure}\centering
\includegraphics[width=4.5in]{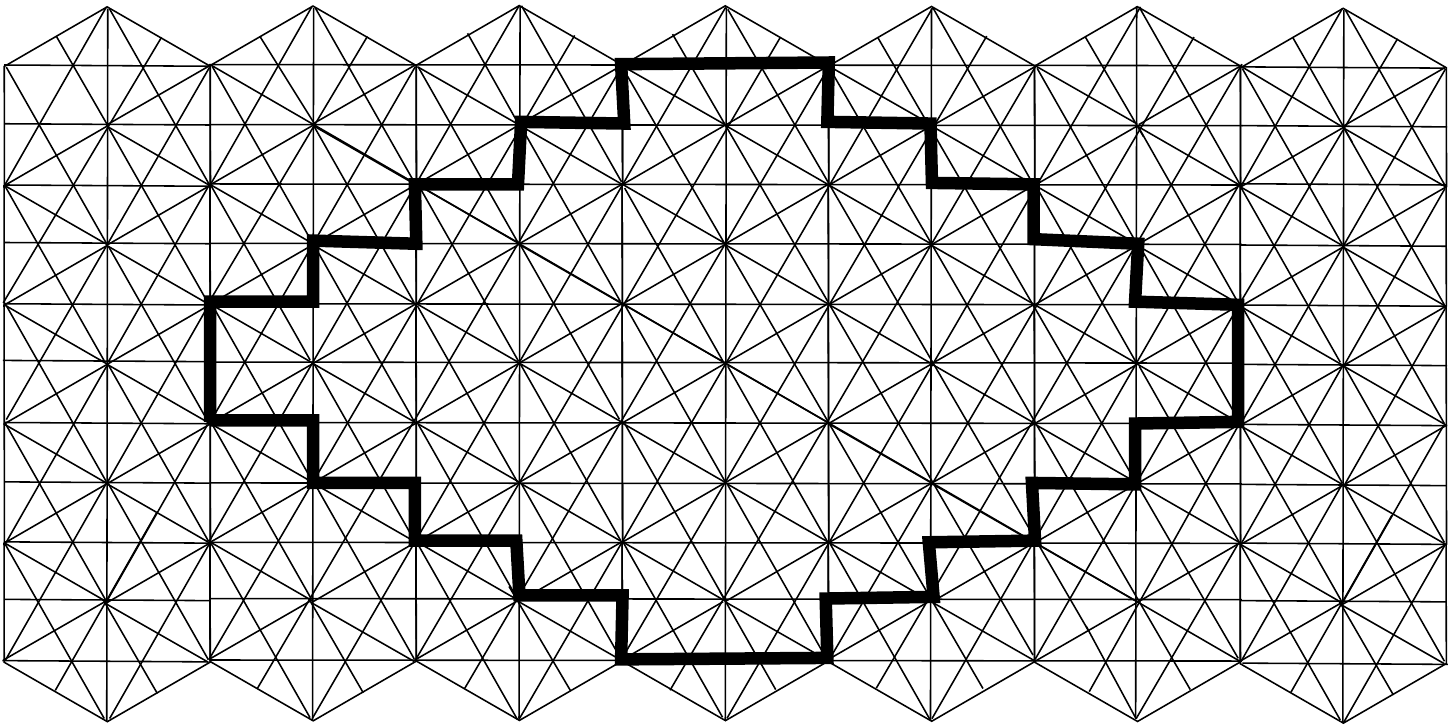}
\caption{The Aztec Dungeon of order 5; Figure 3.6 in \cite{lai'}.}
\label{fig:AztecDungeon}
\end{figure}

\begin{figure}\centering
\includegraphics[width=6in]{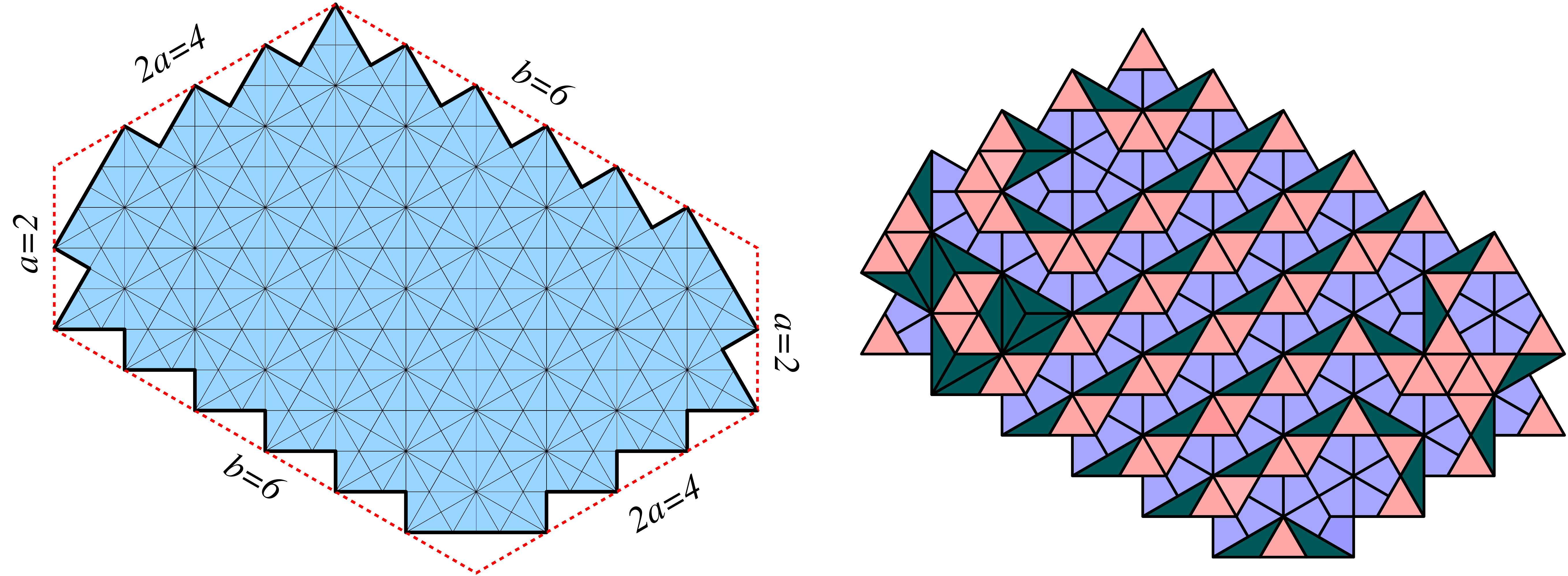}
\caption{A hexagonal dungeon region (Left) and a tiling of its (Right).}
\label{fig:HexagonalDungeon}
\end{figure}

The Aztec Dungeon is a diamond-like region on the $G_2$ lattice (the lattice corresponding with the affine Coxeter group $G_2$). See Figure \ref{fig:AztecDungeon}. Jim Propp conjectured that the number of tilings of an Aztec Dungeon is always a power of $13$ or twice a power of $13$. The conjecture was proved by Mihai Ciucu \cite{Ciucu} by using a linear algebraic version of urban renewal, and later by Kokhas \cite{Kokhas} by using Kuo condensation.
Inspired by Aztec Dungeons, Matt Blum introduced a hexagonal counterpart of them called \emph{Hexagonal Dungeons}. See Figure \ref{fig:HexagonalDungeon}.  He found a striking pattern in the tilings of the Hexagonal Dungeon and conjectured that the number of tilings of a Hexagonal Dungeon is always given by a product of a power of 13 and a power of 14. This conjecture was listed as Problem  25 on the list of Propp. 

The 32 problems  on the list can be divided into $3$ different categories: conjectures stating an explicit formula for the number of tilings (resp., perfect matchings) of the specific family of regions (resp., graphs), problems for which the number of tilings (resp., perfect matchings) does not seem to be given by a simple formula, but presents some nice patterns that are required to be proved, and problems concerned with various aspects of the Kasteleyn matrices of the involved graphs, and not directly with their number of perfect matchings.

To enumerative combinatorialists, the most compelling ones to prove are the problems in the first category. Actually, all of problems in this category were proved shortly after Jim Propp posed them. After few years since the list was published, the only still-open problem in the first category was Blum's conjecture, see Theorem \ref{thm:Blum}. Fourteen years later, the conjecture was proved in 2014 by Ciucu and the first author \cite{lai'} by an application of Kuo condensation to two larger families of six-sided regions. As we explain below in Section \ref{sec:Mod4}, these two larger families correspond to special cases of the subgraphs constructed in the case of Model 4. The first author latter proved a refinement of the conjecture in \cite{lai}. The proof of Blum's conjecture demonstrated that Kuo condensation is a powerful tool in the study of tilings. After this, a number of hard problems in the field of enumeration of tilings have been cracked by using Kuo condensation (we refer the reader to e.g. \cite{CF,CL19,KW,Tri1,Tri2,Halfhex1,Halfhex2, Minor} for recent applications of the method).  

\begin{theorem}[Corollary 3.2 \cite{lai'}; Blum's (ex-)conjecture \cite{Propp}] \label{thm:Blum}
The hexagonal dungeon with sides $a, 2a, b, a, 2a, b$ has
exactly \[13^{2a^2}14^{\lfloor a^2/2\rfloor}\] tilings, for all $b\geq 2a$.
\end{theorem}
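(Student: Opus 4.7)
The plan is to derive Theorem \ref{thm:Blum} from the Model 4 framework assembled in Section \ref{sec:comb}. Writing $\alpha, \beta$ for the dungeon's two side-length parameters (to avoid clashing with the contour tuple $a, \dots, f$), the first step is to identify, for each $\alpha \geq 1$ and each $\beta \geq 2\alpha$, a triple $(i,j,k) \in \mathbb{Z}^3$ whose image $\phi(i,j,k) = (a,b,c,d,e,f)$ gives a non-self-intersecting contour $\mathcal{C}_4(a,b,c,d,e,f)$ whose core subgraph $\mathcal{G}_4(a,b,c,d,e,f)$ from Definition \ref{def:contour4} is exactly the bipartite graph dual to the hexagonal dungeon with sides $(\alpha, 2\alpha, \beta, \alpha, 2\alpha, \beta)$. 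Concretely, one should choose the contour parameters so that the six sides of $\mathcal{C}_4$ match this alternating pattern (up to signs), verify that the resulting region has the $G_2$-type silhouette depicted in Figure \ref{fig:HexagonalDungeon}, and confirm that the one-valent pruning of Step 4 produces the dungeon's dual graph.

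Once this matching of parameters is in place, Theorem \ref{thm:formula4} identifies the unweighted matching count $|\mathcal{M}(\mathcal{G}_4)|$ with the specialization $z_{i,j,k}^{(4)}\big|_{x_1 = \cdots = x_6 = 1}$, since both the edge weights $\tfrac{1}{x_i x_j}$ and the covering monomial collapse to $1$ under that substitution. Evaluating the five building blocks of Theorem \ref{thm:z4} at $x_1 = \cdots = x_6 = 1$ yields the striking equalities
\[
A = 1, \quad B = 1, \quad D = 1, \quad C = 13, \quad E = 14,
\]
which already explain the appearance of $13$ and $14$ in Blum's formula. The specialized formula of Theorem \ref{thm:z4} therefore collapses to
\[
z_{i,j,k}^{(4)}\big|_{x_\bullet = 1} \;=\; Y_r''\big|_{x_\bullet = 1} \cdot 13^{\lfloor (i^2 + ij + j^2 + 1)/3 \rfloor} \cdot 14^{\lfloor k^2/4 \rfloor}.
\]
It then suffices to verify that for the triples $(i,j,k)$ matched to the dungeon, the residue $2(i-j) + 3k \pmod{6}$ lands in a class with $Y_r''\big|_{x_\bullet = 1} = 1$ (namely $r \in \{2, 5, 6\}$, since $Y_1'', Y_3'', Y_4''$ specialize to $5, 2, 3$ respectively), and that the two floor exponents reduce to $2\alpha^2$ and $\lfloor \alpha^2/2 \rfloor$. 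In particular, both exponents must be independent of $\beta$, reflecting the fact that enlarging $\beta$ past $2\alpha$ only adds a forced strip whose matchings contribute a trivial factor.

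The main obstacle will be the first step: pinning down the correct triples $(i,j,k)$ in terms of $(\alpha, \beta)$. The contour generators of Figure \ref{fig:Model4-contour} pass through white vertices of valences $3$, $4$, and $5$, and the dungeon's characteristic $G_2$ tile shapes emerge only after the pruning of Step 4 in Definition \ref{def:contour4}, so the correspondence is less transparent than in the Model 1 setting. Should a direct identification of the dungeon inside $\mathcal{T}_4$ prove delicate, a fallback is to emulate the Ciucu--Lai proof of \cite{lai'}: embed the dungeon in two larger families of six-sided regions --- which by the discussion to appear in Section \ref{sec:Mod4} are themselves Model 4 subgraphs --- and derive linear recurrences for their matching counts via Kuo condensation. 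The cluster-algebraic avatar of such a condensation identity is the toric mutation exchange relation at a vertex of $Q_4$, and matching the resulting recurrences with the recurrence satisfied by $13^{2\alpha^2}\, 14^{\lfloor \alpha^2/2 \rfloor}$, together with the easily checked base cases $\alpha \in \{0, 1\}$, closes the induction on $\alpha$.
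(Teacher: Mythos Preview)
Your primary route---finding a single triple $(i,j,k)$ so that $\mathcal{G}_4(\phi(i,j,k))$ \emph{is} the dual graph of the dungeon---cannot succeed. A numerical obstruction already kills it: you need the exponent of $14$, namely $\lfloor k^2/4\rfloor$, to equal $\lfloor \alpha^2/2\rfloor$, but for $\alpha=4$ this asks for an integer $k$ with $32\le k^2\le 35$, and there is none (likewise $\alpha=6,7,\dots$ fail). So no toric cluster variable $z_{i,j,k}^{(4)}$ specializes to $13^{2\alpha^2}14^{\lfloor\alpha^2/2\rfloor}$ for those $\alpha$, regardless of how you arrange the prefactor $Y_r''$. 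The underlying reason is that the hexagonal dungeon lives on the $G_2$ lattice, and its dual is not one of the contour-subgraphs of $\mathcal{T}_4$ cut out by $\phi(\mathbb{Z}^3)$; what Section~\ref{sec:Mod4} identifies with Model~4 core subgraphs are the auxiliary $D$- and $E$-regions of \cite{lai'}, not the dungeon itself.

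This is also why the paper does not give its own proof of Theorem~\ref{thm:Blum}: the statement is quoted as Corollary~3.2 of \cite{lai'}. What the present paper contributes (Section~\ref{sec:Mod4}) is a re-derivation of the $D$- and $E$-region tiling formulas, i.e.\ Theorem~3.1 of \cite{lai'}, by matching those regions with Model~4 contours and specializing Theorems~\ref{thm:z4} and~\ref{thm:formula4}. The passage from those formulas to the dungeon count is the extra step carried out in \cite{lai'} and is not reproduced here. Your fallback plan---embed the dungeon in the $D$/$E$ families and run Kuo condensation---is exactly that extra step, so it is a correct outline, but it is the cited argument rather than something supplied by the Model~4 machinery of this paper.
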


\begin{figure}\centering
\includegraphics[width=4.5in]{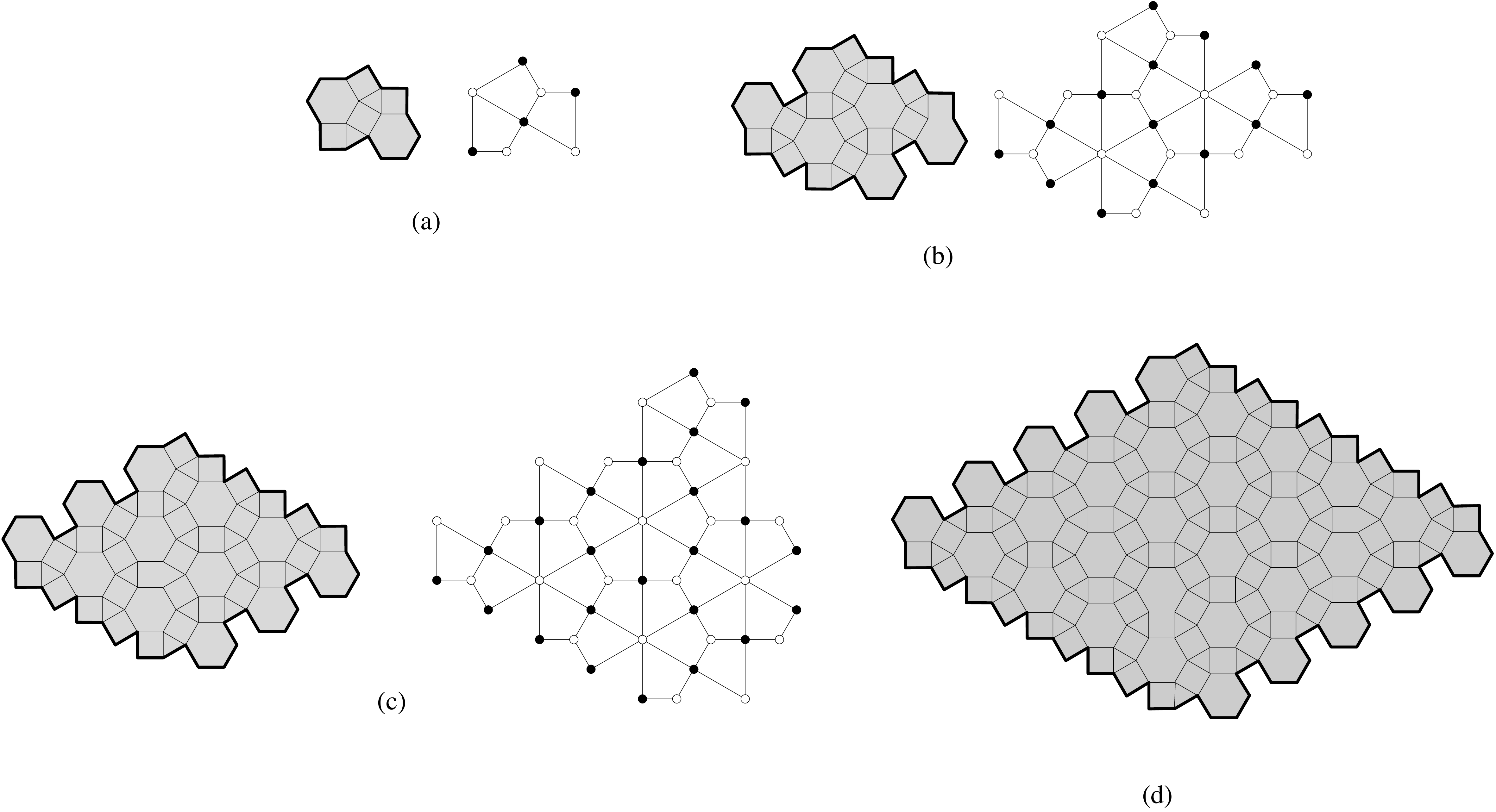}
\caption{ (a)--(c) Aztec Dragon regions (Left) of order $1,2,3$ and their dual graphs (Right). (d) Aztec Dragon region of order 5.}
\label{fig:AztecDragon}
\end{figure}

Originally considered as a separated family of regions from the above Aztec Dungeon, Jim Propp also conjectured that any Aztec Dragon has tilings enumerated by a power of 2. See Figure \ref{fig:AztecDragon}.  This was first proved by Ben Wieland in an unpublished  work (as announced in \cite{Propp}) and by Ciucu in \cite{Ciucu}, using urban renewal. The region was also considered by C. Cottrell and B. Young, in which the Aztec Dragon of half-integer order was introduced \cite{CY}. Sicong Zhang studied the weighted enumeration of Aztec Dragons (of both integer and half-integer order), and their relation to cluster algebras, as part of the 2012 REU\footnote{Research Experiences for Undergraduates (REU) is an NSF-funded program. See more details about this NSF program in the link \url{https://www.nsf.gov/crssprgm/reu/}, and about the REU program in combinatorics at University of Minnesota in the link \url{http://www-users.math.umn.edu/~reiner/REU/REU.html}.} at University of Minnesota \cite{zhang}.

 It turned out that the Aztec Dragons (of integer and half-integer orders) are only a special case of larger families that were found independently by the first author \cite{LaiNewDungeon, lai} in his effort to find similar regions to that in his proof of Blum's conjecture, and by the second author in joint work with 2013 REU   students on combinatorial interpretations of cluster algebras \cite{LMNT} (extending Zhang's work).  These families of regions were generalized even further to our family of subgraphs in the Model 1 case, as fully described in \cite{LaiMus}.

In 1999, Ciucu, when applying urban renewal to the dual graph of the hexagonal dungeon, found an interesting family of Aztec rectangles with two corners cut off called \emph{trimmed Aztec rectangles} \cite{Ciucutrim}.  See Figure \ref{fig:TrimmedAztecRectangle}.  He conjectured that the number of perfect matchings of a trimmed Aztec rectangle is given by perfect powers of prime numbers less than 13. The first author  \cite{trim} proved the conjectures by enumerating matchings of six families of six-sided subgraphs of the square grid. Those graphs are special cases of our subgraphs in Model 3, see Section \ref{sec:Mod3}.

In summary, three different looking problems in enumerative combinatorics can all be viewed as consequences of our theorems giving combinatorial interpretations to toric cluster variables for the $dP_3$ quiver.  Furthermore, we obtain certain $6$-variate weighted deformations of these tiling enumerations as part of this approach.  We now proceed to prove the combinatorial interpretations stated in Section \ref{sec:comb}.

\begin{figure}\centering
\includegraphics[width=4in]{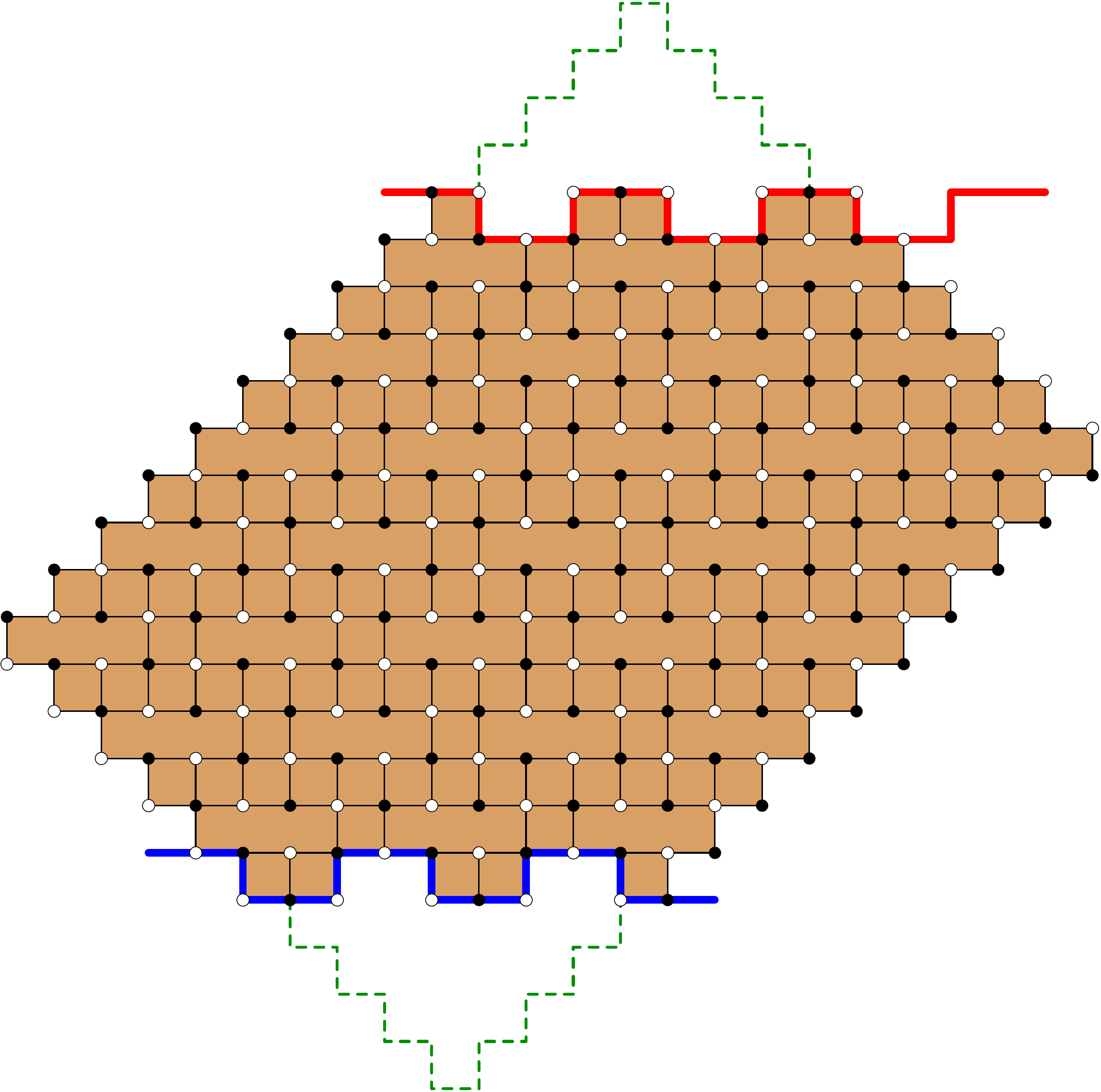}
\caption{A trimmed Aztec rectangle in \cite{trim}.}
\label{fig:TrimmedAztecRectangle}
\end{figure}

\section{Proof of Theorems \ref{thm:formula2}, \ref{thm:formula3}, and \ref{thm:formula4}  }

\label{sec:proofs}

The proofs of these three theorems hinge on the fact that mutations connecting together the quivers $Q_1$, $Q_2$, $Q_3,$ and $Q_4$ also have predictable effects on the brane tilings $\mathcal{T}_1$, $\mathcal{T}_2$, $\mathcal{T}_3$, and $\mathcal{T}_4$.  Namely each such mutation at a toric vertex corresponds to an \emph{urban renewal} transformation or \emph{spider move} applied to the corresponding brane tiling.  See \cite{GK, kuo, speyer}. 
By keeping track of how a contour superimposed ontop of the brane tiling transforms under urban renewal, we are able to obtain combinatorial interpretations that complement our above algebraic formulas.  Our arguments compare with that in Section 4 of \cite{speyer} except for some nuances, concerning vertices traversed by the contours, which we highlight as they arise.
 
 We begin with the proof of Theorem \ref{thm:formula2}, which is a combinatorial formula for cluster variables starting from the Model 2 quiver.  As we saw in the proof of the algebraic formula, Theorem \ref{thm:z2}, we deduce Theorem \ref{thm:formula2} by applying a transformation to the Model 1 quiver and its brane tiling.  Recall that if we begin with the quiver $Q_1$, the initial cluster $\{x_1,x_2,x_3,x_4,x_5,x_6\}$ and the initial brane tiling $\mathcal{T}_1$ as in Figure \ref{fig:quiv_brane} (Right), then mutating at vertex $1$ yields the new quiver $Q_2$ and the new cluster
$\{Y_1,x_2,x_3,x_4,x_5,x_6\}$ with $Y_1 = \frac{x_4x_6+x_3x_5}{x_1}$ (and updating expressions $A$, $B$, $C$, $D$ and $E$ appropriately).  Hence, we obtain the formula for $z_{i,j,k}^{(2)}$ by taking the formula for $z_{i,j,k}^{(1)}$ and substituting in $x_1$ by $Y_1$.  Since vertex $1$ is a toric vertex of $Q_1$, it follows that face $1$ is a quadrilateral of $\mathcal{T}_1$ and that this mutation corresponds to urban renewal on these faces. This transformation pushes the four corners of the quadrilateral inwards, reverses their colors, and produces four new diagonal edges connecting the new vertices to the old ones.  Figure \ref{fig:Model1to2} illustrates the result of urban renewal applied to $\mathcal{T}_1$ simultaneously to all quadrilaterals labeled $1$.

\begin{figure}\centering
\includegraphics[width=4in]{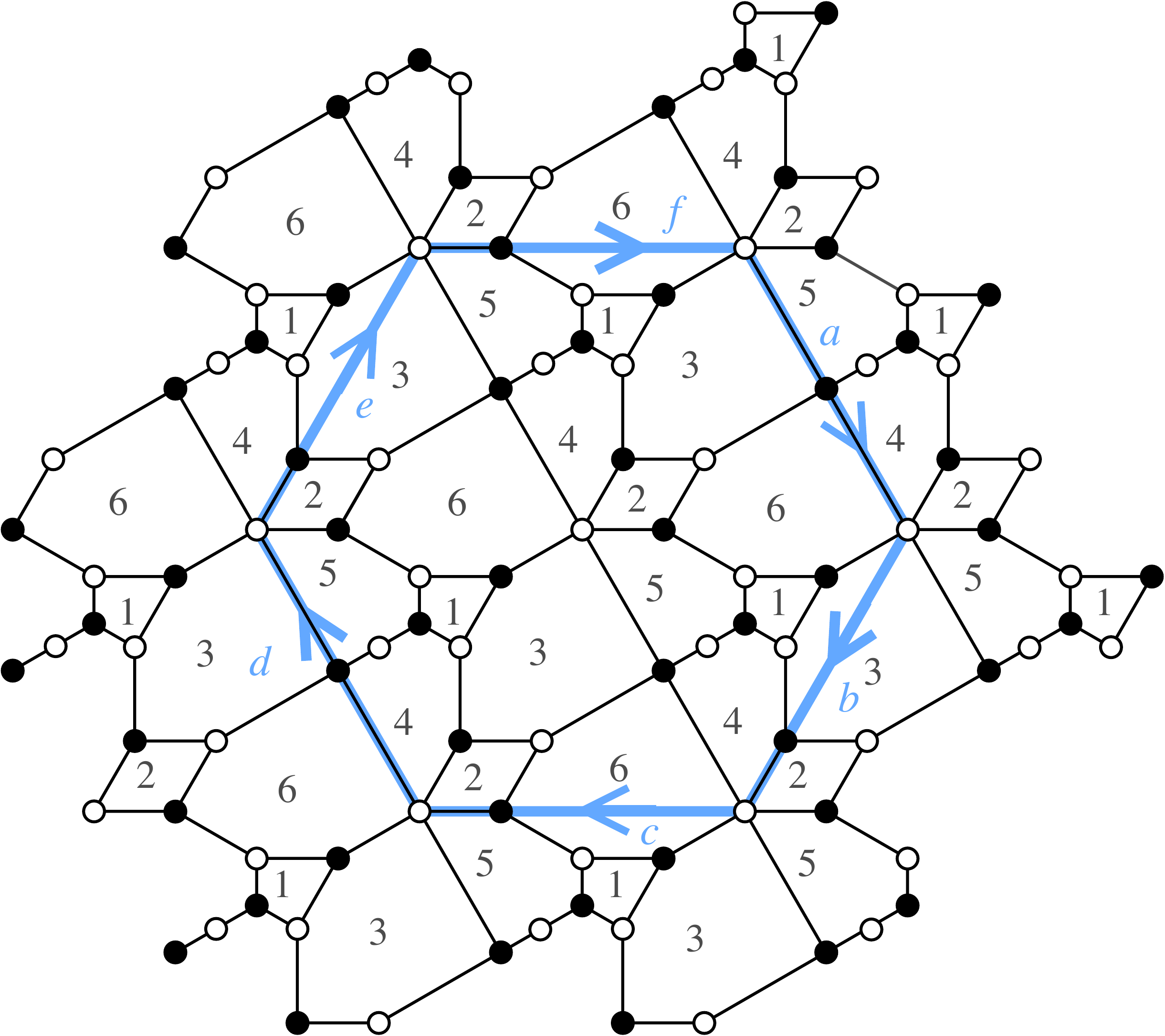}
\caption{Urban Renewal applied to the Model 1 brane tiling $\mathcal{T}_1$ at faces labeled $1$.}
\label{fig:Model1to2}
\end{figure}

Given a toric cluster variable starting from Model 1 whose combinatorial formula is given by a contour without self-intersections, we superimpose the associated contour on the tiling of Figure \ref{fig:Model1to2}.  In this figure, we also draw the unit hexagonal contour for comparison's sake.  We will argue that a subgraph cut out by such a contour has exactly the same weighted partition function of perfect matchings, except that all instances of $x_1$ are replaced with $Y_1$.  We accomplish this by explaining that urban renewal not only affects the tiling itself, but also transforms the set of perfect matchings on subgraphs of such tilings.  In particular, any perfect matching using the two horizontal (or the two vertical) edges of a face labeled $1$ in the Model $1$ tiling $\mathcal{T}_1$ corresponds to a perfect matching using the four new diagonal edges around that specific quadrilateral $1$ in the tiling of Figure \ref{fig:Model1to2}.  Algebraically, this replaces a contribution of $\left(\frac{1}{x_1^2x_4x_6} + \frac{1}{x_1^2x_3x_5}\right)$ to the weighted partition function with a contribution of $\frac{1}{x_3^2x_4^2x_5^2x_6^2}$.  See Figure \ref{fig:urban-matching} (a).

As a second case, any perfect matching that involves exactly one of the four edges of a quadrilateral labeled $1$ in $\mathcal{T}_1$ gets transformed into a perfect matching which uses the two incident diagonal edges and antipodal edge on that quadrilateral in the transformed tiling.  For example, if the original perfect matching uses the top edge of face $1$, i.e. the edge bordering the faces $1$ and $6$, then the contribution to the weighted partition function is transformed from $\frac{1}{x_1x_6}$ to $\frac{1}{Y_1x_3x_4x_5x_6^2}$.  See Figure \ref{fig:urban-matching} (b).

Thirdly, if a perfect matching avoids all four edges of a specific quadrilateral labeled $1$ in $\mathcal{T}$, then this perfect matching corresponds to two perfect matchings after urban renewal.  Namely, the corresponding perfect matchings are the ones that avoid using the four new diagonal edges but either use the two horizontal edges (or the two vertical edges) of that quadrilateral in the tiling of Figure \ref{fig:Model1to2}.  The contribution $\left(\frac{1}{Y_1^2 x_3x_5} + \frac{1}{Y_1^2 x_4x_6}\right)$ is inserted into the relevant terms of the weighted partition function.  See Figure \ref{fig:urban-matching} (c).

\begin{figure}\centering
\includegraphics[width=4in]{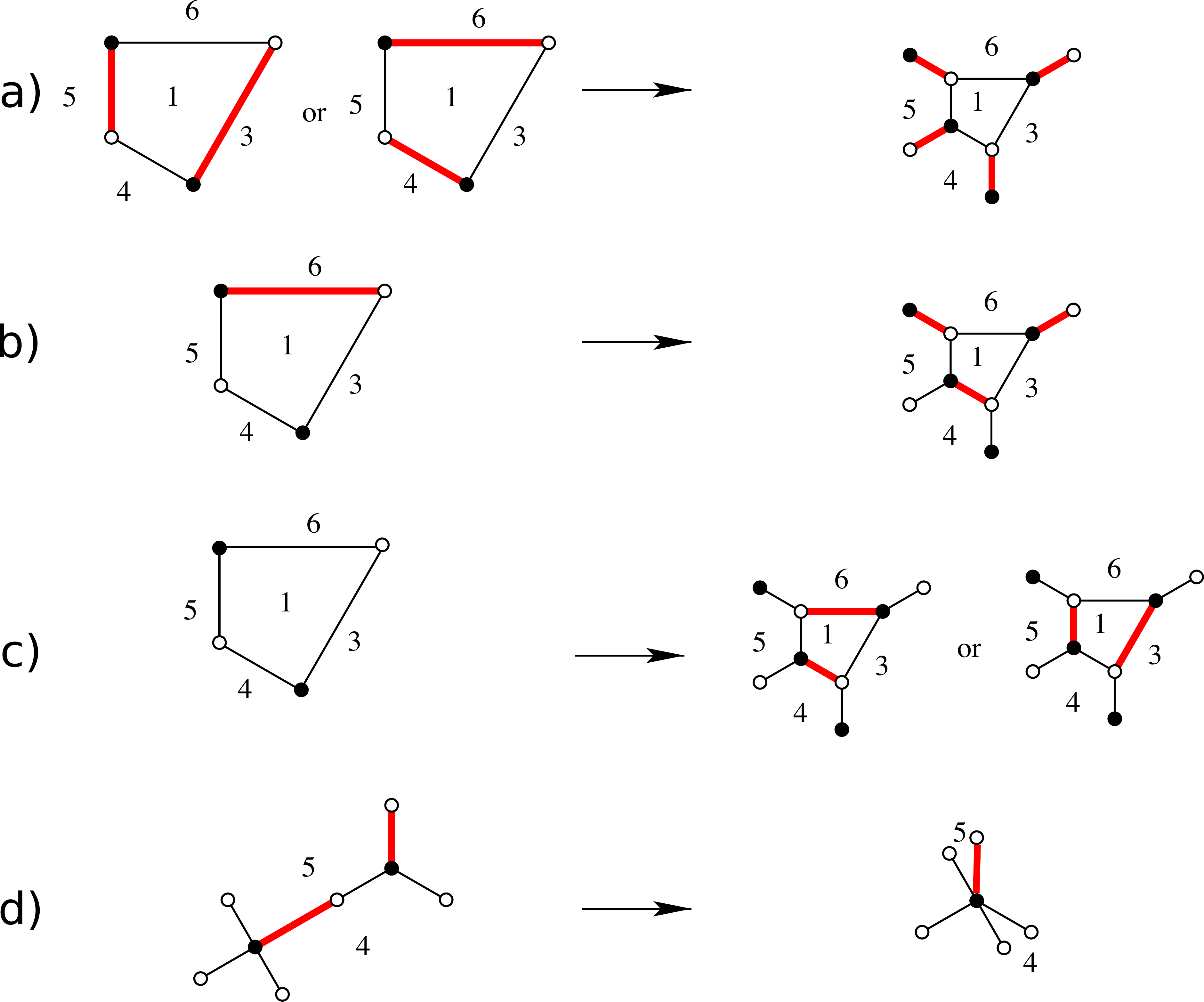}
\caption{Local effects of urban renewal on perfect matchings.  Maps are either $2$-to-$1$, $1$-to-$1$, or $1$-to-$2$ depending on the number of edges in the perfect matching bordering the quadrilateral face $1$.}
\label{fig:urban-matching}
\end{figure}

In all three of these cases, we also must alter the covering monomial associated to subgraphs cut out by such contours.  Firstly, all instances of $x_1$ in the covering monomial are replaced with $Y_1$.  Secondly, since the neighboring faces, those labeled $3$, $4$, $5$ or $6$, are now two sides longer than before (i.e. hexagonal rather than quadrilateral), we must multiply through by $x_3x_4x_5x_6$.  We have the following three identities (after applying $Y_1 = \frac{x_4x_6+x_3x_5}{x_1}$)
$$\left(\frac{1}{x_1^2x_4x_6} + \frac{1}{x_1^2x_3x_5}\right) = \frac{1}{x_3^2x_4^2x_5^2x_6^2}\left(\frac{Y_1x_3x_4x_5x_6}{x_1}\right),$$
$$\frac{1}{x_1x_6} = \frac{1}{Y_1x_3x_4x_5x_6^2}\left(\frac{Y_1x_3x_4x_5x_6}{x_1}\right),$$
$$1 = \left(\frac{1}{Y_1^2 x_3x_5} + \frac{1}{Y_1^2 x_4x_6}\right)\left(\frac{Y_1x_3x_4x_5x_6}{x_1}\right),$$
hence it follows that combining together the above transformations to the weighted partition function of perfect matchings and to the covering monomial correctly yields the combinatorial formula for cluster variables in the Model 2 case.

Lastly, there is a small difference between the tiling of Figure \ref{fig:Model1to2} and the Model 2 tiling $\mathcal{T}_2$ of Figure \ref{fig:Model2-contour}.  Namely, we must delete the $2$-valent white vertices bordering the faces labeled $4$ and $5$, thus collapsing the two adjacent black vertices into one.  The effect is a $1$-to-$1$ map on perfect matchings that deletes an edge incident to this white vertex (which means multiplying by $x_4x_5$) while simultaneously decreasing the number of sides on faces labeled $4$ and $5$ by two (dividing the covering monomial by $x_4x_5$).  See Figure \ref{fig:urban-matching} (d). The net effect leaves the cluster variable formulas unaffected and since such $2$-valent white vertices always lie in the interior of such a contour (as opposed to along a contour), this completes the proof of Theorem \ref{thm:formula2}, using the Model 2 brane tiling $\mathcal{T}_2$ of Figure \ref{fig:Model2-contour} (in place of that of the hybrid tiling of Figure \ref{fig:Model1to2}).

\vspace{1em}

For the case of combinatorial formulas for cluster variables starting from an initial quiver of Model $3$, we again use the method of urban renewal.  We start with the contour drawn on the brane tiling $\mathcal{T}_2$ of Model 2, Figure \ref{fig:Model2-contour}, and apply urban renewal to all of the faces labeled $4$ (corresponding to mutating the cluster variable $x_4$) to get between these two models.  This results in the hybrid tiling of Figure \ref{fig:Model2to3}.

\begin{figure}\centering
\includegraphics[width=4.5in]{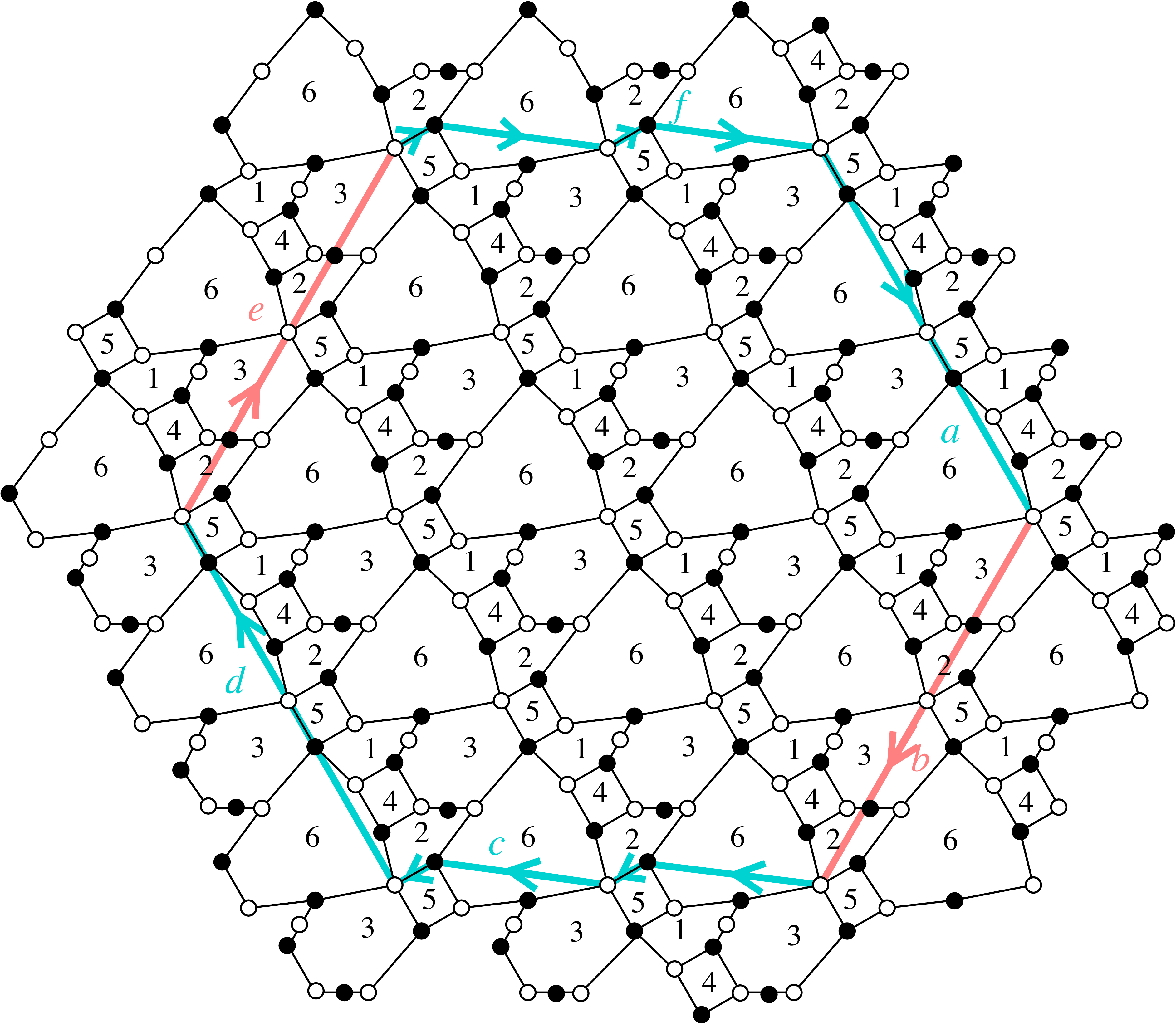}
\caption{Urban Renewal applied to the Model 2 brane tiling $\mathcal{T}_2$ at faces labeled $4$.}
\label{fig:Model2to3}
\end{figure}

Like the above case, the resulting tiling has perfect matchings transformed according to urban renewal and the resulting subgraphs contain $2$-valent vertices (white vertices bordering the faces $1$ and $3$ and black vertices bordering the faces $2$ and $3$) afterwards.  However, unlike the above, some of these black $2$-valent vertices lie directly on the contours cut out by the hexagon of sides $a,b,c,d,e$ and $f$ (namely on sides $b$ and $e$).  In particular, the weighted partition functions do not consistently transform by the appropriate $1$-to-$1$ map unless we included the edges in our subgraph that hang outside of the contour, namely the edges incident to the black $2$-valent vertices and bordering the faces $2$ and $3$.  Consequently, for the Model 3 case we apply the transformation of Figure \ref{fig:urban-matching} (d) collapsing together the neighbors of the $2$-valent vertices, as in the Model 2 case, in two steps.

First, we collapse together the neighbors of the $2$-valent vertices for all white $2$-valent vertices as well as the black $2$-valent vertices that do not lie on the contour.  Secondly, since the remaining black $2$-valent vertices lie on the contour, they are either deleted by the rule of Definition \ref{def:Model3subgraph} (when the contour $b$ or $e$ is positive) or left as $1$-valent vertices (when the contour $b$ or $e$ is negative) once vertices outside the contour are deleted.
In the case that the contour $b$ or $e$ is positive, the resulting boundary of the subgraph matches up with that of Figure \ref{fig:Model3-contour}.  See Figure \ref{fig:Mod2to3-boundaries} (a) and (b).  Similarly, in the case that the contour $b$ or $e$ is negative, then matching up these $1$-valent vertices with their unqiue neighbors leads to a cascade of forced matched edges and a boundary as in Figure \ref{fig:Mod2to3-boundaries} (c).  Comparing the result with the boundary starting instead with the contours in Figure \ref{fig:Model3-contour}, see Figure \ref{fig:Mod2to3-boundaries} (d), we see that these two rules match up.  This completes the proof of Theorem \ref{thm:formula3}. 

\begin{figure}\centering
\includegraphics[width=4.5in]{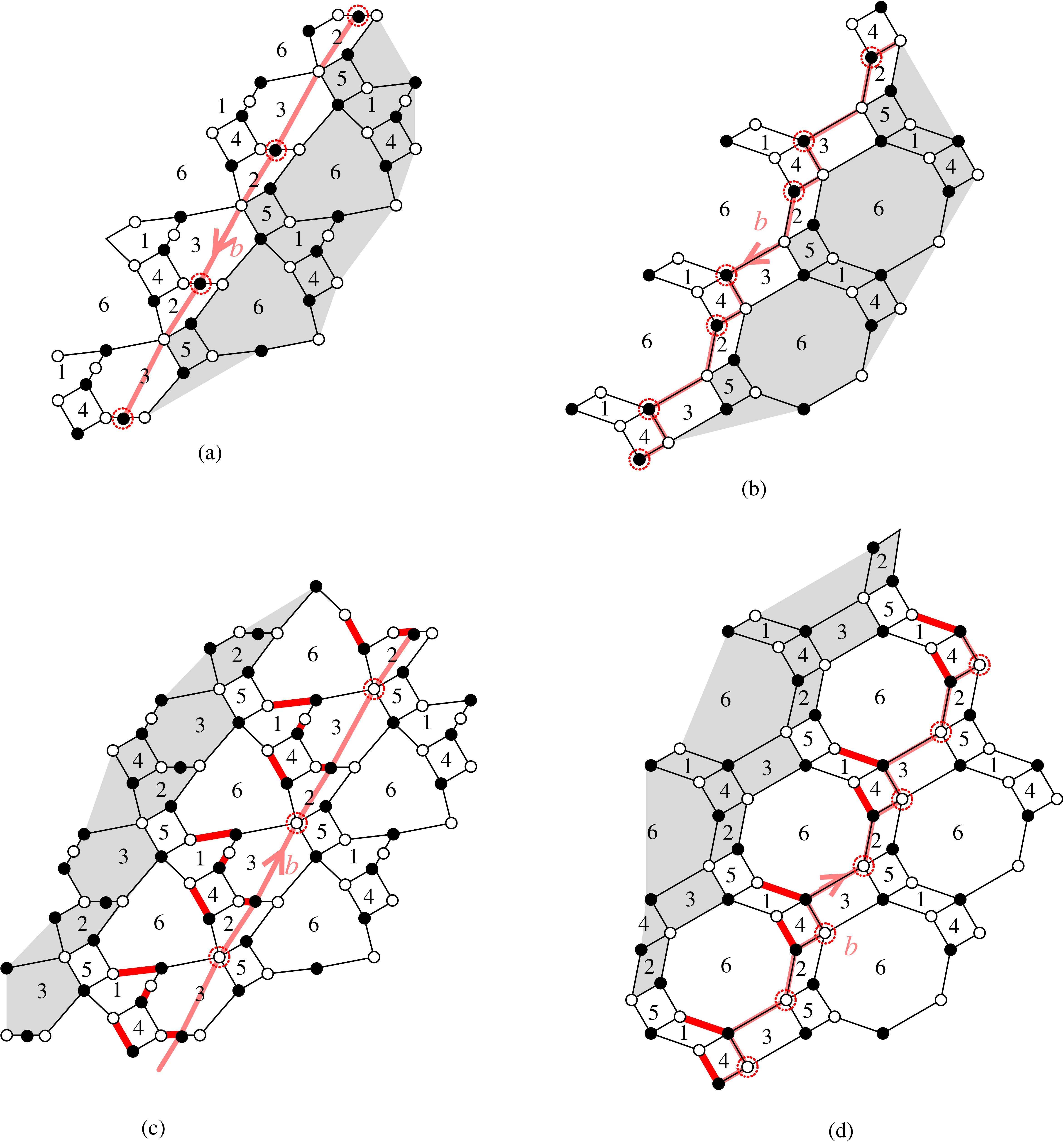}
\caption{Close-ups of the boundaries along side $b$ of the contour.  A partial application of Definition \ref{def:Model3subgraph} to the tiling of Figure \ref{fig:Model2to3} is shown on the left while the full application of Definition \ref{def:Model3subgraph} to the brane tiling $\mathcal{T}_3$ of Figure \ref{fig:Model3-contour} is shown on the right.  The case of positive (resp. negative) $b$ is shown on the top (resp. bottom).  Circled vertices are deleted according to Definition \ref{def:Model3subgraph}.}
\label{fig:Mod2to3-boundaries}
\end{figure}

Finally, the proof of Theorem \ref{thm:formula4} in the Model 4 case uses the same logic as above with the advantage that no $2$-valent vertices appear after urban renewal.  Consequently, we apply urban renewal to all quadrilaterals labeled $3$, which directly results in the subgraphs of $\mathcal{T}_4$ cut out by the contours as in Figure \ref{fig:Model4-contour}.  In particular, the broken symmetry of the contours (i.e. zig-zag sides $b$ and $e$ as opposed to straight sides) in the Model 3 case is therefore inherited by the contours in the Model 4 case.

\section{Model 3 and Trimmed Aztec Rectangles}

\label{sec:Mod3}

In this subsection, we discuss how one can recover the number of perfect matchings of the six families of graphs discussed  in \cite{trim} from our result for Model 3. In order to prove Ciucu's conjecture \cite{Ciucutrim} about the number of perfect matchings of the trimmed Aztec rectangle (see Figure \ref{fig:TrimmedAztecRectangle}), the first author gave explicit formulas for the number of matchings of the six families of graphs, named $A^{(i)}_{a,b,c}$ and $F^{(i)}_{a,b,c}$, for $i=1,2,3$. These formulas are all certain products of perfect powers of $2,5,11$ (see Theorem 2.1 in \cite{trim}). They can be obtained by letting all of the $x_i$'s equal $1$ in our formula for Model 3, i.e. $z_{i,j,k}^{(3)}$, in Theorems \ref{thm:z3} and \ref{thm:formula3}.

In \cite{trim}, the graphs $A^{(i)}_{a,b,c}$ and $F^{(i)}_{a,b,c}$ are defined similarly to our graphs, i.e. based on a six-sided contour, but are parameterized slightly different from that of our subgraphs in Model 3. In particular, the side lengths of the contour of the $A^{(i)}$- and the $F^{(i)}$-graphs are all non-negative and we did not consider the use of signed side-lengths in \cite{trim}. Moreover, if we assign signs to side lengths of the contour in the definition of those graphs in the same ways as our contours in Model 3, then the sum of all six sides of the contour is $0$ (as opposed to being $1$ as it is in our subgraphs of Model 3).

As in \cite{trim}, contours are parameterized by three sides, $a$, $b$, and $c$, which then determine the other three by $d = 2b - a - 2c$, $e = 3b-2a-2c$, $f = |2a-2b+c|$.  However, despite the similar lettering, the $6$-tuples $(a,b,c,d,e,f)$ do not agree with the contours $\mathcal{C}_3(a,b,c,d,e,f)$ of our current work unless we apply certain affine transformations first.  
To convert between our two coordinate systems, in addition to applying a rotation and negation of coordinates, it is necessary to decrease the values of $d$ and $f$ (resp. increase the value of $e$) by one\footnote{This description holds for five out of the six graph families, except for $F^{(3)}$ (as we detail below).  This exception is inherited from the description from \cite{trim}.}.

\begin{figure}
\includegraphics[width=5.8in]{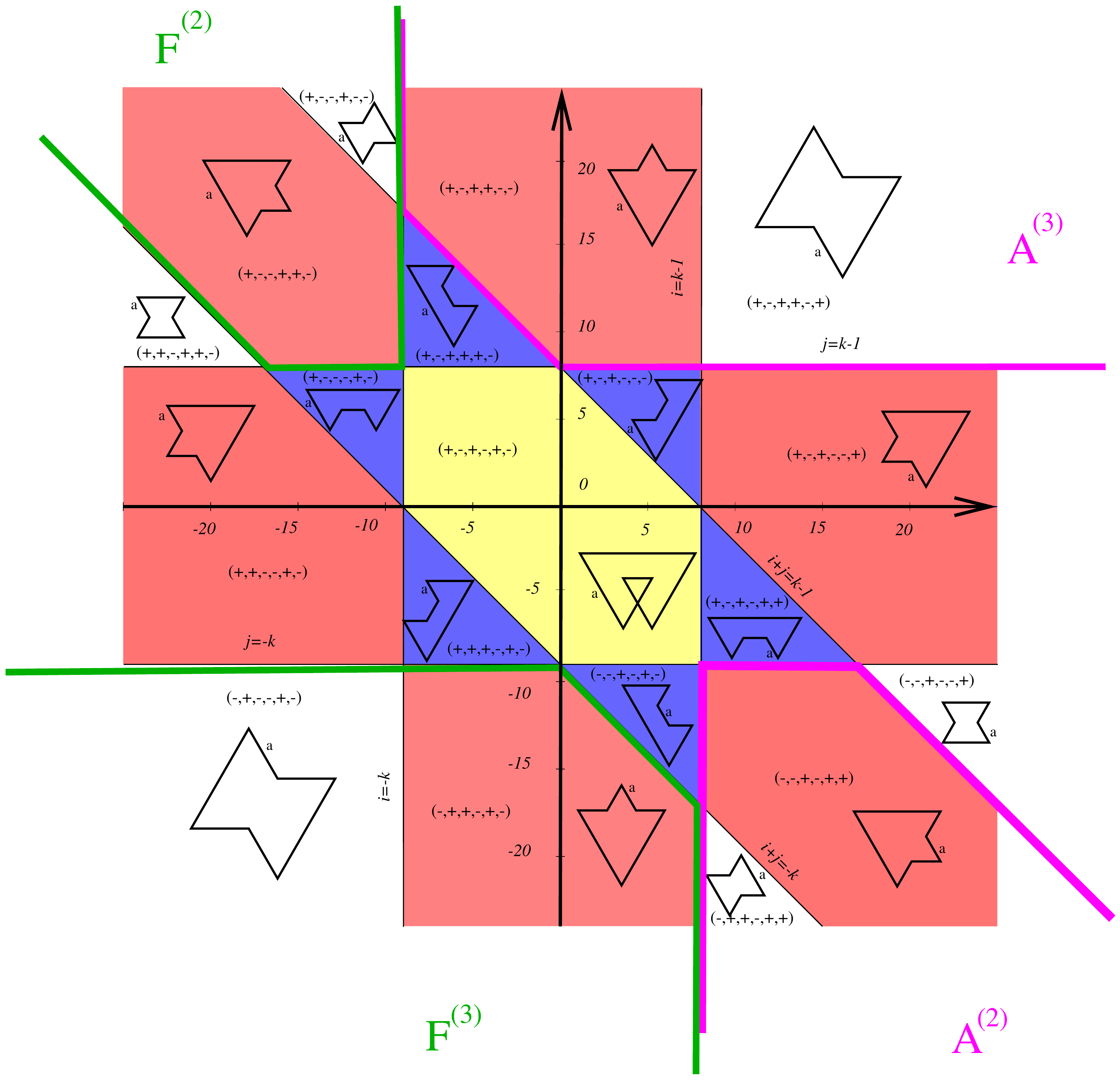}
\caption{Possible sign-patterns for a fixed $k \geq 1$, where the six lines illustrate the $(i,j)$-coordinates so that one of the elements of the $6$-tuple equals zero.  Four of the six families of graphs from \cite{trim} match up with the unbounded regions as illustrated. 
}
\label{fig:decomposition}
\end{figure}

\begin{figure}
\includegraphics[width=5.8in]{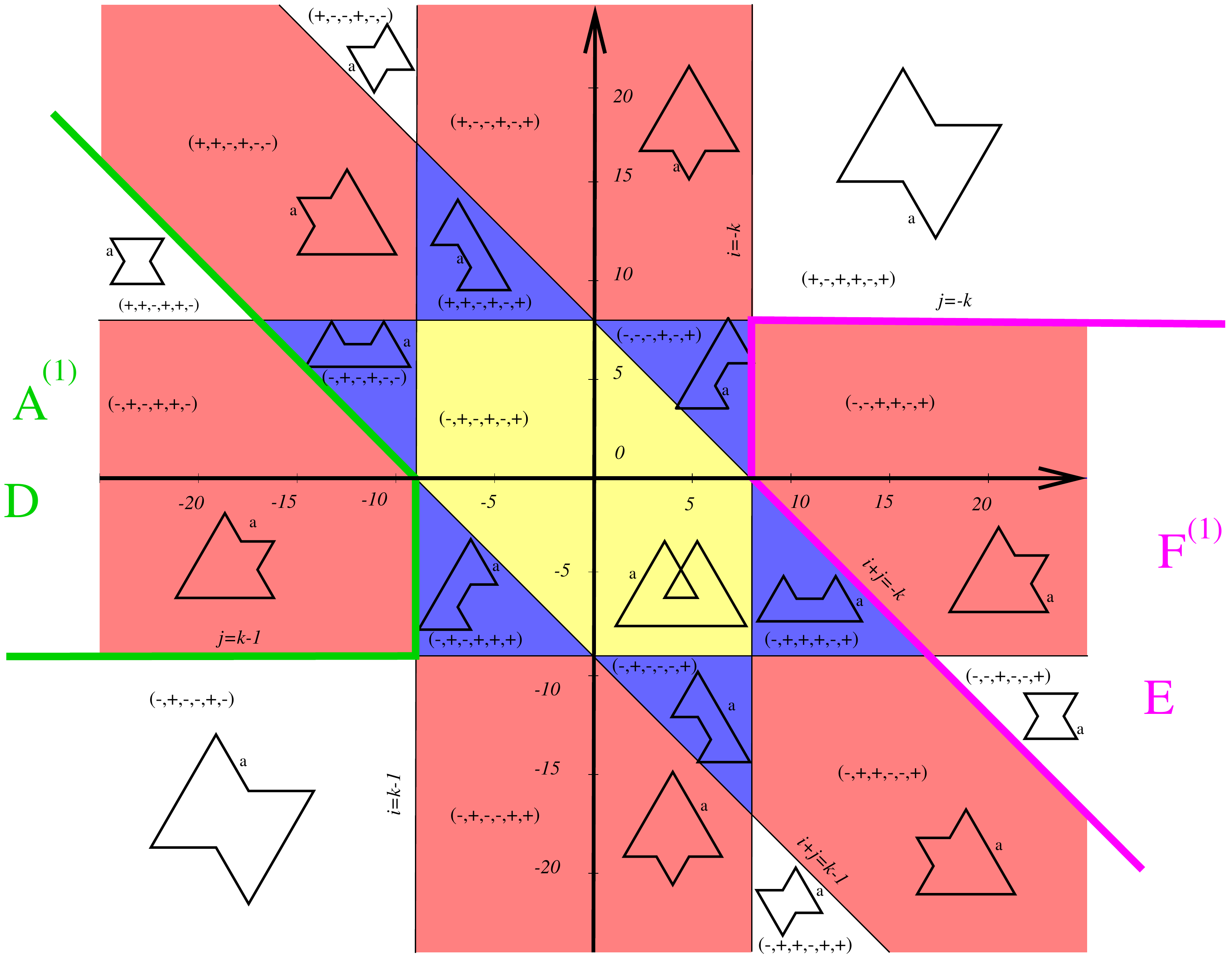}
\caption{Possible sign-patterns for a fixed $k \leq 0$.  Observe that the uncolored unbounded regions correspond to the same sign patterns as they did in the $k \geq 1$ case. The $A^{(1)}$ and $F^{(1)}$ families from \cite{trim} ($D$- and $E$- families from \cite{lai', lai}) match up with the Model 3 (resp. Model 4) contours as illustrated. 
}
\label{fig:decomposition2}
\end{figure}

Nonetheless, as shown in the examples below, the subgraphs of Model 3 coincide with the six families graphs in \cite{trim}, up to a simple deformation.  We illustrate the translation between our two coordinate systems in Figures \ref{fig:decomposition} and \ref{fig:decomposition2} (compare with Figures 20 and 21 of \cite{LaiMus}).  It is worth noting that in \cite{trim, lai', lai}, the three families of $A^{(i)}_{a,b,c}$ graphs are $60$ degree rotations of each other, but in our current work, we exhibit families that are $120$ degree rotations of each other instead.  Additionally, the contour shapes in the purple (triangular bounded) areas did not appear in the first author's previous work \cite{trim, lai', lai}, nor did the self-intersecting contour shapes of the yellow (hexagonal central) areas.

Consider the contour $ \mathcal{C}_3(a,b,c,d,e,f)$ with the sign pattern $(+,-,+,+,-,+)$. We rotate the corresponding core graph $\mathcal{G}_3(a,b,c,d,e,f)$, for example see the shaded region in Figure \ref{fig:Model3egs} (Left), $60^{\circ}$ counter-clockwise and `straighten out' all the  hexagonal faces to get a subgraph of the square grid. The hexagonal faces become $1\times 3$ rectangles, and all of the other faces become $1\times 1$ squares. This way, the core graph becomes the graph $A^{(3)}_{a,-b,c}$ (in the case $a\leq c+d$ in \cite{trim}). More precisely, the six-sided contour of the $A^{(3)}$-graph is  $(a,-b,c,d-1,-e-1,f-1)$.  For example, see Figure \ref{fig:Model3toTrim1} (Left). Similarly, the core graph corresponding to the sign pattern $(+,-,+,+,-,-)$ is isomorphic to the graph $A^{(3)}_{a,-b,c}$
 in the case $a>c+d$ in \cite{trim}.

By  the same arguments, the core graph $\mathcal{G}_3(a,b,c,d,e,f)$  with the sign pattern $(+,+,-,+,+,-)$ gives the graph $ A^{(1)}_{b,-c,d-1}$ in the case $a\leq c+d$ in \cite{trim}.  See Figure \ref{fig:Model3egs} (Middle) and Figure \ref{fig:Model3toTrim1} (Right). Similarly, the pattern $(-,+,-,+,+,-) $ yields the the $A^{(1)}$-type graph in case $a>c+d$.   See Figure \ref{fig:Model3egs} (Middle).

\begin{figure}
\includegraphics[width=2in]{Sample1Model3.pdf} \includegraphics[width=2in]{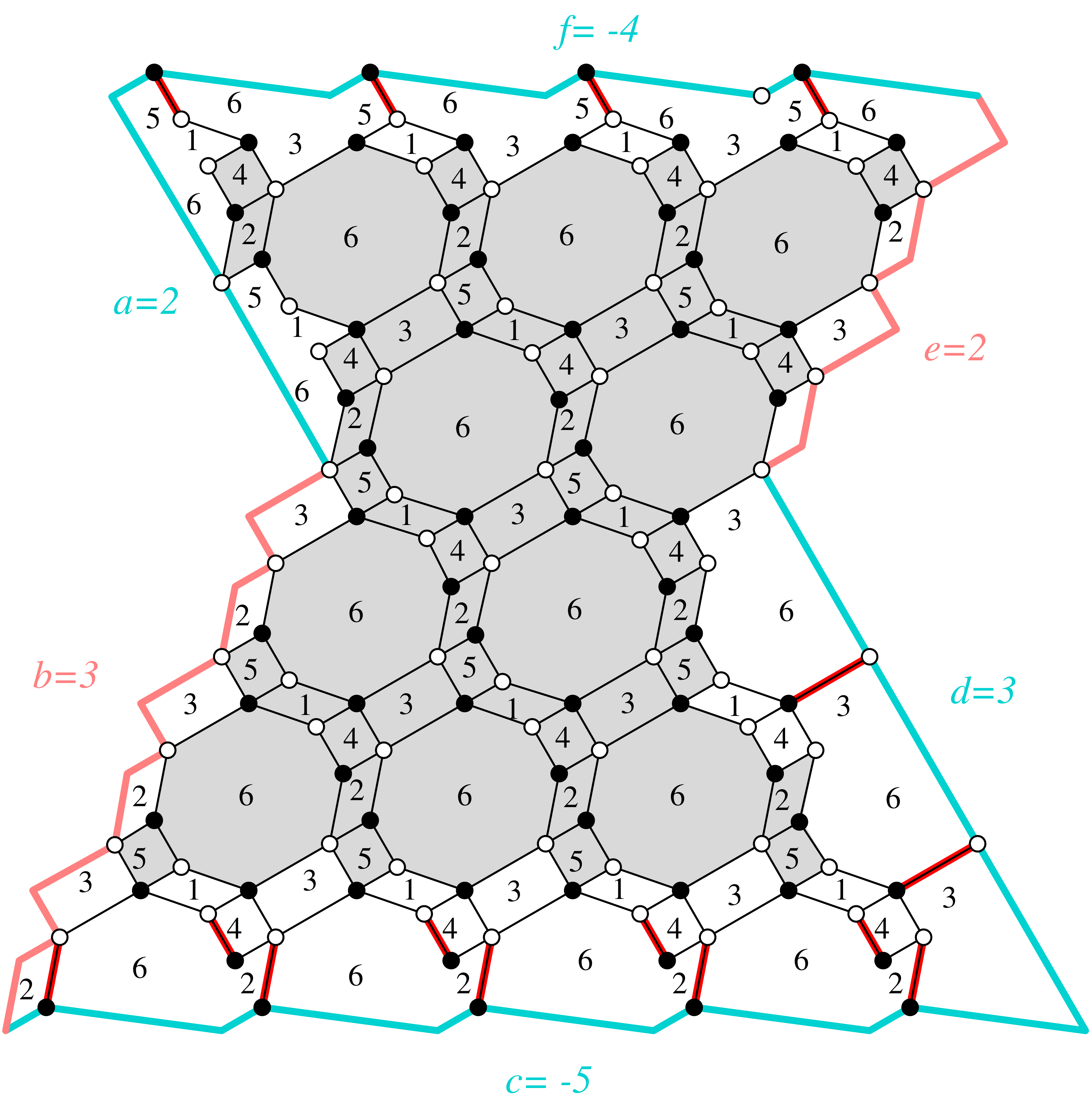}
\includegraphics[width=5in]{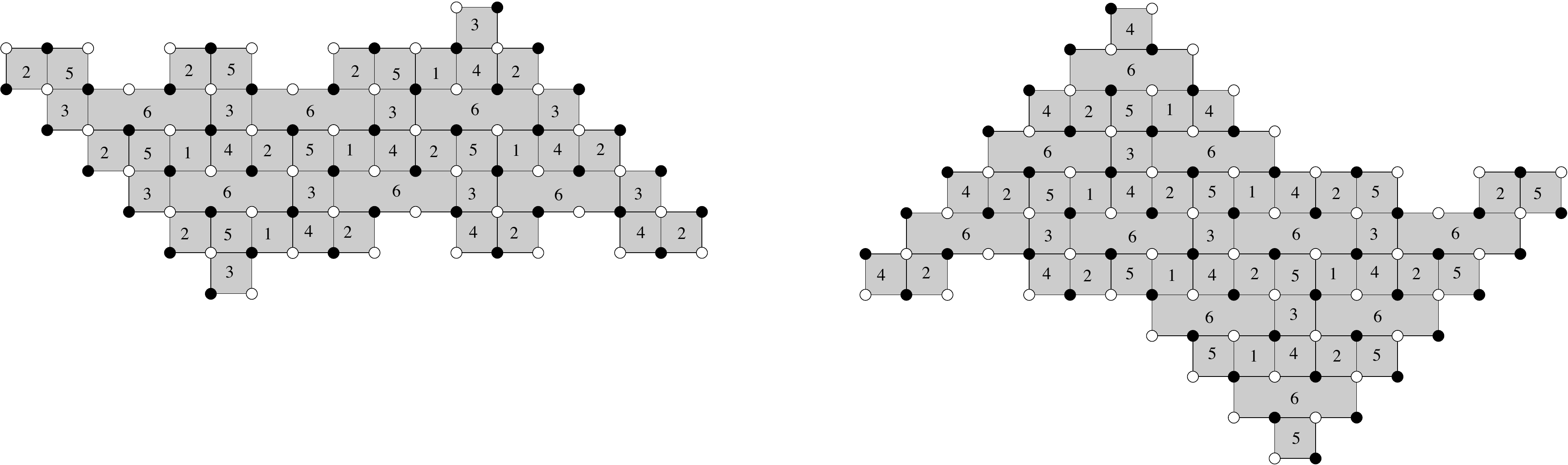}
\caption{Graphs $A^{(3)}_{3,4,1}$ (Left) and $A^{(1)}_{3,5,2}$ (Right) in \cite{trim} correspond to the shaded core graphs for Model 3 with the contours $(3,-4,1,4,-5,2)$ and $(2,3,-5,3,2,-4)$, respectively.}
\label{fig:Model3toTrim1}
\end{figure}

\begin{figure}
\includegraphics[width=5in]{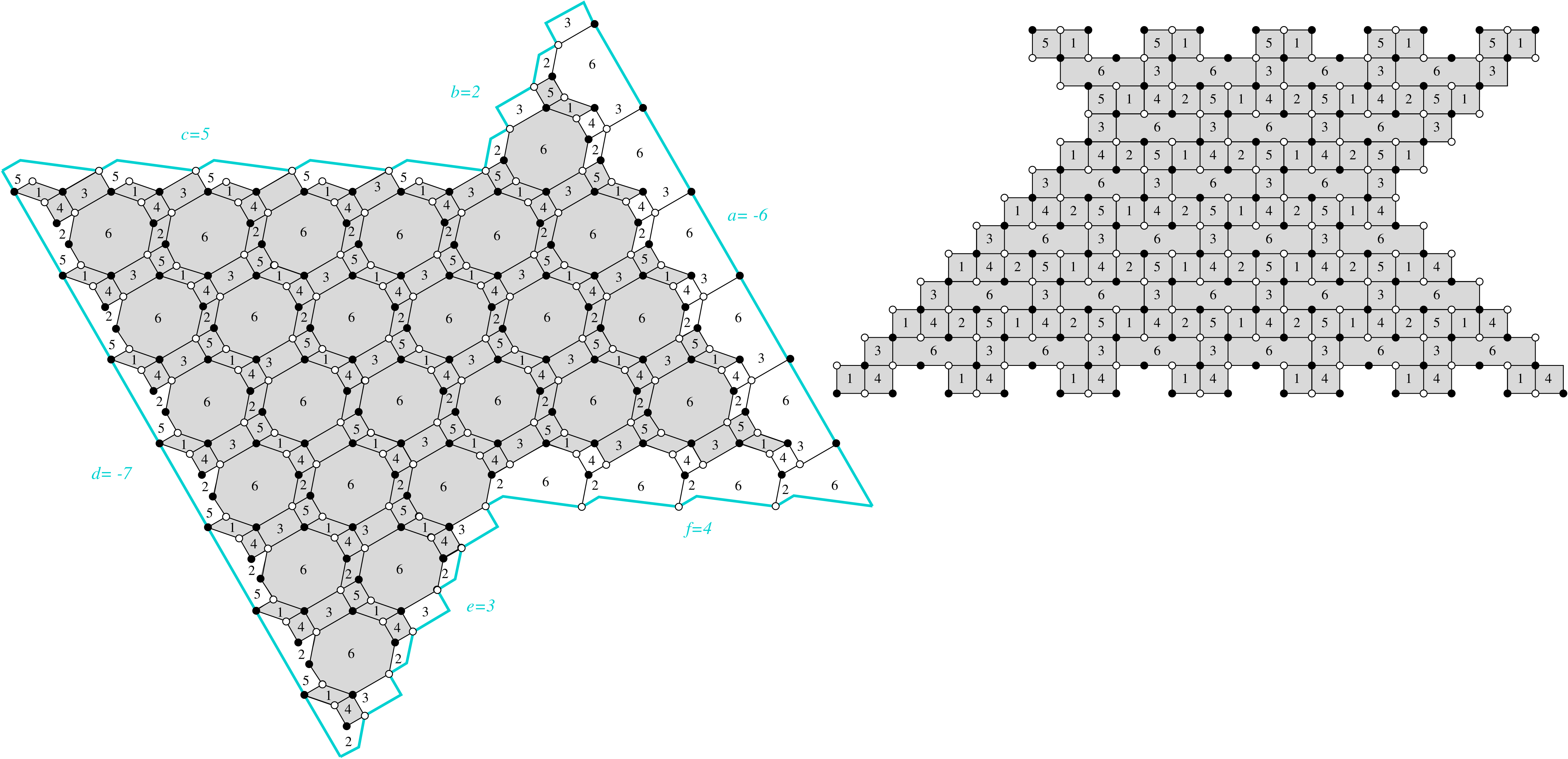}
\caption{Graphs $A^{(2)}_{5,8,4}$ (Right) in \cite{trim} corresponds to the  shaded core graph of the graph for Model 3 with the contour $(-6,2,5,-7,3,4)$ (Left).}
\label{fig:Model3toTrim2}
\end{figure}

Figure \ref{fig:Model3toTrim2} shows that one can get the graph $ A^{(2)}_{c,-d+1,e+1}$ (in the case $a\leq c+d$ in \cite{trim}) from the core subgraph $\mathcal{G}_3(a,b,c,d,e,f)$ corresponding to the contour  $ \mathcal{C}_3(a,b,c,d,e,f)$ with the sign pattern $(-,+,+,-,+,+)$.  The case $a>c+d$ in the $A^{(2)}$-type graph corresponds to the pattern $(-,-,+,-,+,+)$ in our Model 3.

\begin{figure}
\includegraphics[width=5in]{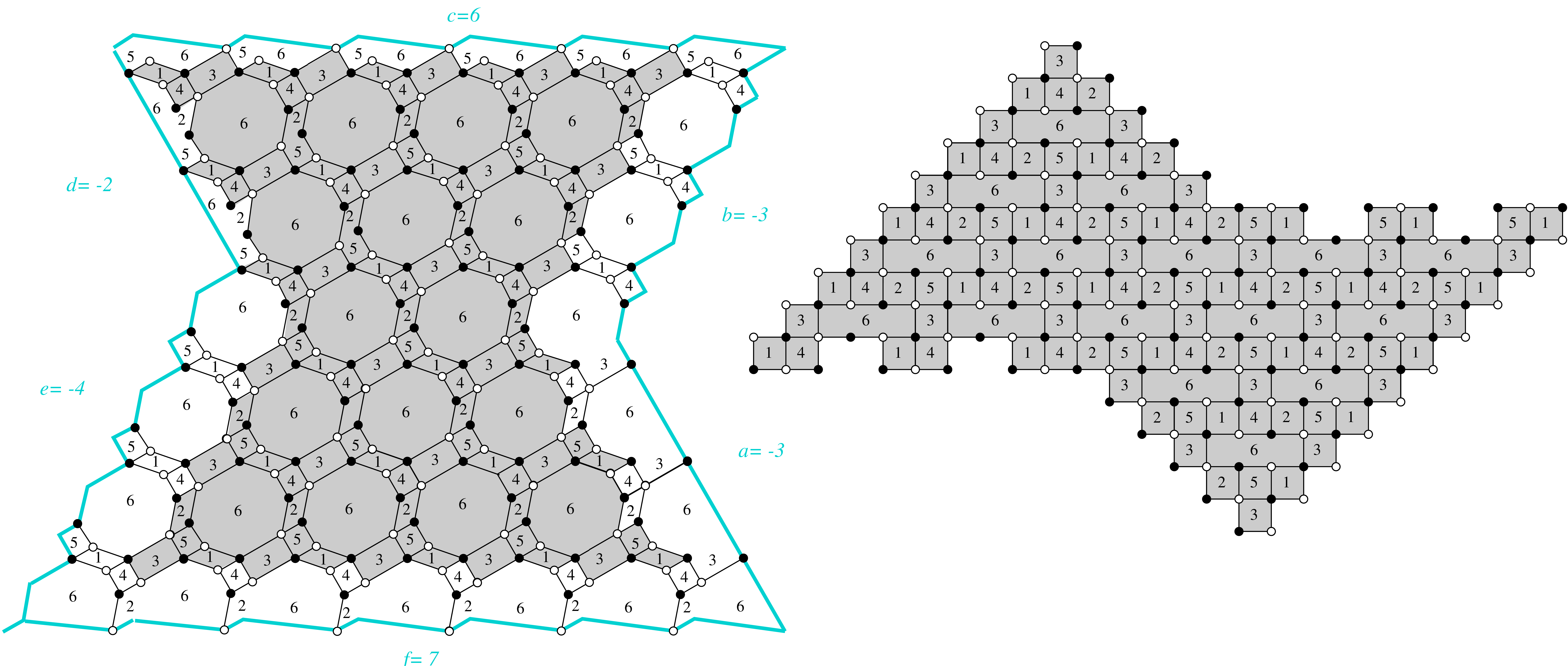}
\caption{Graphs $F^{(1)}_{3,6,3}$ (Right) in \cite{trim} corresponds to the  shaded core graph of the graph for Model 3 with the contour $(-3,-3,6,-2,-4,7)$ (Left).}
\label{fig:Model3toTrim3}
\end{figure}

For $i=1,2,$ or $3$, the graph $F^{(i)}(a,b,c)$ is obtained from $A^{(i)}(a,b,c)$, in our coordinate system, by reversing the signs of each entry and rotating by $180$ degrees.  Figure \ref{fig:Model3toTrim3} illustrates how the graph $F^{(1)}_{-e-1,f-1,-a}$ (in the case $a\leq c+d$ in \cite{trim}) can be obtained from $\mathcal{G}_3(a,b,c,d,e,f)$ with the sign pattern $(-,-,+,-,-,+)$.  The case $a>c+d$ in the $F^{(1)}$-type graph corresponds to the pattern $(-,-,+,+,-,+)$ in our Model 3.

\begin{figure}
\includegraphics[width=5in]{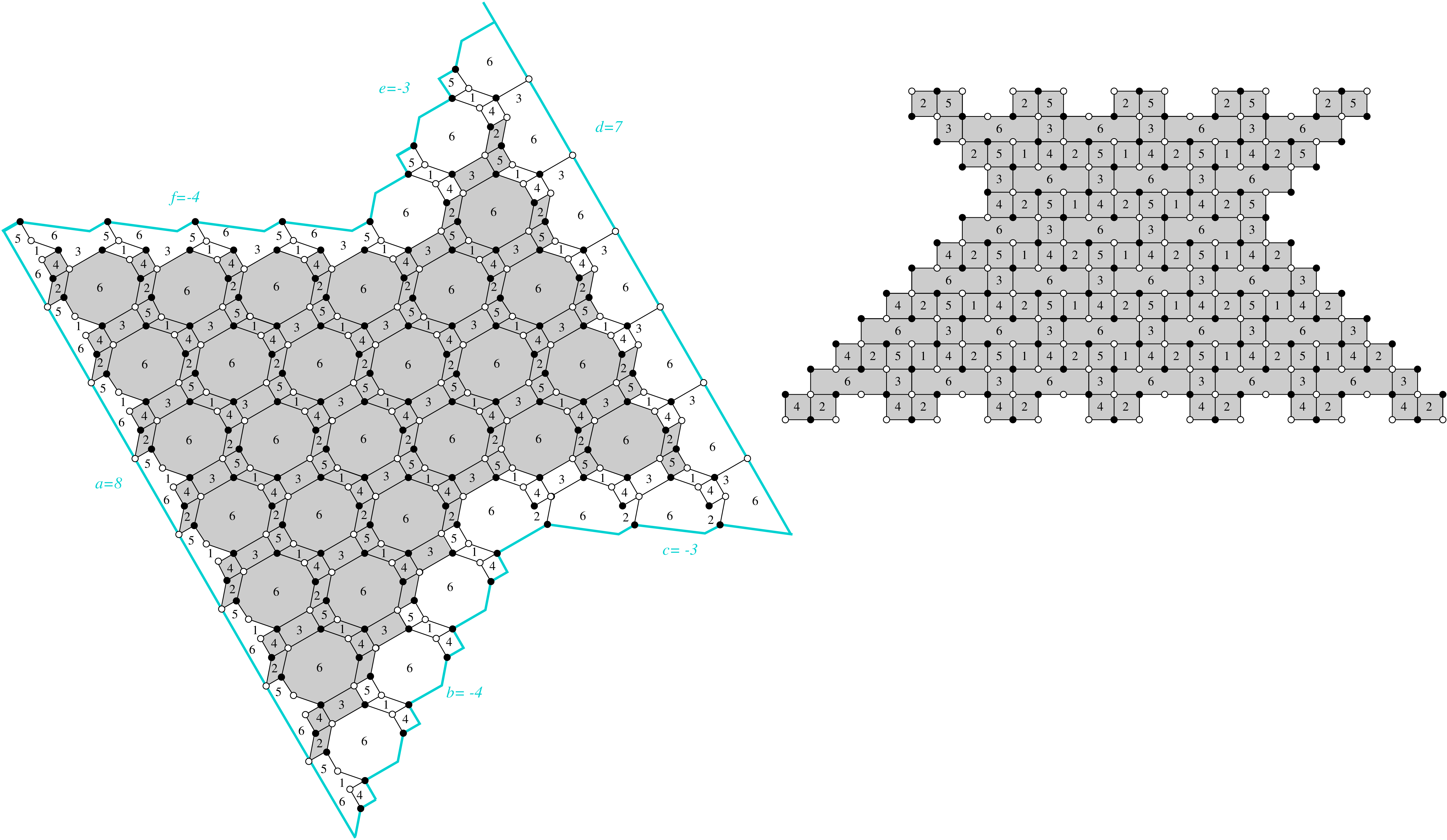}
\caption{Graphs $F^{(2)}_{5,8,4}$ (Right) in \cite{trim} corresponds to the  shaded core graph of the graph for Model 3 with the contour $(8,-4,-3,7,-3,-4)$ (Left).}
\label{fig:Model3toTrim4}
\end{figure}

 By the same arguments, one can get the graph $F^{(2)}_{-f+1,a,-b}$ (the case $a\leq c+d$) from the subgraph with the contour $\mathcal{C}_3(a,b,c,d,e,f)$ in the sign pattern $(+,-,-,+,-,-)$ (see Figure \ref{fig:Model3toTrim4} for an example). In the case $a> c+d$, the graph $F^{(2)}_{-f+1,a,-b}$ is obtained from the subgraph for Model 3 with the sign pattern $(+,-,-,+,+,-)$.

 \begin{figure}
\includegraphics[width=5in]{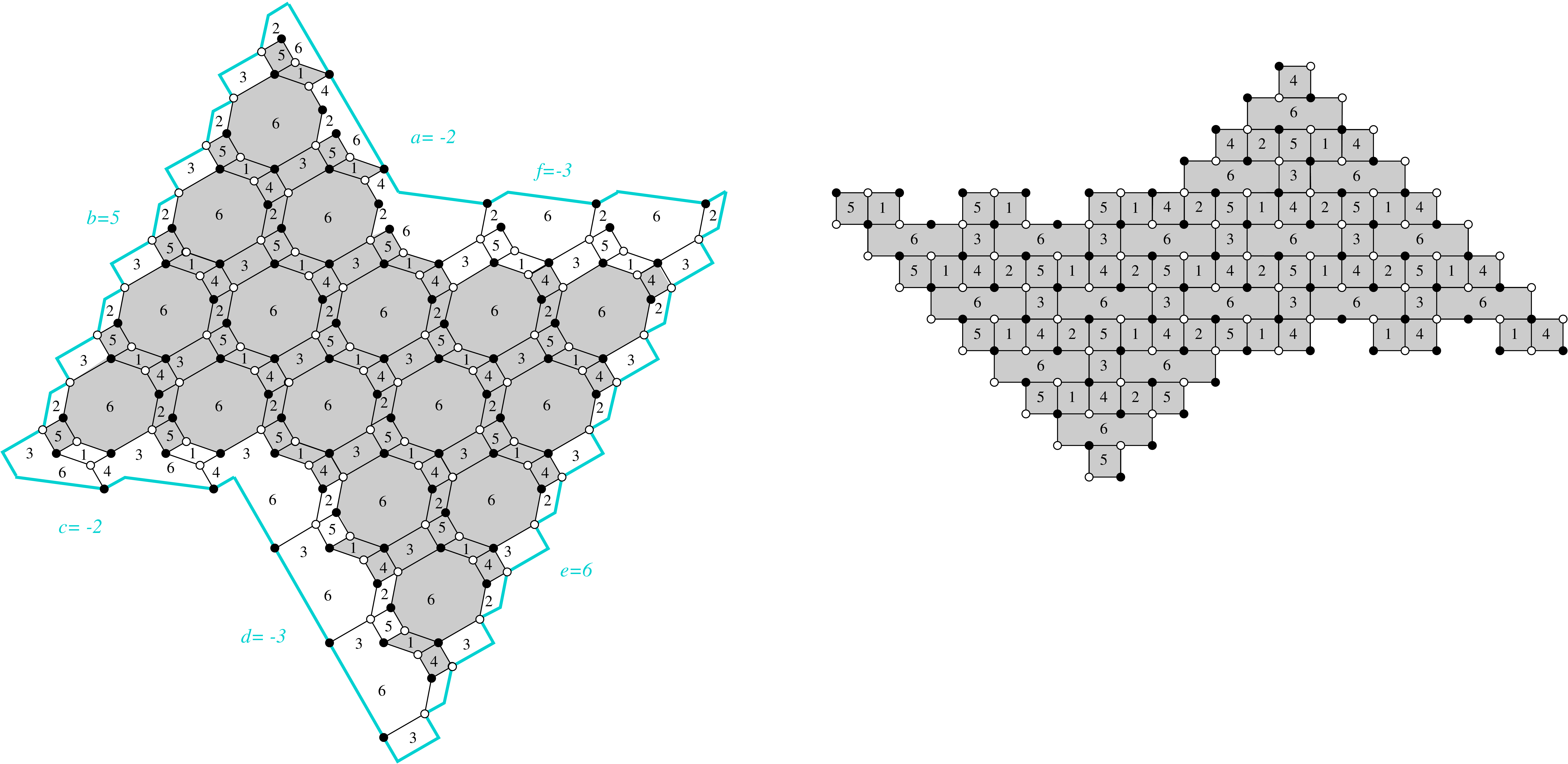}
\caption{Graphs $F^{(3)}_{3,6,3}$ (Right) in \cite{trim} corresponds to the  shaded core graph of the graph for Model 3 with the contour $(-2,5,-2,-3,6,-3)$ (Left).}
\label{fig:Model3toTrim5}
\end{figure}

Finally, the  Model 3 subgraphs $\mathcal{G}_3(a,b,c,d,e,f)$ corresponding to the contours  $\mathcal{C}_3(a,b,c,d,e,f)$ with the sign patterns      $(-,+,-,-,+,-)$ or $(-,+,+,-,+,-)$ are isomorphic to graphs of the form $ F^{(3)}_{-d,e,-f}$.  See Figure \ref{fig:Model3toTrim5} for an example.

\section{Model 4 and Blum's Conjecture}

\label{sec:Mod4}

Similar to the case of Model 3, our result for Model 4 also implies several known results for the enumeration of tilings. In particular, we show below that the subgraphs corresponding to certain shapes resulting from the Model 4 contours are isomorphic to the dual graphs of the $D$- and $E$-regions appearing in Ciucu and the first author's work on Blum's Conjecture \cite{lai', lai}. Thus, our formula $z_{i,j,k}^{(4)}$ in Theorems \ref{thm:z4} and \ref{thm:formula4} implies the tiling formulas for these regions (see Theorem 3.1 in \cite{lai'}) by simply letting all of the $x_i$'s equal $1$.  To obtain the corresponding weighted tiling formulas for these regions (see Theorem 2.1 in \cite{lai}), we use the evaluations $x_1=x_2=x_3=\sqrt{\frac{z}{xy}}$, $x_4=x_5=\sqrt{\frac{y}{zx}}$, and $x_6=\sqrt{\frac{x}{yz}}$.

Figure \ref{fig:decomposition2} illustrates the translation between the contour coordinate system of our current work and that of \cite{lai',lai}. (Compare with Figure 21 of \cite{LaiMus}.) For more details on how our Model 4 subgraphs $\mathcal{G}_4(a,b,c,d,ef)$ compare to the $D$- and $E$- regions of \cite{lai', lai}, we break our set of subgraphs into different cases.  First, we consider the subgraphs $\mathcal{G}_4(a,b,c,d,e,f)$ associated to the contours  $\mathcal{C}_4(a,b,c,d,e,f)$ with the sign pattern $(-,+,-,+,+,-)$.  See Figure \ref{fig:Model4egs} (Middle). The core graph $\mathcal{G}_4(a,b,c,d,ef)$ in this case, indicated by the shaded graph, is exactly the dual graph of the region $D_{b,-c,d-1}$ in \cite{lai',lai} (in the case $a>c+d$).  The six-sided contour of the $D$-region in this case is $(b,-c,d-1,e+1,-f+1,a)$.   Similarly, the dual graph of the $D$-region in the case $a\leq c+d$ corresponds to the sign pattern $(+,+,-,+,+,-)$ of the contour $\mathcal{C}_4(a,b,c,d,e,f)$ in Model 4.

\begin{figure}
\includegraphics[width=3in]{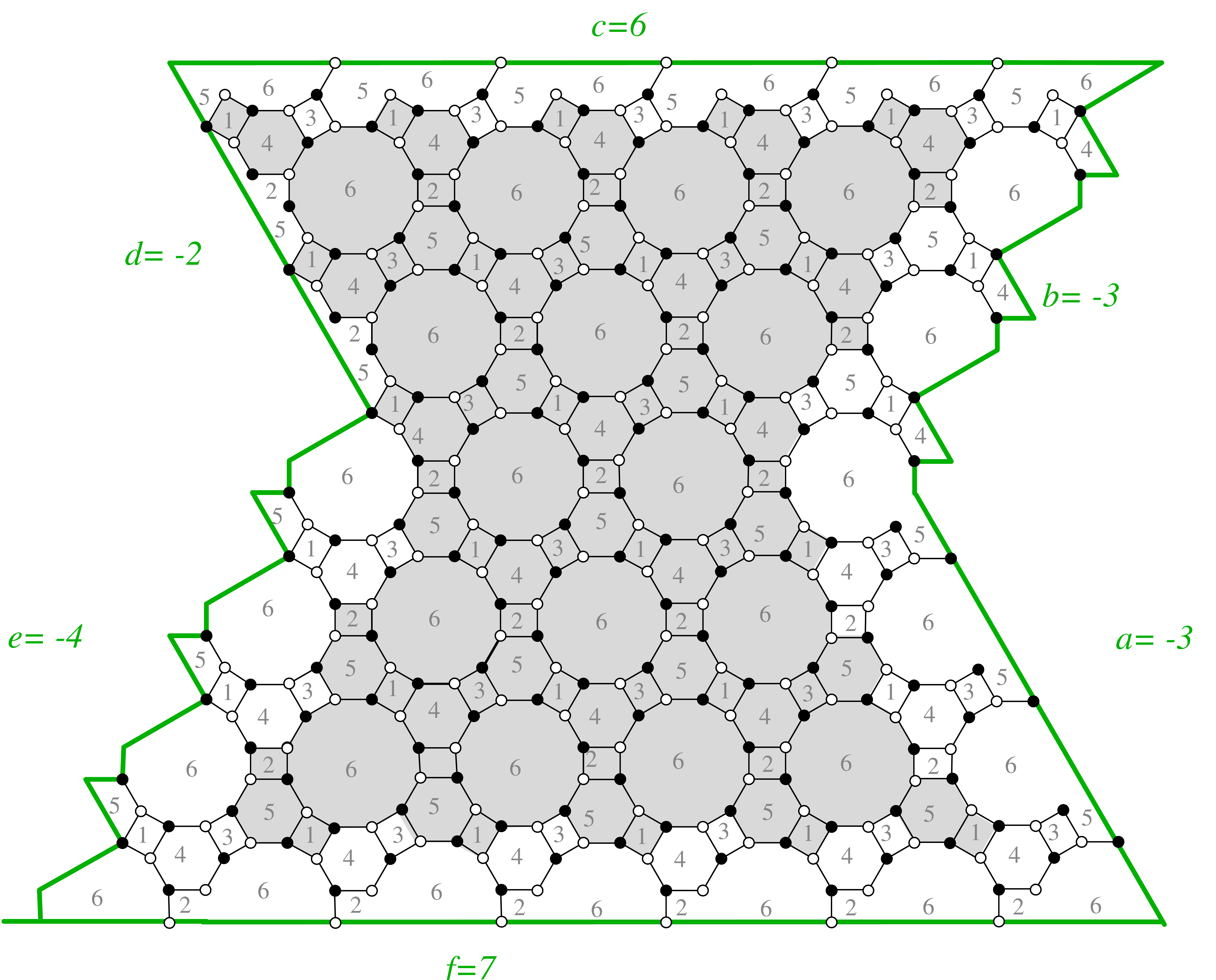}
\caption{The dual graph of the region $E_{5,8,3}$ in \cite{lai', lai} corresponds to the  shaded core graph of the graph for Model 4 with the contour $(-3,-3,5,-2,-4,6)$.}
\label{fig:Model4toDungeon}
\end{figure}

Next, Figure \ref{fig:Model4toDungeon} shows that the subgraphs $\mathcal{G}_4(a,b,c,d,e,f)$ associated to the contour $\mathcal{C}_4(a,b,c,d,e,f)$ with the sign pattern
 $(-,-,+,-,-,+)$ is exactly the dual graph of the region $ E_{-e-1,f-1,-a}$  in \cite{lai',lai} (for the case $a\leq c+d$).   Finally, when $a>c+d$, the dual graph of the $E$-region corresponds to the pattern $(-,-,+,+,-,+)$.

\section{Model 4 and the Hexahedron Recurrence}

\label{sec:super}

Our algebraic formula for Model $4$, i.e. Theorem \ref{thm:z4}, also sheds further light on the Laurent polynomials $A(a,b,c)$ appearing in Rick Kenyon and Robin Pemantle's work \cite{KP} on the hexahedron recurrence.  For each $(a,b,c) \in (\mathbb{Z}/2)^3$ that has an integral sum, the coordinate $A(a,b,c)$ represents a solution to the hexahedron recurrence, and can be expressed as a Laurent polynomial\footnote{This is because of its cluster algebraic interpretation or by applying the Laurent Phenomenon \cite{laurent}.} in the infinite set of initial variables 
$$\mathcal{I}_2 = \{A(i,j,k) : 0 \leq i + j + k \leq 2\} ~~\cup~~ 
\{A(i+\frac{1}{2},j+\frac{1}{2}, k): i+j+k=0\}$$ $$~~\cup~~  \{A(i+\frac{1}{2},j, k+\frac{1}{2}): i+j+k=0\} ~~\cup~~  \{A(i,j+\frac{1}{2}, k+\frac{1}{2}): i+j+k=0\}.$$
The set $\mathcal{I}_2$ corresponds to the faces of $4-6-12$ graph $\Gamma(Z_2)$, as in \cite[Sec. 3]{KP}, as illustrated in Figure \ref{fig:KP4612}.

\begin{figure}
\includegraphics[width=5.5in]{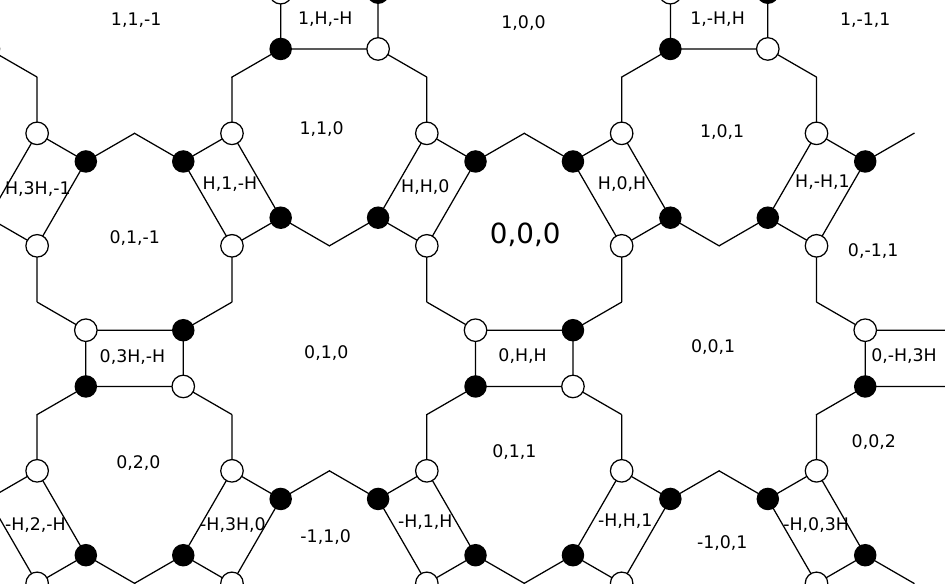}
\caption{The $4-6-12$ graph of \cite{KP} with face labels.  We use $H$ instead of $\frac{1}{2}$ for a better visualization. (Based on \cite[Fig. 7]{KP}.)}
\label{fig:KP4612}
\end{figure}

Up to a relabeling of the initial variables by translation, the Laurent polynomials are unchanged if we replace $(a,b,c)$ with $(a+x,b+y,c+z)$ where $x,y,z\in \mathbb{Z}$ and $x+y+z=0$.  Equivalently we are translating in $(\mathbb{Z}/2)^3$ by integral linear combinations of $(1,-1,0)$ or $(1,0,-1)$, as well as $(0,1,-1)$ which is in the span of the first two.  Hence, without loss of generality, we may focus on the formulas for $A(0,0,n+2)$, 
$A(0, \frac{1}{2},n+\frac{1}{2})$,
$A(\frac{1}{2}, 0,n+\frac{1}{2})$, and
$A(\frac{1}{2}, \frac{1}{2},n)$ for $n \in \mathbb{Z}$ (e.g. we have let $n+2 = a+b+c$ for the case of $A(a,b,c)$ and $a,b,c\in \mathbb{Z}$).  

\begin{remark} Note that computing $A(0,0,n+2)$ in terms of $\Gamma(Z_2)$ is equivalent to computing $A(0,0,0)$ in terms of $\Gamma(Z_{-n})$, with faces labeled by 
$$\mathcal{I}_{-n} = \{A(i,j,k) : -n-2 \leq i + j + k \leq -n\} ~~\cup~~ 
\{A(i+\frac{1}{2},j+\frac{1}{2}, k): i+j+k=-n-2\}$$ $$~~\cup~~  \{A(i+\frac{1}{2},j, k+\frac{1}{2}): i+j+k=-n-2\} ~~\cup~~  \{A(i,j+\frac{1}{2}, k+\frac{1}{2}): i+j+k=-n-2\},$$ which is the case discussed in \cite{KP}.
\end{remark}

We define a projection $\varphi$ from $\mathcal{I}_2$ to $\{x_1,x_2,\dots, x_6\}$ that maps this infinite set of initial variables to a finite set in a doubly-periodic way. 
$$\varphi\left(A(a,b,c)\right) = 
\begin{cases} 
x_1 \mathrm{~if~} (a,b,c) = (i+\frac{1}{2},j+\frac{1}{2},k) \mathrm{~with~}i,j,k \in \mathbb{Z}, \mathrm{~and~} i+j+k  = 0 \\ 
x_2 \mathrm{~if~} (a,b,c) = (i, j+\frac{1}{2},k+\frac{1}{2}) \mathrm{~with~}i,j,k \in \mathbb{Z}, \mathrm{~and~} i+j+k  = 0 \\ 
x_3 \mathrm{~if~} (a,b,c) = (i+\frac{1}{2},j,k+\frac{1}{2}) \mathrm{~with~}i,j,k \in \mathbb{Z}, \mathrm{~and~} i+j+k  = 0 \\ 
x_4 \mathrm{~if~} a,b,c \in \mathbb{Z}, \mathrm{~and~} a+b+c  = 0 \\
x_5 \mathrm{~if~} a,b,c \in \mathbb{Z}, \mathrm{~and~} a+b+c  = 2 \\
x_6 \mathrm{~if~} a,b,c \in \mathbb{Z}, \mathrm{~and~} a+b+c  = 1
\end{cases},$$
Under this projection, the $4-6-12$ graph with the face labelings of Figure \ref{fig:KP4612} becomes the brane tiling $\mathcal{T}_4$ with the labeling as in Figure \ref{fig:Model4-contour}.  For example, we have the equalities $\varphi\left(A(1/2,1/2,0)\right) = z_{-1,1,0}^{(4)} = x_1, ~~~~\varphi\left(A(0,1/2,1/2)\right) = z_{0,-1,0}^{(4)} = x_2, ~~~\varphi\left(A(1/2,0,1/2)\right) = z_{1,0,0}^{(4)} =  x_3, ~~~~\varphi\left(A(0,0,0)\right) = z_{0,0,-1}^{(4)} = x_4, ~~~~\varphi\left(A(0,0,2)\right) = z_{0,0,1}^{(4)} = x_5, \mathrm{~~and~~} \varphi\left(A(0,0,1)\right) = z_{0,0,0}^{(4)} = x_6.$ Hence, we recover the initial variables corresponding to the deformed prism $\Delta_4$.

\begin{proposition} \label{prop:AZ}
Comparing the coordinates of \cite{KP} to Theorem \ref{thm:z4}, and breaking into cases based on the parity of $n$, we have the identities
$$\varphi\left(A(0,0,2N+1)\right)  = z_{0,0,2N}^{(4)} = x_6 D^{N(N-1)} E^{N^2}$$ 
$$\varphi\left(A(0,0,2N+2)\right)  = z_{0,0,2N+1}^{(4)} = x_5 D^{N^2} E^{N(N+1)}$$ 
$$\varphi\left(A(1/2,1/2,2N)\right)  = z_{-1,1,2N}^{(4)} = x_1 D^{N(N-1)} E^{N^2}$$ 
$$\varphi\left(A(1/2,1/2,2N+1)\right)  = z_{1,-1,2N+1}^{(4)} = \frac{\widetilde{Y}_4''}{x_1x_4} D^{N^2} E^{N(N+1)}$$ 
$$\varphi\left(A(0,1/2,2N+1/2)\right)  = z_{0,-1,2N}^{(4)} = x_2 D^{N(N-1)} E^{N^2}$$ 
$$\varphi\left(A(0,1/2,2N+3/2)\right)  = z_{0,1,2N+1}^{(4)} = \frac{\widetilde{Y}_4''}{x_2x_4} D^{N^2} E^{N(N+1)}$$ 
$$\varphi\left(A(1/2, 0,2N+1/2)\right)  = z_{1,0,2N}^{(4)} = x_3 D^{N(N-1)} E^{N^2}$$ 
$$\varphi\left(A(1/2, 0,2N+3/2)\right)  = z_{-1,0,2N+1}^{(4)} = \frac{\widetilde{Y}_4''}{x_3x_4} D^{N^2} E^{N(N+1)}$$ 
where $D = \frac{x_4}{x_5}$, $E =
 \frac{x_6^6 + 2x_1x_2x_3x_6^3 + x_1^2x_2^2x_3^2 + 3x_4x_5x_6^4 + 3x_1x_2x_3x_4x_5x_6 + 3x_4^2x_5^2x_6^2 + x_4^3x_5^3}{x_1x_2x_3x_4^2x_6}$, and \\
 $\widetilde{Y}_4'' = x_1x_2x_3 + x_4x_5x_6 + x_6^3$.
\end{proposition}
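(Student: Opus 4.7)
The plan is to prove Proposition \ref{prop:AZ} in two stages. First, I would establish the pairing between Kenyon--Pemantle coordinates $(a,b,c)$ and the $\mathbb{Z}^3$-parameters $(i,j,k)$ of the Model 4 cluster algebra under the projection $\varphi$. Second, I would evaluate the explicit formula of Theorem \ref{thm:z4} at each of the eight listed $(i,j,k)$-tuples and simplify to recover the claimed closed form.

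For stage one, the key observation is that the hexahedron recurrence of \cite{KP}, when projected under $\varphi$ onto the finite alphabet $\{x_1,\dots,x_6\}$, should match a particular sequence of toric mutations on $(Q_4,W_4)$. The six base-case identifications recorded in the paragraph preceding the proposition (namely $\varphi(A(1/2,1/2,0)) = z_{-1,1,0}^{(4)} = x_1$, and so on) already fix the correspondence on the initial seed $\Delta_4$. From there I would argue that advancing by one step in the $c$-direction of the hexahedron recurrence corresponds to a prescribed two-step toric mutation on $(Q_4,W_4)$, whose action on $\mathbb{Z}^3$ is read off from Section \ref{sec:param} and Figure \ref{fig:ModelIV}. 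This accounts for both the increment $k\mapsto k+1$ and the sign-flip $(i,j)\mapsto(-i,-j)$ that is visible in the statement when we pass between consecutive parity slabs (compare, for example, $z_{-1,1,2N}^{(4)}$ with $z_{1,-1,2N+1}^{(4)}$).

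For stage two, I would substitute each tuple into
\[
z_{i,j,k}^{(4)} = Y_r'' \, A^{\lfloor (i^2+ij+j^2+1+i+2j)/3\rfloor} \, B^{\lfloor (i^2+ij+j^2+1+2i+j)/3\rfloor} \, C^{\lfloor (i^2+ij+j^2+1)/3\rfloor} \, D^{\lfloor (k-1)^2/4\rfloor} \, E^{\lfloor k^2/4\rfloor}
\]
and compute directly. For instance, at $(0,0,2N)$ we have $i^2+ij+j^2+1=1$, so the $A$-, $B$-, $C$-exponents all vanish; $\lfloor(2N-1)^2/4\rfloor=N(N-1)$ and $\lfloor(2N)^2/4\rfloor=N^2$; and $2(i-j)+3k=6N\equiv 0\pmod 6$ gives $r=6$, $Y_6''=x_6$, producing $x_6 D^{N(N-1)}E^{N^2}$. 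At $(-1,1,2N)$ we get $i^2+ij+j^2+1=2$, $A$-exponent $1$, $B$- and $C$-exponents $0$, and $r=2$ from $2(-2)+6N\equiv 2\pmod 6$; the combination $Y_2''\cdot A=x_2\cdot(x_1/x_2)=x_1$ delivers $x_1 D^{N(N-1)}E^{N^2}$. At $(1,-1,2N+1)$ the $B$-exponent becomes $1$ with $B=x_3/x_1$, while $r=4$ and $Y_4''=(x_1x_2x_3+x_4x_5x_6+x_6^3)/(x_3x_4)$, so $Y_4''\cdot B$ telescopes to $\widetilde{Y}_4''/(x_1x_4)$. The four half-integer families are entirely analogous: shifting $(i,j)$ by a unit vector bumps exactly one of the $A$- or $B$-exponents by one, rotates $r$ through the appropriate residue, and either keeps $Y_r''$ equal to one of $x_2$, $x_3$ or yields the denominator correction $\widetilde{Y}_4''/(x_j x_4)$.

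The hard part will be stage one, namely cleanly matching the hexahedron cube-flip rule with a toric mutation sequence on $(Q_4,W_4)$ and verifying that the $\mathbb{Z}^3$-transformation really is the one recorded in the eight indexing formulas (in particular the sign-flip in $(i,j)$). Once this correspondence is pinned down, stage two is routine floor-function arithmetic together with the simplifications $Y_r''\cdot A^{\varepsilon_A}\cdot B^{\varepsilon_B}$ indicated above, and the proposition follows.
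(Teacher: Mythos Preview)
Your two-stage plan is exactly the paper's: first identify the hexahedron step with a toric mutation sequence on $Q_4$ and read off its action on $\mathbb{Z}^3$, then plug the resulting eight $(i,j,k)$ families into Theorem \ref{thm:z4}. Your stage-two computations are correct and match what the paper dismisses as ``simple algebra''.

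The one concrete error is in stage one: a single hexahedron (superurban renewal) step is \emph{not} a two-step toric mutation on $(Q_4,W_4)$. As in \cite[Fig.~6]{KP}, superurban renewal is a composite of six urban renewals, and the paper realizes it by the six-mutation word $\mu_1\mu_4\mu_2\mu_1\mu_3\mu_2$, passing through Models $3,2,1,2,3$ and back to Model $4$. The resulting Model~4 quiver is only isomorphic to $Q_4$ after the vertex permutation $(123564)$ and reversal of all arrows; it is this composite $\mu_1\mu_4\mu_2\mu_1\mu_3\mu_2\circ(123564)$ whose square shifts the third coordinate by $2$ and whose single application produces the $(i,j)\mapsto(-i,-j)$, $k\mapsto k+1$ effect you correctly anticipated. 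Figure \ref{fig:ModelIV} (which only records $\mu_1\mu_4\mu_3$) will not give you this; you need the full six-step geometric picture (the paper's Figure \ref{fig:IJ}). Once you replace ``two-step'' by this six-step sequence plus permutation and track the $\mathbb{Z}^3$-coordinates through it, your argument goes through.
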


\begin{proof}
To get the desired identities, we apply superurban renewal, as illustrated in \cite[Fig. 6]{KP}.  Under the projection $\varphi$, we mimic this sequence of six urban renewals by the cluster mutation sequence $\mu_1, \mu_4, \mu_2, \mu_1, \mu_3, \mu_2$ in that order, starting from $\Delta_4$.  This leads to a sequence of quivers of Models 3, 2, 1, 2, 3, and back to Model 4.  Translating the initial configuration of $\Delta_4$ by an arbitrary amount in $\mathbb{Z}^3$ yields a local configuration of the form $$\widetilde{\Delta_4} = [(i-1,j+1,k), (i,j-1,k), (i+1,j,k), (i,j,k-1), (i,j,k+1),(i,j,k)].$$
See Figure \ref{fig:IJ} (a). The mutation sequence induces an action on clusters that is visualized in Figures \ref{fig:IJ} (b)-(g) with Figure \ref{fig:IJ} (g) showing the cluster associated to the final Model 4 quiver.  The final quiver of Model 4 is isomorphic to $Q_4$ up to the permutation $(34)(56)$ applied to the vertices and the reversal of all arrows.  

Furthermore, because of the symmetry of this quiver, see Figure \ref{fig:dP3QuiverModels} (Right), we can also apply the permutation $(123)(45)$, which leaves the quiver unchanged.  Noting that $(123)(45)\cdot (34)(56) = (123564)$, we see that
the combined transformation $\mu_1\mu_4\mu_2\mu_1\mu_3\mu_2 \circ (123564)$ acts as an involution that reverses all arrows of $Q_4$. On the level of clusters, applying this transformation twice in a row yields a cluster whose $\mathbb{Z}^3$-parameterization agrees with the original cluster except that the third coordinate of all six entries has been increased by $2$. 

\begin{figure}
(a) \includegraphics[width=1.8in]{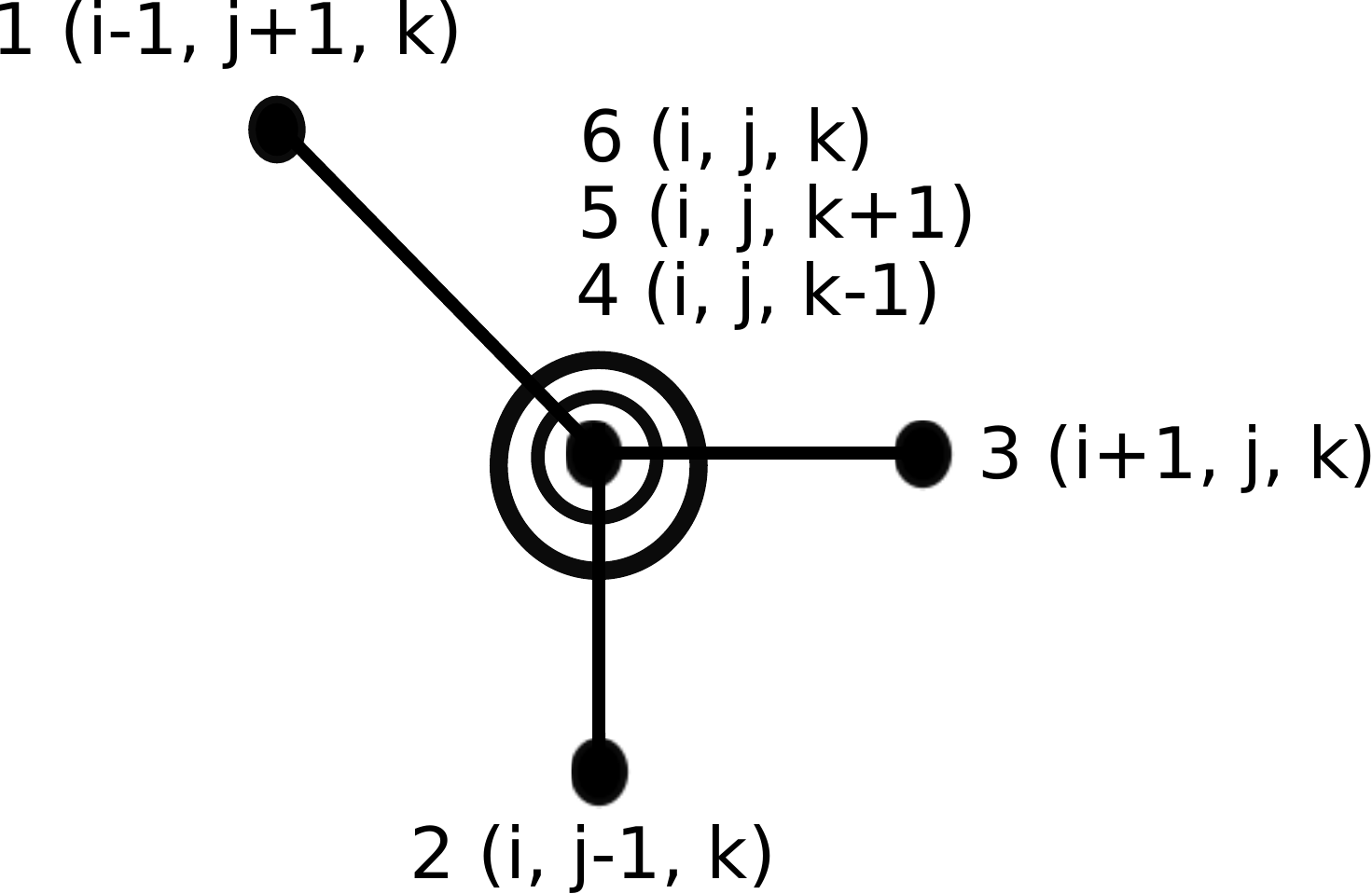} \hspace{9em}
(g) \includegraphics[width=2in]{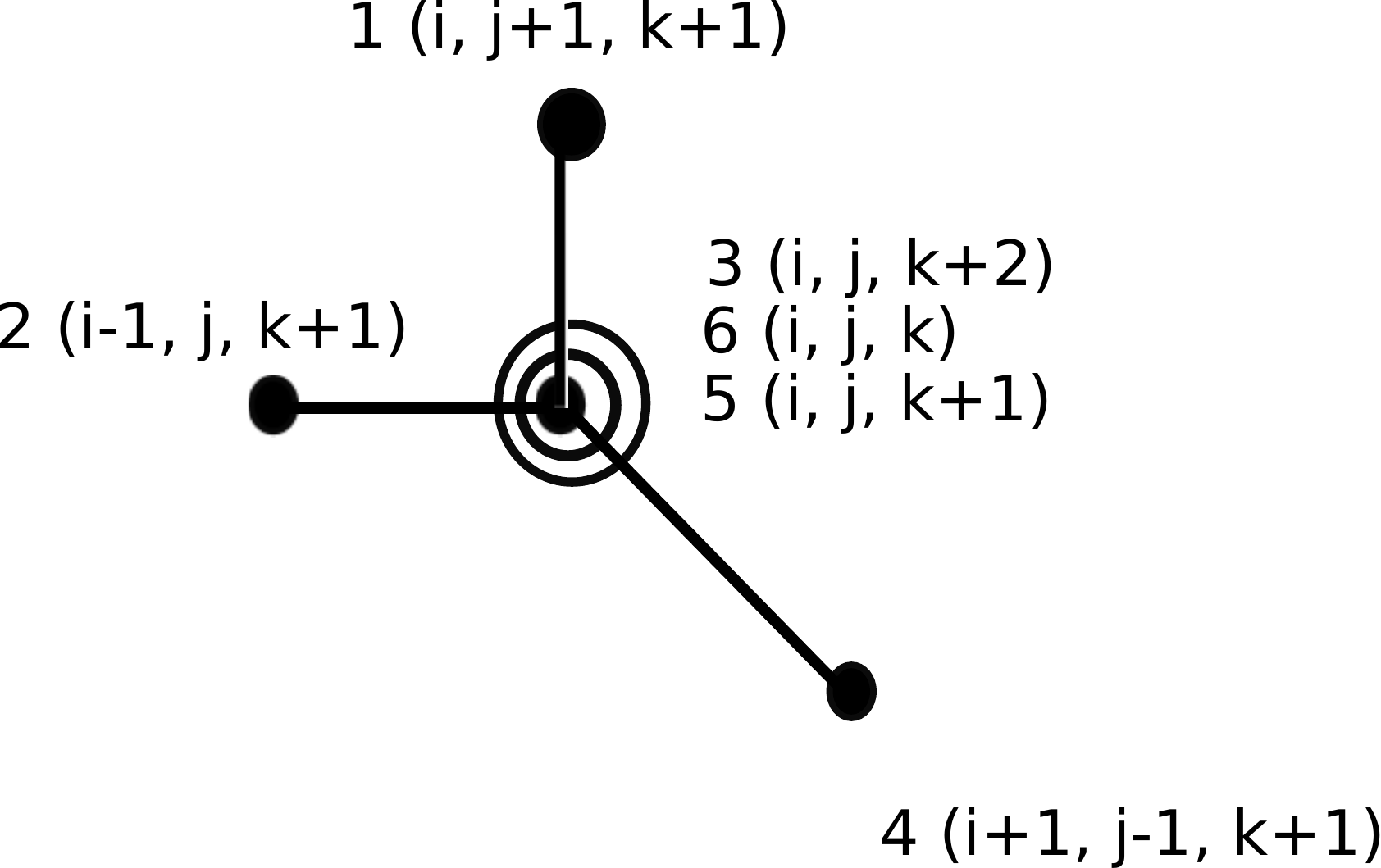} \\ \vspace{2em}
(b) \includegraphics[width=1.5in]{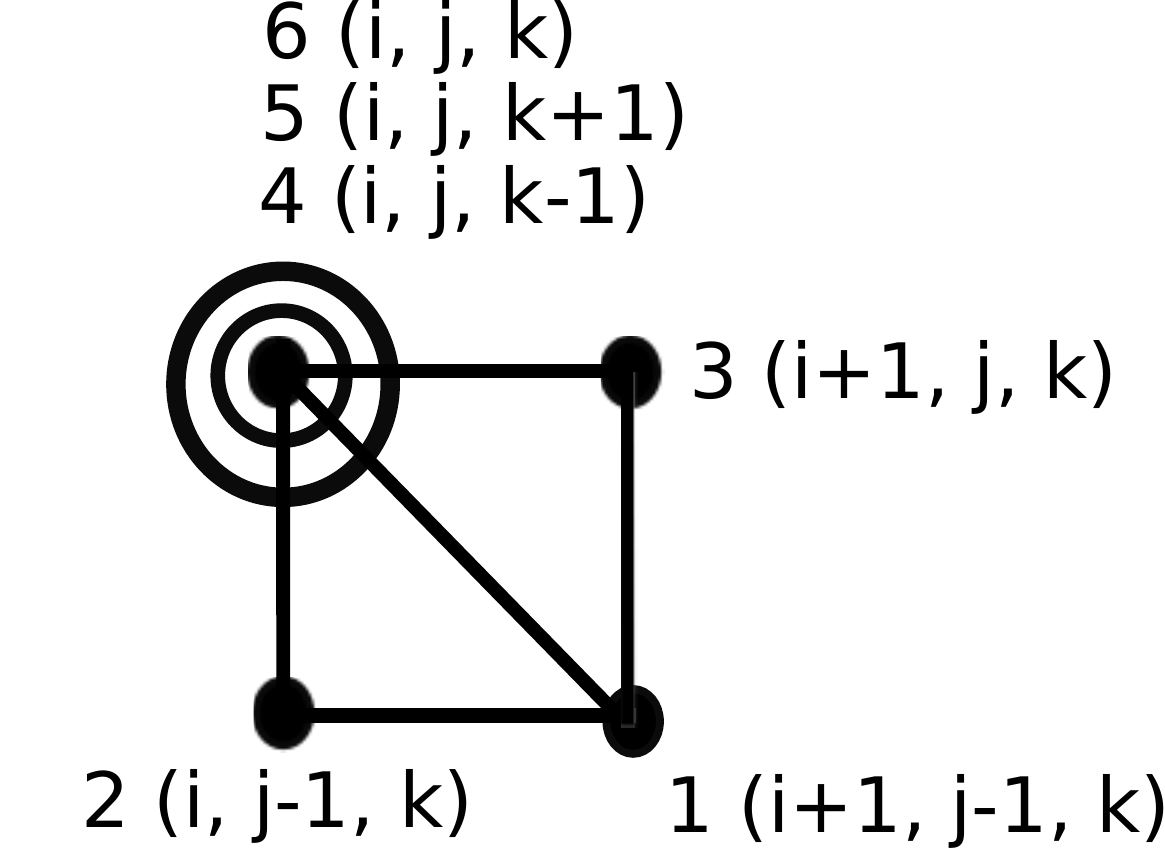} \hspace{0.5em}
(d) \includegraphics[width=1.5in]{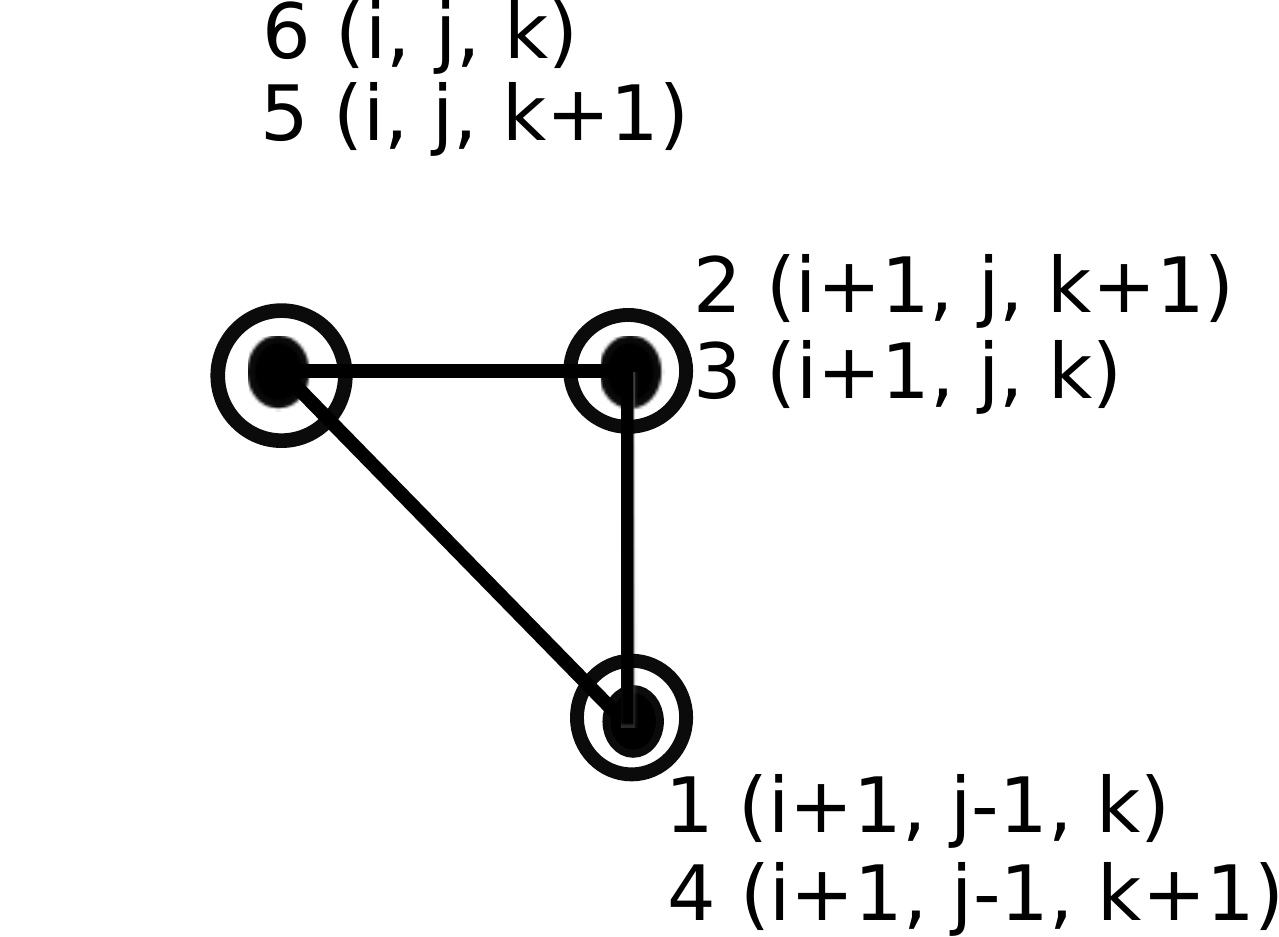} \hspace{0.5em}
(f) \includegraphics[width=1.7in]{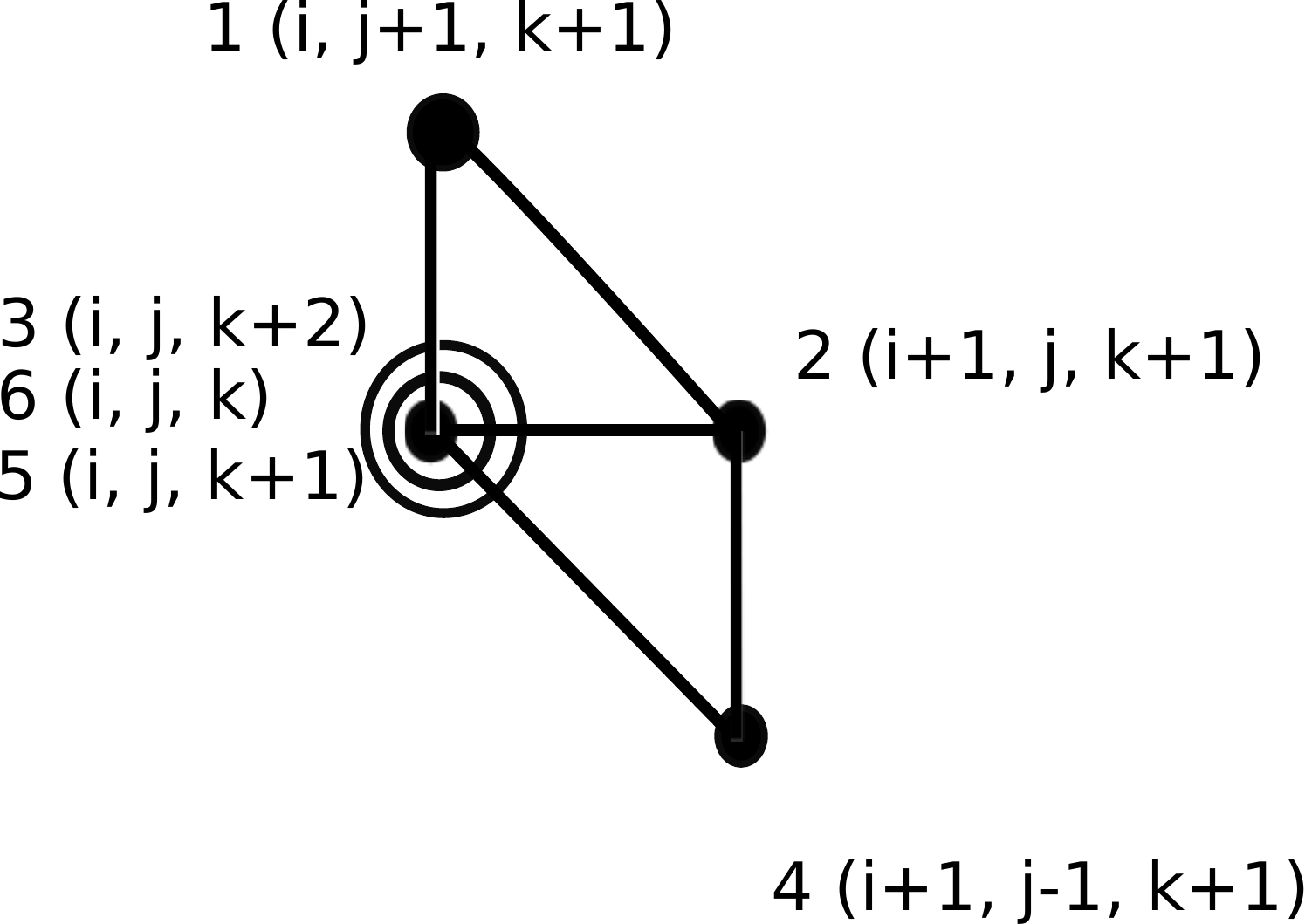} \\ \vspace{2em}
(c) \includegraphics[width=1.8in]{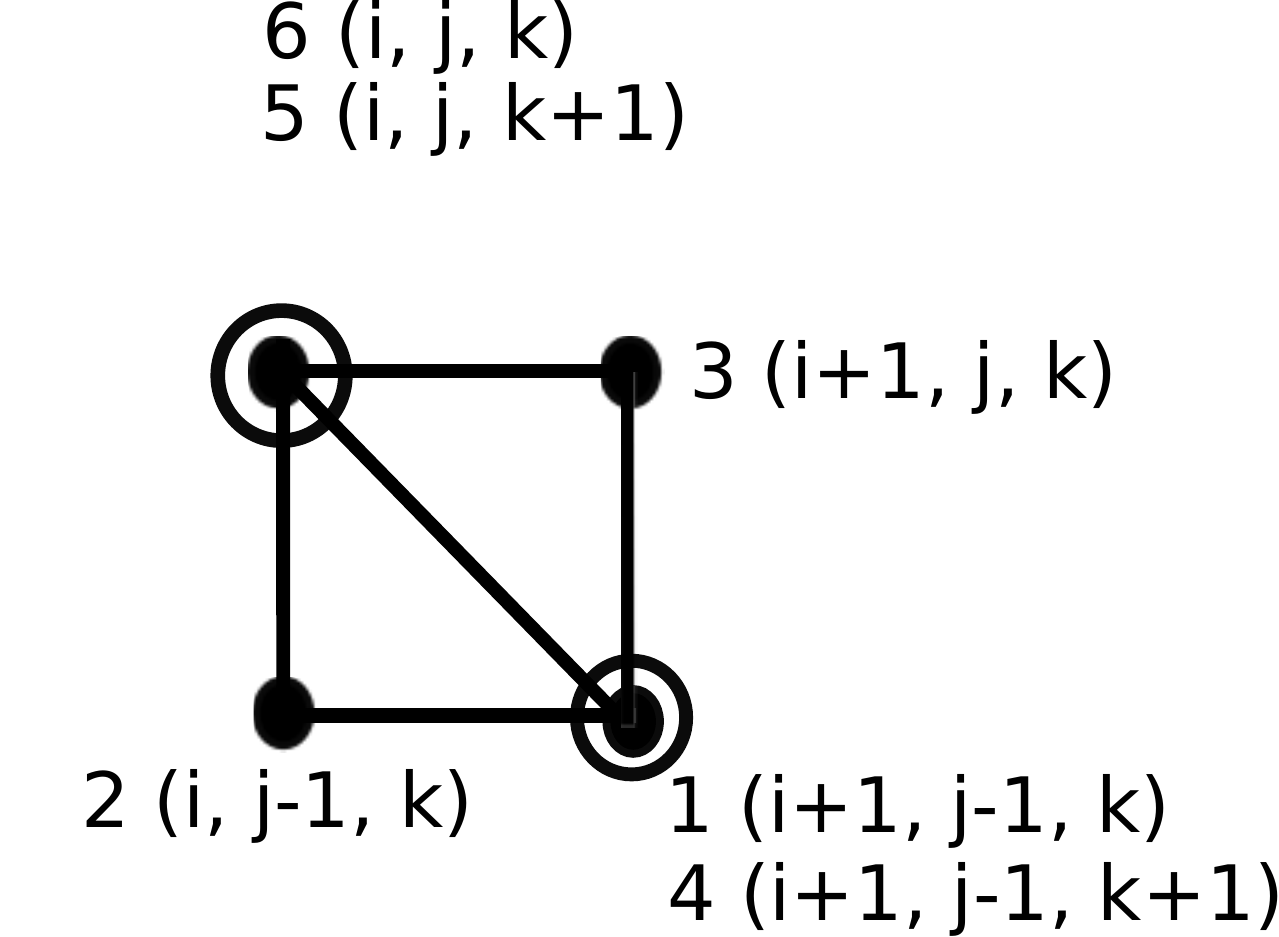} \hspace{2em}
(e) \includegraphics[width=1.8in]{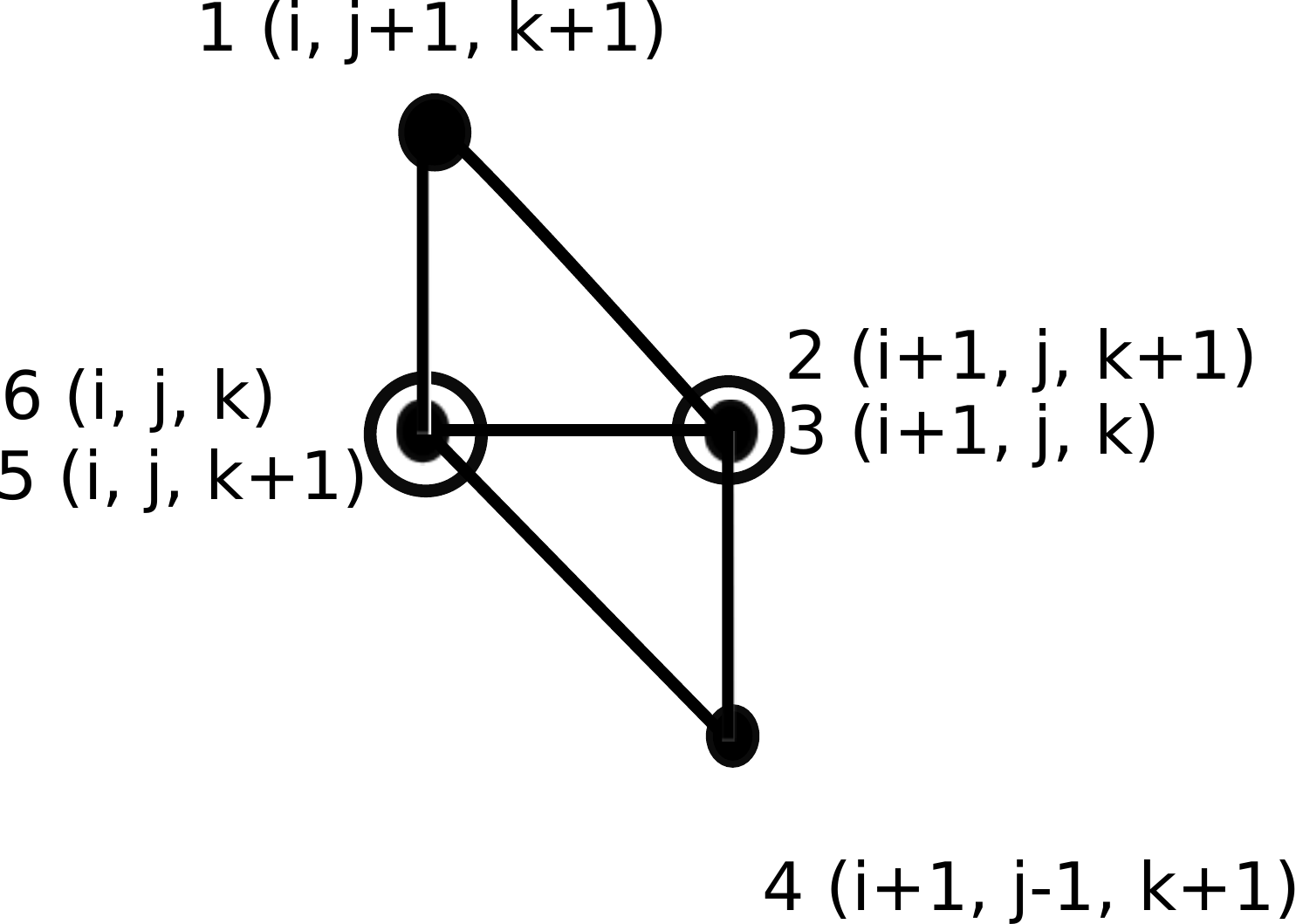}
\caption{Projection of $\widetilde{\Delta_4}$ to the $(i,j)$-plane and the geometric transformations associated to the mutation sequence $\mu_1 \mu_4 \mu_2 \mu_1 \mu_3 \mu_2$.}
\label{fig:IJ}
\end{figure}

Combining together the odd and even cases, we see that starting in the $dP_3$ cluster algebra with the initial cluster parameterized by $\Delta_4$ and iterating superurban renewal yields toric cluster variables whose $\mathbb{Z}^3$-parametrization is one of the following eight possibilities (where $N \in \mathbb{Z}$):
$$(-1,1,2N), ~(0,-1,2N), ~(1,0,2N), ~(0,0,2N),$$ 
$$(1,-1,2N+1), ~(0,1,2N+1), ~(-1,0,2N+1),  ~(0,0,2N+1).$$
Focusing on the toric cluster variables parameterized by points in $\mathbb{Z}^3$ of each of these eight one-parameter families yields the leftmost equality in each identity appearing in Proposition \ref{prop:AZ}.  The rightmost equality in each of these identities follows accordingly as an application of Theorem \ref{thm:z4} and simple algebra.
\end{proof}

\begin{remark} \label{rem:AB} Up to $(x_1/x_2/x_3)$-symmetry, the possible toric variables reached by superurban renewal come in four different one-parameter families: 
{\small $$\varphi\left(A(0,0,2N+1)\right), \varphi\left(A(0,0,2N+2)\right), \varphi\left(A(1/2,1/2,2N)\right), \mathrm{~and~} \varphi\left(A(1/2,1/2,2N+1)\right).$$}  

\vspace{-1em} \noindent Furthermore, letting $x_1=\dots=x_6=1$, the resulting one-parameter families agree with $A_{2N+1}$, $A_{2N+2}$, $B_{2N}$, and $B_{2N+1}$, respectively, where the integer sequences $A_n$ and $B_n$ are defined in \cite[Sec. 6.1]{KP} by the initial conditions $A_0=A_1=A_2=B_0=1$ and the recurrences (for $n \geq 1$)
$$A_{n+2} =  \frac{B_{n-1}^6 + 2A_{n}^3B_{n-1}^3 + 3A_{n-1}A_{n}A_{n+1} B_{n-1}^3 + (A_{n-1}A_{n+1}+A_{n}^2)^3}{A_{n-1}^2 B_{n-1}^3}, \mathrm{~and~}$$
$$B_n = \frac{A_n^3 + A_{n-1}A_nA_{n+1} + B_{n-1}^3}{A_{n-1}B_{n-1}}.$$
In fact, the first and third families coincide and we obtain $A_{2N+2} = 14^{N(N+1)}$, $A_{2N+1} = B_{2N} = 14^{N^2}$, and $B_{2N+1} = 3\times 14^{N(N+1)}$, as in Equation (6.3) of \cite{KP}. \end{remark}

In addition to an algebraic interpretation, Kenyon and Pemantle give a combinatorial interpretation for the $(\mathbb{Z}/2)^3$-coordinates in terms of $\mathcal{I}_2$ using {\bf taut double-dimer configurations} for two of the eight above families, i.e. for $A(0,0,2N)$ and for $A(0,0,2N+1)$, which can be thought of as the combined one-parameter family $A(0,0,n+2)$.
Their combinatorial model is defined in terms of height functions and utilizes an initial configuration $M_0$ on an infinite graph $G_\infty$ (see Figure \ref{fig:KP9}, which is Figure 9 from \cite{KP}).  They define a double-dimer configuration on a graph $G$ to be {\bf taut} if the paths induced by the double-dimers looks like $M_0$ away from the center of $G$.  By applying a sequence of superurban renewals to $G_\infty$, starting at the center, then at the three hexagaonal faces closest to the center, and so on, we obtain a new graph equalling an intersection of a subgraph of the $4-6-12$ graph with $G_\infty$.  Hence, we are able to translate Kenyon and Pemantle's notion of taut double-dimers to our setting.  

In particular, this specific family for $z_{0,0,n+1}^{(4)}$, corresponding to $\varphi\left(A(0,0,n+2)\right)$, can be constructed by placing an equilateral triangle (with side lengths $n$) on $\mathcal{T}_4$ centered around a hexagonal face labeled $4$ (resp. dodecagonal face labeled $6$ or hexagonal face labeled $5$) if $n$ is congruent to $1$ (resp. $2$ or $3$) modulo $3$.  We then consider mixed dimer/double-dimer configurations such that the vertices on the boundary are all $1$-valent and the vertices in the interior are $2$-valent.  The taut condition then restricts us to configurations with paths between the boundary vertices which looks like $n$ nested arcs\footnote{Sometimes this is called the ``rainbow condition'' and agrees with the tripartite pairing boundary condition of \cite[Sec. 6]{KW-tripart}; we thank Helen Jenne for turning our attention to \cite{KW-tripart}.}  along each of the three corners of the equilateral triangle.  See the bottom rows of Figures \ref{fig:A003}, \ref{fig:A004}, and \ref{fig:A005} for the relevant models for $A(0,0,3)$, $A(0,0,4)$, and $A(0,0,5)$, respectively.

Our combinatorial result, Theorem \ref{thm:formula4}, does not directly apply to the cases of $z_{0,0,n+1}^{(4)}$ where $n\geq 1$ since the correspoinding contours $$\mathcal{C}_4(n+1,-n-1,n+1,-n,n,-n)$$ are self-intersecting, as in Figure \ref{fig:contours} (Right).  However, by directly comparing with Theorem 4.1 of \cite{KP} and using the dictionary from Proposition \ref{prop:AZ}, we can make sense of the combinatorics of this specific one-dimensional family of self-intersecting contours by the following rule (which is a variant of Definition \ref{def:contour4}, which applied in the case of contours without self-intersections):

\begin{definition} \label{def:self-int}
Given a contour $\mathcal{C}_4(a,b,c,d,e,f)$ which has the sign pattern given as $(+,-,+,-,+,-)$, where further we require $a$, $b$, and $c$ are nonzero, we define the core subgraph $\mathcal{G}_4(a,b,c,d,e,f)$ of the Model $4$ brane tiling $\mathcal{T}_4$ by the following.

Step 0: Even though it was not a sign pattern handled in \cite{lai', lai}, we can apply a reverse of the coordinate changes discussed in Section \ref{sec:Mod4}, and consider the $6$-tuple $(A,B,C,D,E,F) = (a-1,b+1,c-1,d,e,f)$.  Note that the entries of this $6$-tuple alternate in sign and we have $A+B+C+D+E+F=0$.  Compare Figures \ref{fig:B2-3} and \ref{fig:B2-all-rotations} for an example of this coordinate change and the role of forced edges along sides $b$, $c$, and $f$.

Step 1: Superimpose the modified contour $\mathcal{C}_4'(A,B,C,D,E,F)$ onto $\mathcal{T}_4$ starting from the {\bf barycenter of the hexagon labeled $5$} and follow six straight lines as in Figures \ref{fig:B3-3}, \ref{fig:B4-1}, or \ref{fig:B5-1} which because of this specific sign pattern will result in the modified contour consisting of two overlapping triangles.  This will lead to two regions of vertices, one with vertices counted once and an inner region with vertices counted twice (including on the border of the two regions).

Step 2: For any side of positive (resp. negative) length, we decrease the multiplicity by $1$ of all black (resp. white) vertices along that side, as well as all edges along that side.
We define $\mathcal{G}_4(a,b,c,d,e,f)$ to be the subgraph induced on the remaining vertices (and recording which vertices have multiplicity one and which have multiplicity two).

Using the fact that the modified contour $\mathcal{C}'_4(A,B,C,D,E,F)$ is built as two overlapping triangles, the innermost triangle of $\mathcal{G}_4(a,b,c,d,e,f)$ resembles a graph in the family for $\varphi\left(A(0,0,n+2)\right)$ except that it may have vertices of multiplicity two on its boundary.  In fact, one of its three boundaries will again have all of its vertices of multiplicity one, and it follows from the construction that this boundary will have an even number of such vertices.   We define the {\bf Taut Condition} analogously to the above for $\varphi\left(A(0,0,n+2)\right)$ so that the multiplicity one vertices along this special boundary are always connected to the multiplicity one vertices on the side nearer to it and in a non-crossing connectivity pattern.
\end{definition}

See the top rows of Figures \ref{fig:A003}, \ref{fig:A004}, and \ref{fig:A005} for the constructions for $A(0,0,3)$, $A(0,0,4)$, and $A(0,0,5)$, respectively.

\begin{remark}
For the self-intersecting contours considered in this section, i.e. with the sign pattern $(+,-,+,-,+,-)$ where $a$, $b$, and $c$ are nonzero, the corresponding modified contours are especially easy to describe since any two sides meet at an acute ($60$ degree) angle as opposed to an obtuse ($120$ degree) angle, which occurs when a contour has two sides in a row of the same sign.
\end{remark}

\begin{remark}The taut condition in Definition \ref{def:self-int} differs from that in \cite[Sec. 6]{KW-tripart} or \cite{KP} since it is not sufficient to forbid connections between two vertices on the same side of the triangle and we have vertices of both multiplicity one and two on the boundary between the dimer and double-dimer regions.  In particular, a path originating from a vertex of multiplicity one on the boundary of the double-dimer region may terminate in the interior of the dimer region rather than another multiplicity one vertex on the boundary between the two regions.  See Figure \ref{fig:forbidden-B3}.  In theory, this would allow one of the patterns in the bottom row of this figure.  But, by looking at small examples, we must forbid a path crossing from left to right (or vice-versa) so that our enumeration agrees with our formulas.
\end{remark}

Following Theorem 4.1 of \cite{KP}, assigning weights to the taut configurations of $\mathcal{G}_4(n+1,-n-1,n+1,-n,n,-n)$ using methods analogous to that in Definition \ref{def:CM4} and Theorem \ref{thm:formula4}, except that closed cycles of size $\geq 4$ in double-dimer configurations come with a coefficient of $2$, yields the Laurent expansions for $\varphi{(A(0,0,n+2)} = z_{0,0,n+1}^{(4)}$.

\begin{figure}
\includegraphics[width=2.7in]{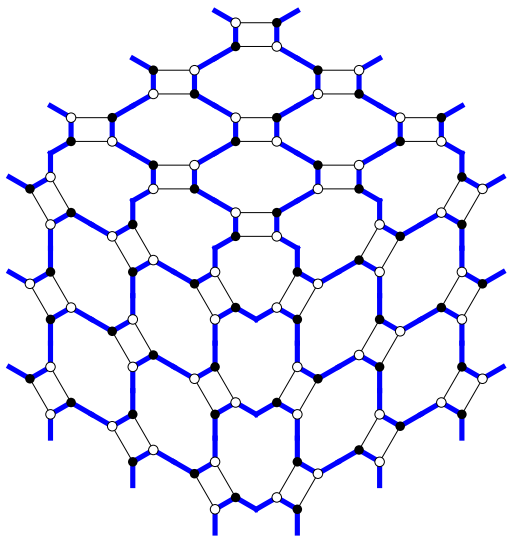}
\caption{The initial configuration $M_0$ on the infinite graph $G_\infty$ as illustrated in \cite[Fig. 9]{KP}.  Taut double-dimer configurations agree with $M_0$ outside of the center.}
\label{fig:KP9}
\end{figure}

\begin{figure}
\includegraphics[width=5in]{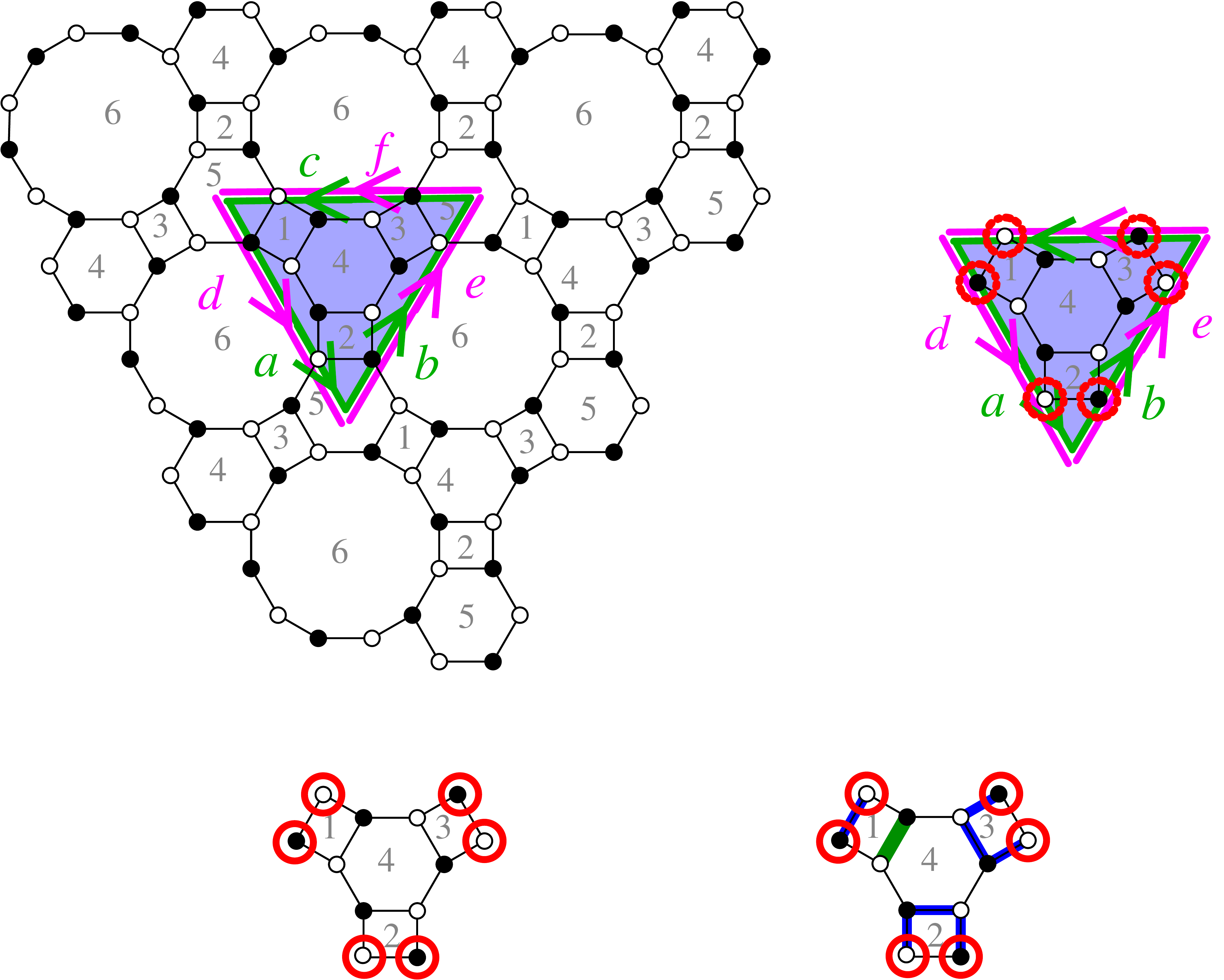}
\caption{Step-by-step construction of the double-dimer configuration from the self-intersecting contour $\mathcal{C}_4(2,-2,2,-1,1,-1)$ corresponding to $z_{0,0,2}^{(4)}$.  We use the modified contour $\mathcal{C}'_4(1,-1,1,-1,1,-1)$. Circled vertices in the bottom row are multiplicity $1$ while the vertices on the hexagonal face $4$ have multiplicity $2$.  The bottom right figure illustrates one of the possible taut double-dimer configurations.  The thickened green edge here, and in the sequel, is a doubled edge.}
\label{fig:A003}
\end{figure}

\begin{figure}
\includegraphics[width=6in]{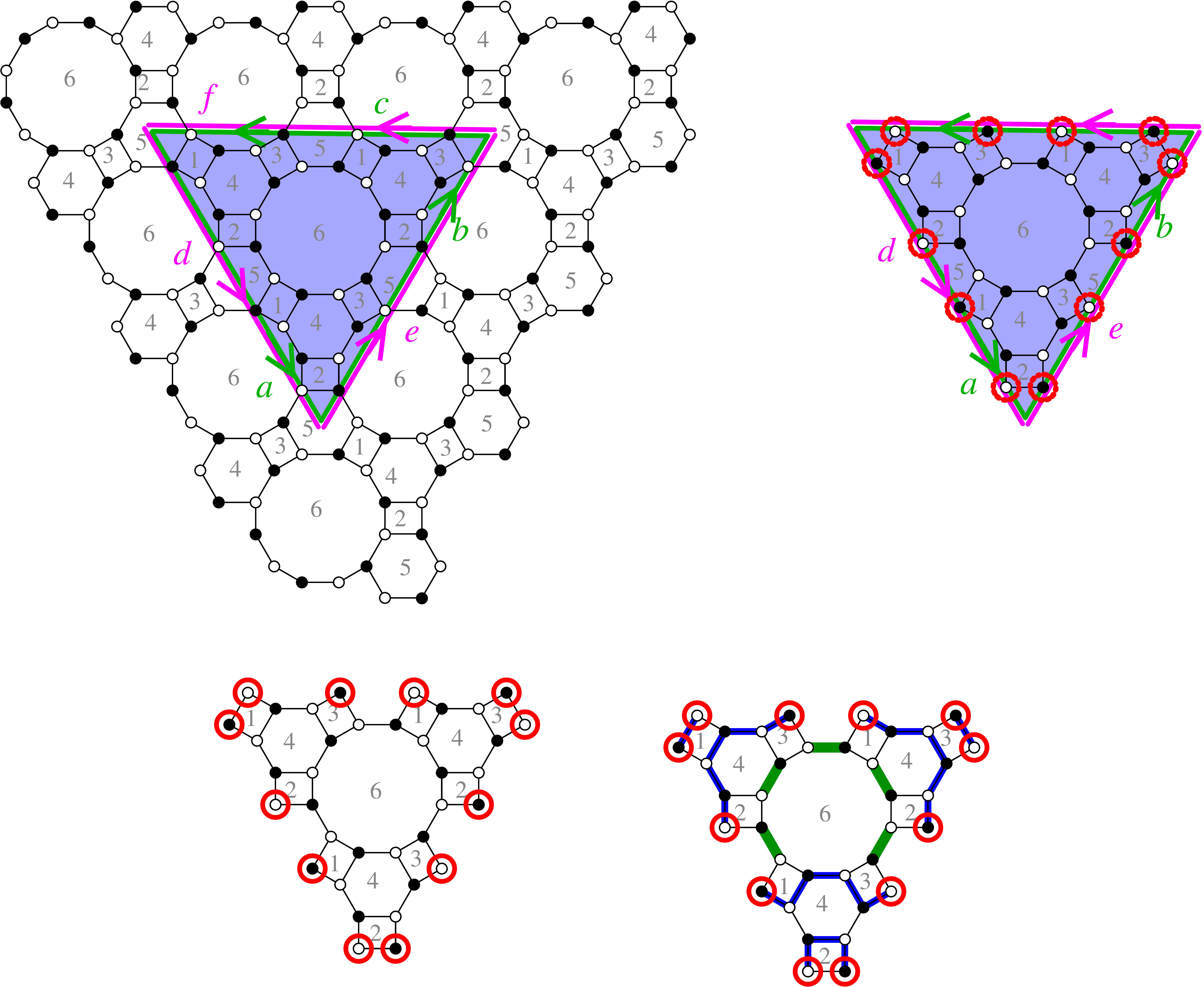}
\caption{Construction for $z_{0,0,3}^{(4)}$ using the self-intersecting contour $\mathcal{C}_4(3,-3,3,-2,2,-2)$ and the modified contour $\mathcal{C}'_4(2,-2,2,-2,2,-2)$.  A taut double-dimer configuration is shown in the bottom right.}
\label{fig:A004}
\end{figure}

\begin{figure}
\includegraphics[width=6in]{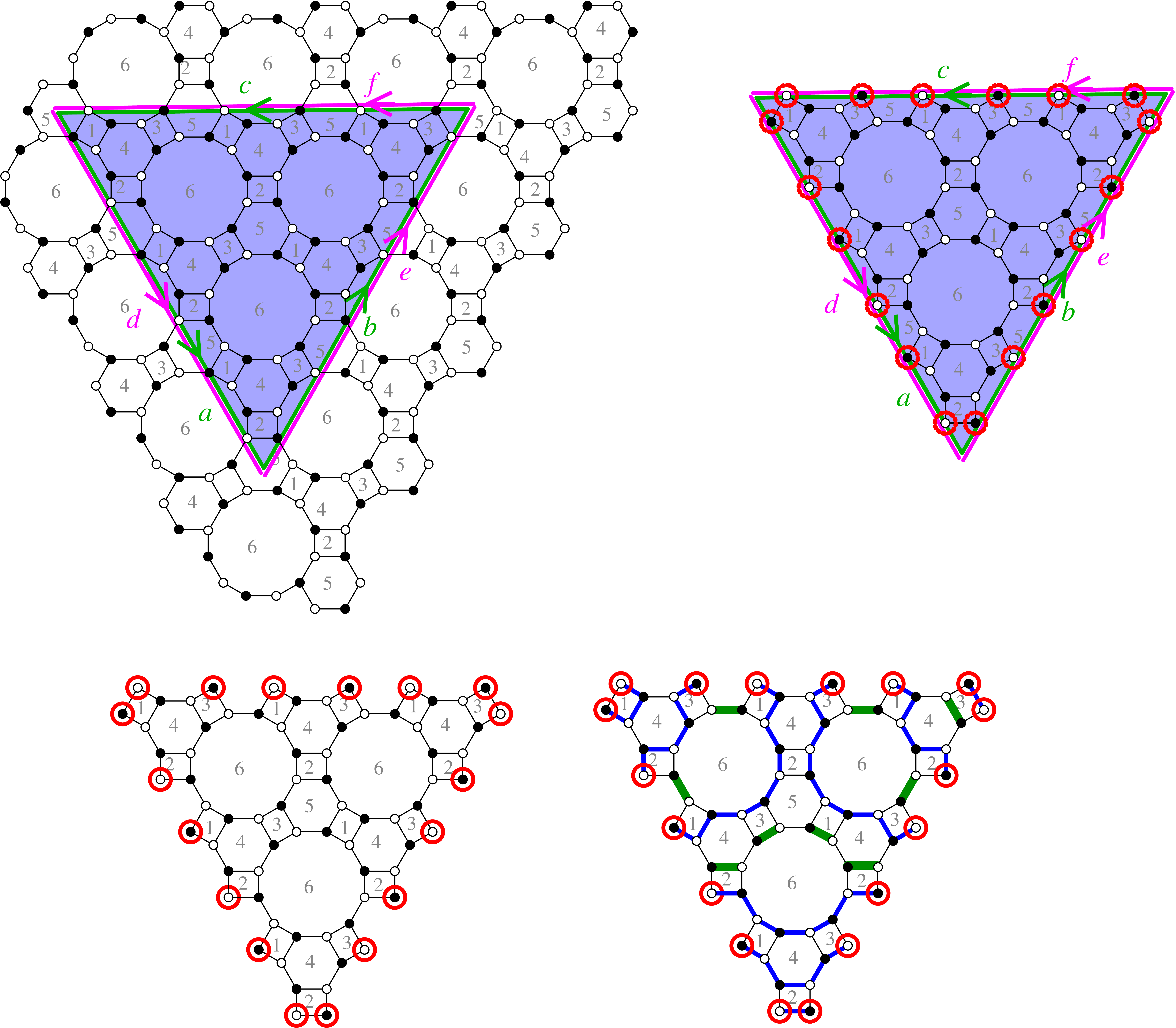}
\caption{Construction for $z_{0,0,4}^{(4)}$ using the self-intersecting contour $\mathcal{C}_4(4,-4,4,-3,3,-3)$ and the modified contour $\mathcal{C}'_4(3,-3,3,-3,3,-3)$.  A taut double-dimer configuration is shown in the bottom right.}
\label{fig:A005}
\end{figure}

\begin{conjecture} \label{conj:self-int}  The above rule for constructing a subgraph (with vertices of multiplicity one and two) for self-intersecting contours, as inspired by the one-dimensional family $\varphi\left(A(0,0,n+2)\right) = z_{0,0,n+1}^{(4)}$ studied in \cite{KP}, can be extended to all other self-intersecting contours, i.e. the three-dimensional space of toric cluster variables {\small $$\{z_{i,j,k}^{(4)}: \left(-k < i,j, i+j < k-1 \mathrm{~for~}k > 0\right) \mathrm{~~and~~} \left( k-1 < i,j, i+j < -k \mathrm{~for~}k \leq 0 \right) \},$$} 

\vspace{-1em} \noindent using mixed configurations where vertices are multiplicity $1$ except for a simply-connected region of vertices of multiplicity $2$ in the interior.  Further, there is an analogue of the taut condition for crossing the inner region, i.e. the double-dimer region.  Lastly, this rule degenerates into the usual rule for dealing with contours (using dimers only) when the self-intersection on the contour disappears, i.e. when $i$, $j$ or $i+j$ falls outside of $[-k,k-1]$ or $[k-1,k]$.
\end{conjecture}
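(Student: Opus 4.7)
The plan is to extend Definition \ref{def:self-int} beyond the one-parameter family $z_{0,0,n+1}^{(4)}$ treated in the previous section to the full three-dimensional region of self-intersecting contours, and then verify that the weighted enumeration of mixed dimer/double-dimer configurations on the resulting subgraphs matches the algebraic formula of Theorem \ref{thm:z4}. First, I would extend Step 0 of Definition \ref{def:self-int}: for a general self-intersecting contour $\mathcal{C}_4(a,b,c,d,e,f)$, the modified $6$-tuple $(A,B,C,D,E,F)$ should be obtained by redistributing a single unit of length so the entries sum to $0$ and still alternate in sign. The precise redistribution is forced by the requirement that the modified contour decomposes as a superposition of two simple closed contours whose overlap is a simply-connected region of $\mathcal{T}_4$. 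This decomposition defines the inner region of multiplicity-$2$ vertices and the outer annular region of multiplicity-$1$ vertices.

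Next, I would adapt the taut condition. On the interface between the two regions, vertices inherit multiplicity $1$, and a rainbow/tripartite pairing in the style of \cite{KW-tripart} should restrict the allowed connectivity patterns of the induced double-dimer paths. Because some multiplicity-$1$ vertices on the internal interface lie opposite multiplicity-$1$ vertices on the outer boundary across the dimer annulus, the key subtlety, flagged in the remark after Definition \ref{def:self-int}, is to forbid ``crossing'' paths that route through the dimer annulus in incompatible directions. I would formalize this by a non-crossing connectivity bijection between the interface boundary of the double-dimer region and the outer boundary of the core subgraph, which reduces to the rainbow condition when the modified contour is an equilateral triangle.

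The verification that this combinatorial model computes $z_{i,j,k}^{(4)}$ would proceed by induction on a mutation-depth away from the one-parameter family of Proposition \ref{prop:AZ}, using generalized superurban renewal in the style of \cite{KP} along with the toric mutation sequence $\mu_1\mu_4\mu_2\mu_1\mu_3\mu_2$ in all three of its natural rotational orientations. The base case is furnished by Theorem 4.1 of \cite{KP} together with Proposition \ref{prop:AZ}. The inductive step should propagate the mixed combinatorics in parallel with the algebraic transformation, weighted with the factor of $2$ for each closed cycle of length $\geq 4$ inside the double-dimer region, and using a covering monomial analogous to Definition \ref{def:CM4} but adjusted so that faces inside the multiplicity-$2$ region contribute with doubled exponents.

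The main obstacle is formulating the taut condition uniformly and then proving that it is preserved under superurban renewal. Unlike the pure double-dimer setting of \cite{KW-tripart}, the interface here carries vertices of two different multiplicities, so spurious crossings of the dimer annulus can occur and must be excluded by hand in the definition; it is not obvious a priori that there is a single rule that works for every $(i,j,k)$ in the self-intersection region. A secondary obstacle is the boundary behavior: one must check that as $(i,j,k)$ approaches the walls where $i$, $j$, or $i+j$ reaches $-k$, $k-1$, or $k$, the mixed construction degenerates correctly to the dimer-only construction of Definition \ref{def:contour4} and Theorem \ref{thm:formula4}, so that the two regimes patch together consistently along their common boundary.
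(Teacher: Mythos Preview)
The statement you are attempting to prove is explicitly labeled as a \emph{conjecture} in the paper, and the paper does not provide a proof of it. After stating the conjecture, the authors only offer supporting evidence: they work out the $B_n$ examples (Figures \ref{fig:B2-3}--\ref{fig:B5-1}), verify by hand that the mixed dimer/double-dimer count for $z_{0,1,3}^{(4)}$ gives $B_3 = 588$, and record that they ``conjecture the associated Laurent polynomials agree as well and that this pattern continues.'' There is therefore no paper proof to compare your proposal against.

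Your write-up is not a proof either; it is a research program. You yourself flag the two essential gaps: (i) you do not actually \emph{define} the taut condition for general $(i,j,k)$ --- you only say it ``should'' be a non-crossing connectivity bijection that ``reduces to'' the rainbow condition in the equilateral case, without specifying the rule when the interface carries vertices of mixed multiplicity; and (ii) you do not prove that any such condition is preserved under superurban renewal, which is precisely the inductive step your argument needs. The remark after Definition \ref{def:self-int} in the paper (and Figure \ref{fig:forbidden-B3}) shows that the forbidden configurations are not captured by the tripartite rule of \cite{KW-tripart} alone, and the authors had to exclude certain crossings ``by looking at small examples.'' Until a uniform taut condition is written down and shown to be stable under the mutation sequence $\mu_1\mu_4\mu_2\mu_1\mu_3\mu_2$, the inductive strategy you sketch cannot be carried out. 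What you have is a plausible outline of how a proof might go, which is consistent with the conjecture's status as open.
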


As a first test case for this conjecture, we consider the half-integer coordinates 
$$\{A(i+\frac{1}{2},j+\frac{1}{2}, k) \}~~\cup~~  \{A(i+\frac{1}{2},j, k+\frac{1}{2})\} ~~\cup~~  \{A(i,j+\frac{1}{2}, k+\frac{1}{2})\}$$ for $i,j,k\in \mathbb{Z}$ that are proven to be Laurent polynomials but lack a combinatorial interpretation in \cite{KP}.

We already illustrated the subgraph for $\varphi\left(A(\frac{1}{2},0,\frac{3}{2})\right)$, i.e. $z_{-1,0,1}^{(4)}$, in Figure \ref{fig:Model4initial} when we drew the subgraph for $C_4^{(4)} = (1,0,0,0,1,-1)$.  Letting $x_1=\dots=x_6$, the number of perfect matchings in this subgraph is $B_1=3$.  We next illustrate the contours for $\varphi\left(A(\frac{1}{2},\frac{1}{2},2)\right)$, $\varphi\left(A(\frac{1}{2},0,\frac{5}{2})\right)$, and $\varphi\left(A(0,\frac{1}{2},\frac{5}{2})\right)$, which correspond to 
$z_{-1,1,2}^{(4)}$, $z_{1,0,2}^{(4)}$, and $z_{0,-1,2}^{(4)}$, respectively, in Figure \ref{fig:B2-3}.  In Figure \ref{fig:B2-all-rotations}, we illustrate the core subgraphs obtained directly by applying Definition \ref{def:self-int} and the modified contours instead.  Notice that, in both cases, the three resulting subgraphs are $120$ degree rotations of one another, and that the modified contours exhibit this same rotational symmetry, a feature we do not see as directly from the unmodified contours.  Letting $x_1=\dots=x_6$, all three of the Laurent polynomials obtained by counting perfect matchings of these subgraphs collapse to $B_2=14$.

Applying Definition \ref{def:self-int} again, we build subgraphs (and determine the multiplicities of vertices) for $\varphi(A(0,\frac{1}{2},\frac{7}{2}))$, $\varphi(A(0,\frac{1}{2},\frac{9}{2}))$, and $\varphi(A(0,\frac{1}{2},\frac{11}{2}))$, which correspond to the toric variables $z_{0,1,3}^{(4)}$, $z_{0,-1,4}^{(4)}$, and $z_{0,1,5}^{(4)}$, respectively.  See Figures \ref{fig:B3-3}, \ref{fig:B4-1}, and \ref{fig:B5-1}.  These toric variables become $B_3 = 3 \times 14^2$, $B_4 = 14^4$, and $B_5 = 3 \times 14^6$, respectively, when $x_1=\dots=x_6=1$.  However unlike the $B_1$ and $B_2$ cases (where we got ordinary subgraphs), or the $A_n$ cases (where the entire interior of the core subgraph consisted of vertices of multiplicity 2), the resulting core subgraphs for $B_n$ (for $n\geq 3$) will contain both vertices of multiplicity one and of multiplicity two in its interior. Using Figure \ref{fig:B3-3}, we have verified that the number of mixed configurations (consisting of taut double-dimer configurations on the innermost region and dimer configurations outside of that) on $\mathcal{G}_4(4,-4,3,-1,1,-2)$, which corresponds to the modified contour $\mathcal{C}'_4(3,-3,2,-1,1,-2)$, counting cycles appropriately, is $B_3$ = 588 as desired.  See Figure \ref{fig:forbidden-B3} for examples of configurations on this graph disallowed by the taut condition.  We conjecture the associated Laurent polynomials agree as well and that this pattern continues for the entire $B_n$ sequence.

\begin{figure}
\includegraphics[width=6in]{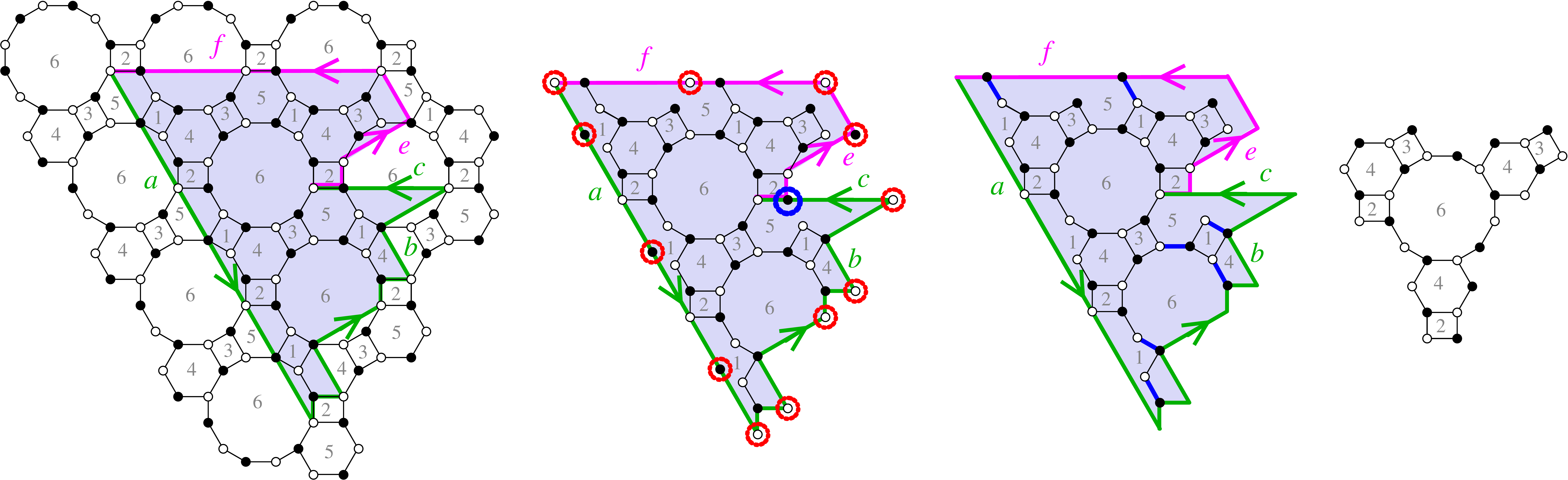}

\includegraphics[width=6in]{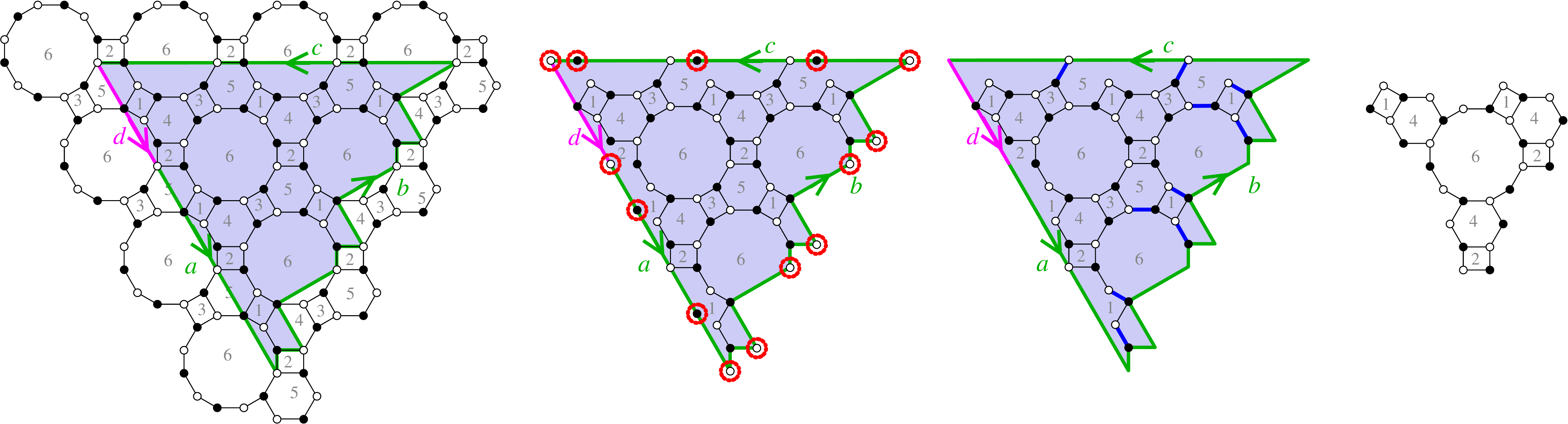}

\includegraphics[width=6in]{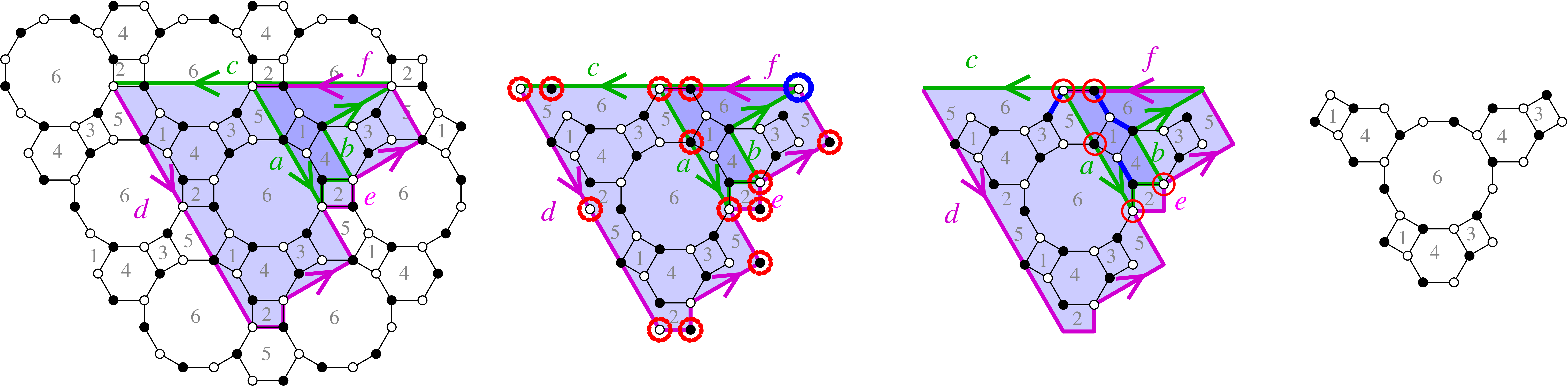}
\caption{Construction of the subgraphs coming from the contours  
$\mathcal{C}_4(3,-2,1,0,1-2)$ corresponding to $z_{-1,1,2}^{(4)}$ (top),
$\mathcal{C}_4(2,-3,3,-1,0,0)$ corresponding to $z_{1,0,2}^{(4)}$ (middle), and
$\mathcal{C}_4(1,-1,2,-2,2,-1)$ corresponding to $z_{0,-1,2}^{(4)}$  (bottom).  The case of $z_{0,-1,2}^{(4)}$ yields a self-intersecting contour, but we follow a rubric for deleting vertices and including forced edges to obtain a rotation of the above subgraphs.}
\label{fig:B2-3}
\end{figure}

\begin{figure}
\includegraphics[width=5in]{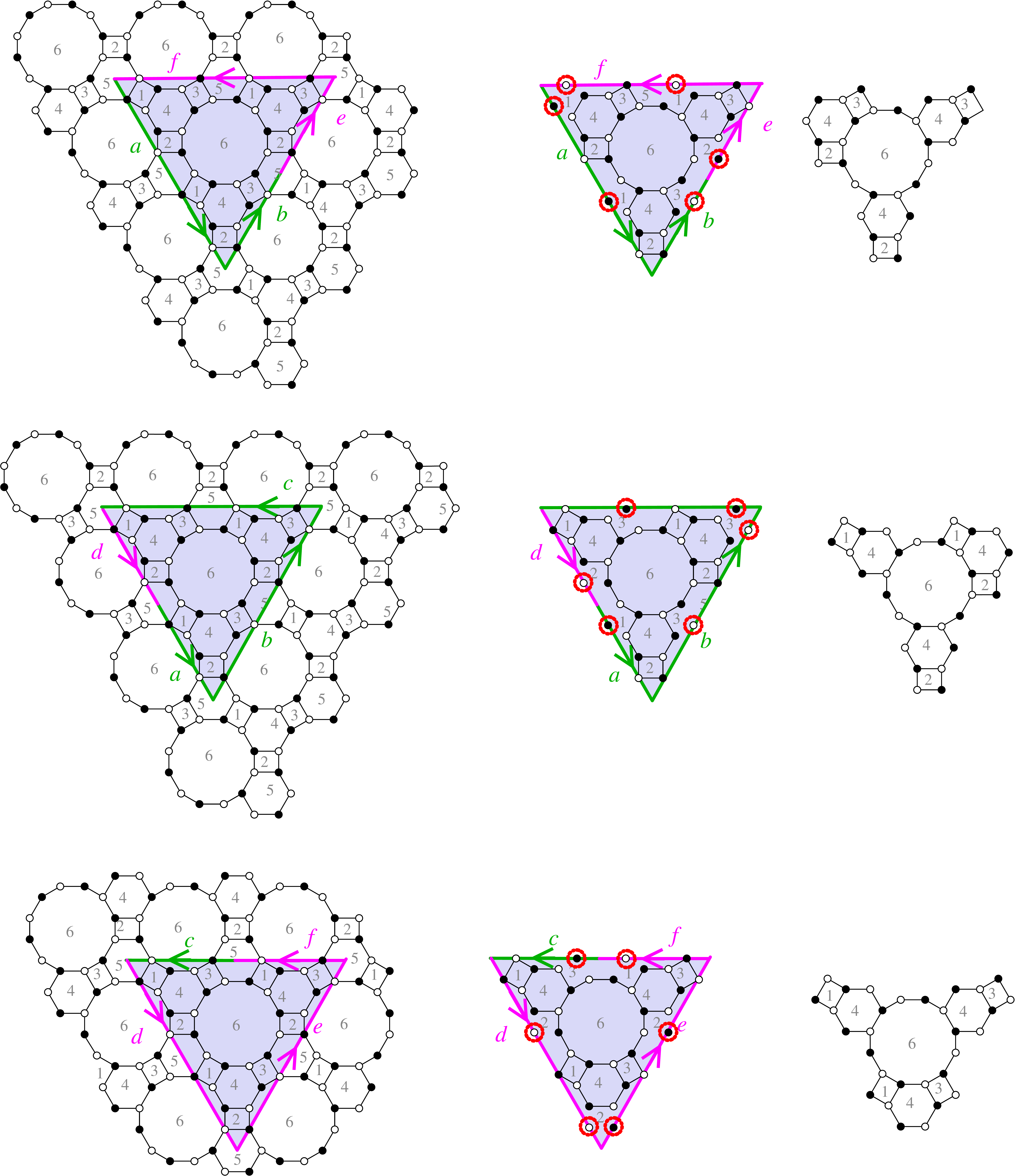}
\caption{Construction of the subgraphs using the modified contours 
$\mathcal{C}'_4(2,-1,0,0,1-2)$ (top),
$\mathcal{C}'_4(1,-2,2,-1,0,0)$ (middle), and
$\mathcal{C}'_4(0,0,1,-2,2,-1)$ (bottom).  The resulting subgraphs agree with those in Figure \ref{fig:B2-3} but no forced edges are required, even for $z_{0,-1,2}^{(4)}$.}
\label{fig:B2-all-rotations}
\end{figure}

\begin{figure}
\includegraphics[width=6in]{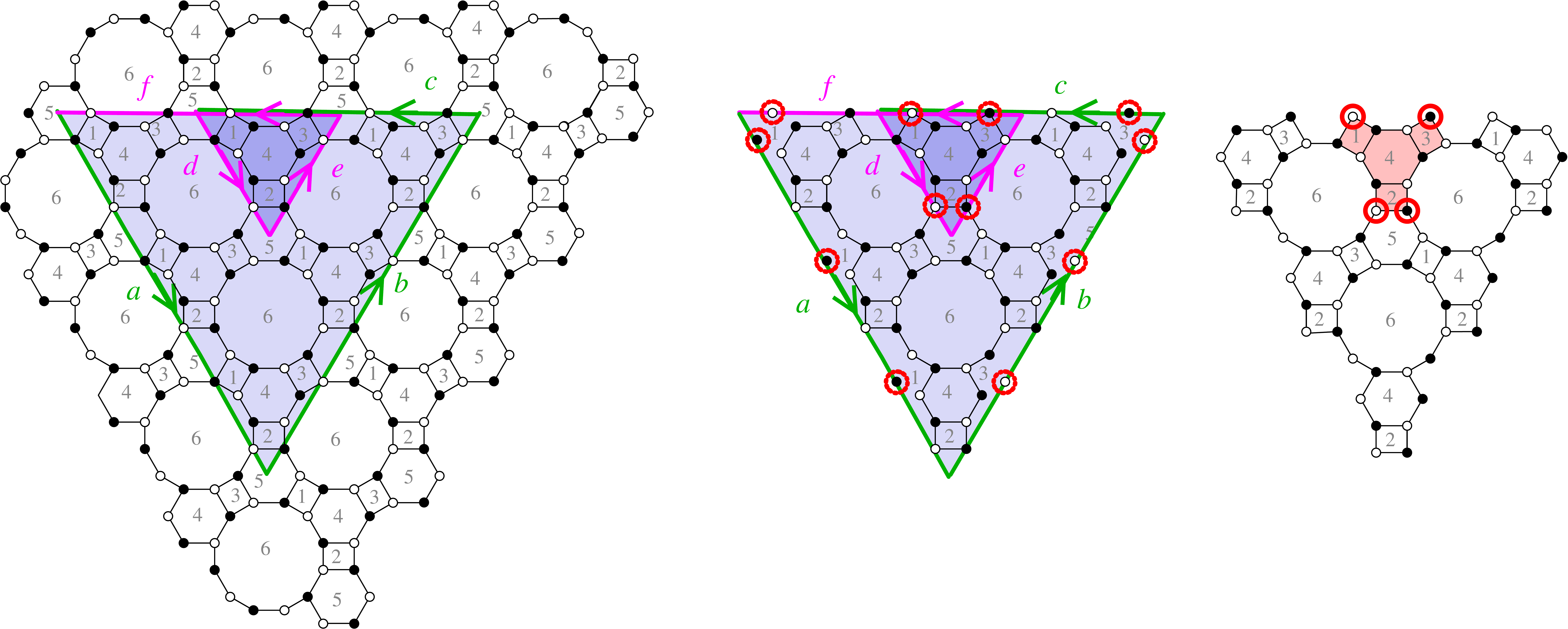}
\caption{Construction for $z_{0,1,3}^{(4)}$ using the self-intersecting contour $\mathcal{C}_4(4,-4,3,-1,1,-2)$ and the modified contour $\mathcal{C}'_4(3,-3,2,-1,1,-2)$.  Circled vertices in the third picture have multiplicity $1$ while the vertices in the interior of the shaded region have multiplicity $2$.}
\label{fig:B3-3}
\end{figure}

\begin{figure}
\includegraphics[width=6in]{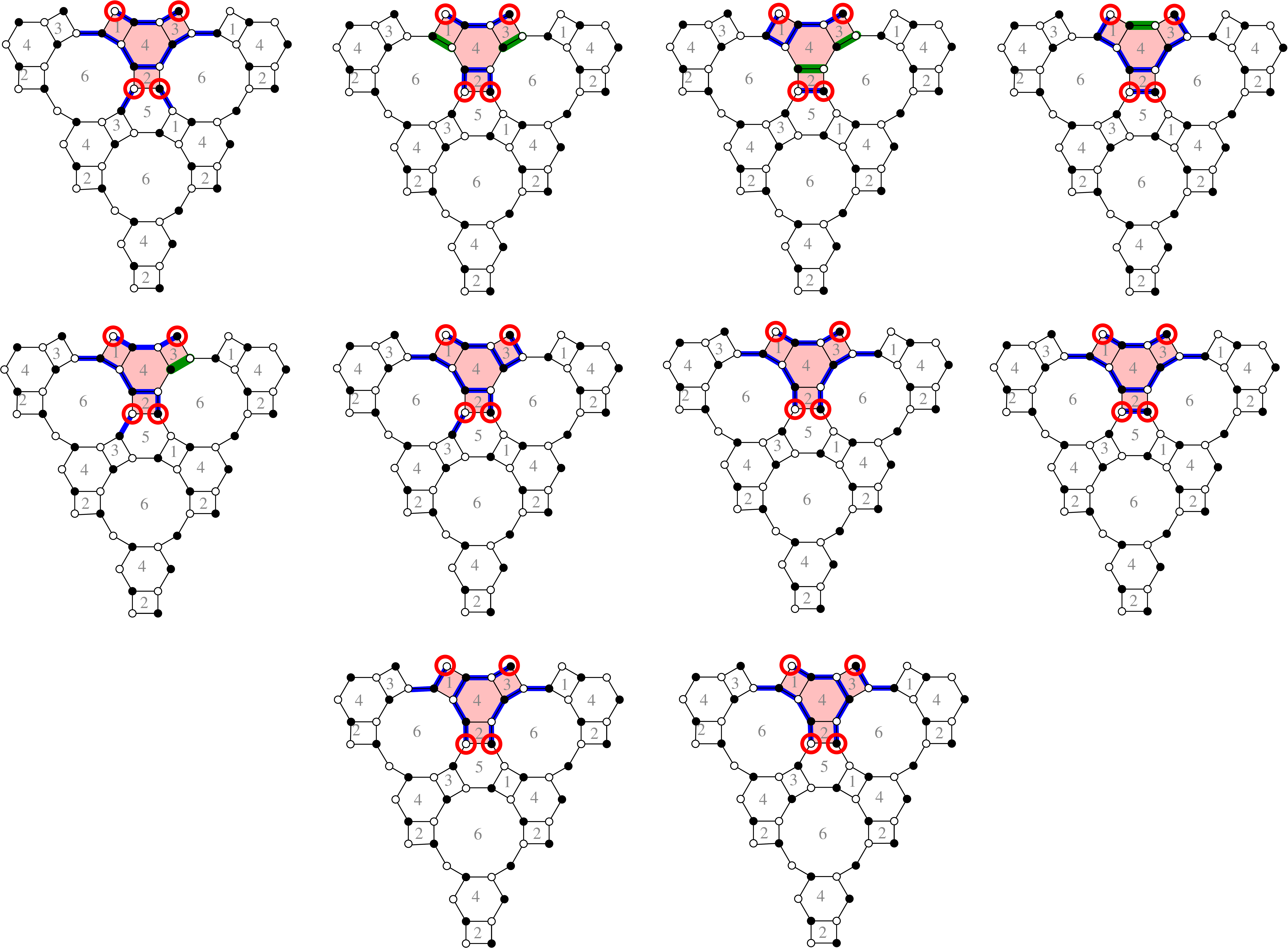}
\caption{Examples of forbidden configurations for the case of $z_{0,1,3}^{(4)}$.  We arrange these representatives into groups according to their connectivity outside the double-dimer region.  Each of these double-dimer configurations can be completed to a mixed configuration in multiple ways by choosing a perfect matching of the remaining vertices.  Most of these configurations (except for the two in the bottom row) are disallowed because the two multiplicity one vertices at the top of the graph are connected to one another by a path.}
\label{fig:forbidden-B3}
\end{figure}

\begin{figure}
\includegraphics[width=6in]{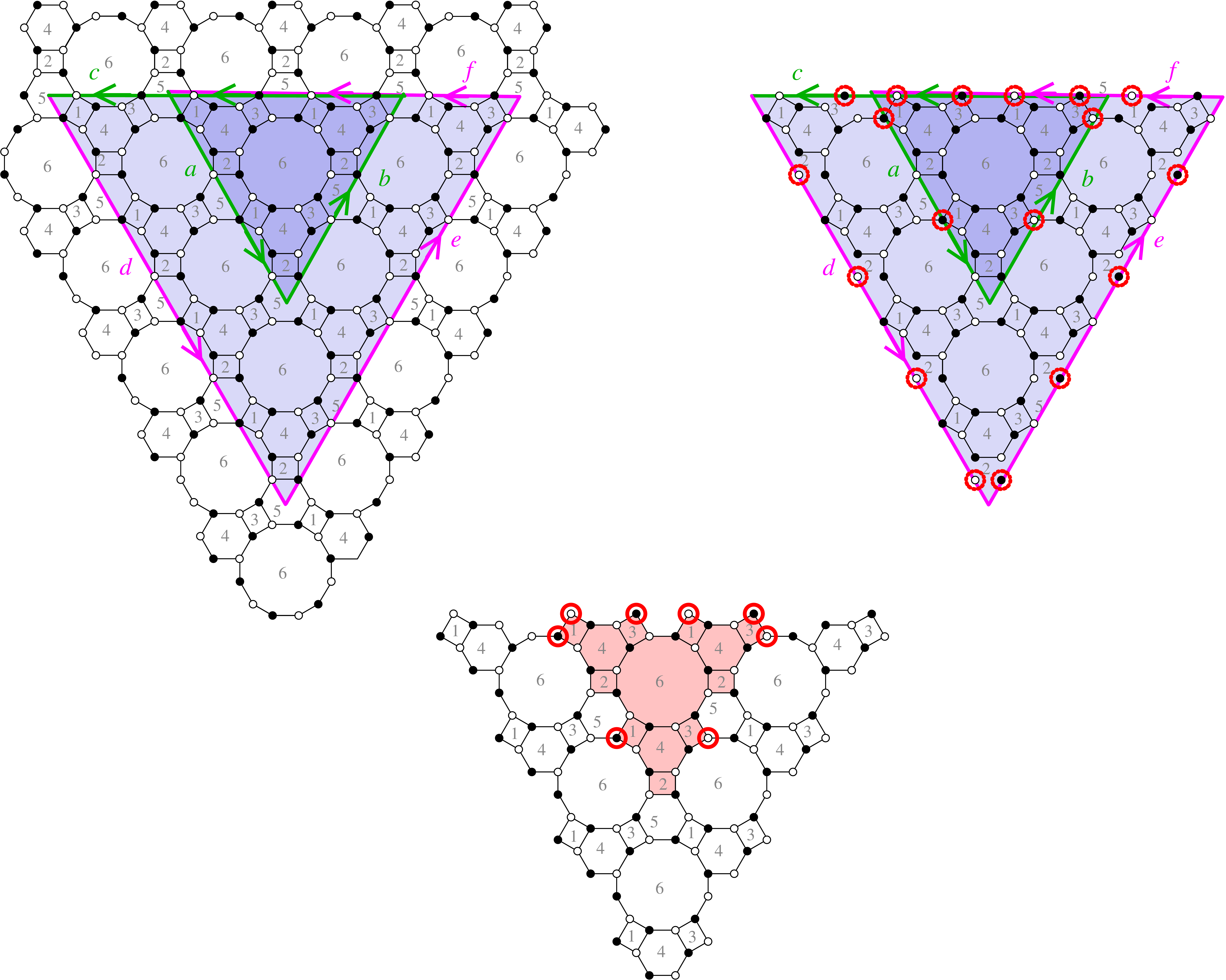}
\caption{Construction for $z_{0,-1,4}^{(4)}$ using the self-intersecting contour $\mathcal{C}_4(3,-3,4,-4,4,-3)$ and the modified contour $\mathcal{C}'_4(2,-2,3,-4,4,-3)$.  Circled vertices in the third picture have multiplicity $1$ while the vertices in the interior of the shaded region have multiplicity $2$.}
\label{fig:B4-1}
\end{figure}

\begin{figure}
\includegraphics[width=6in]{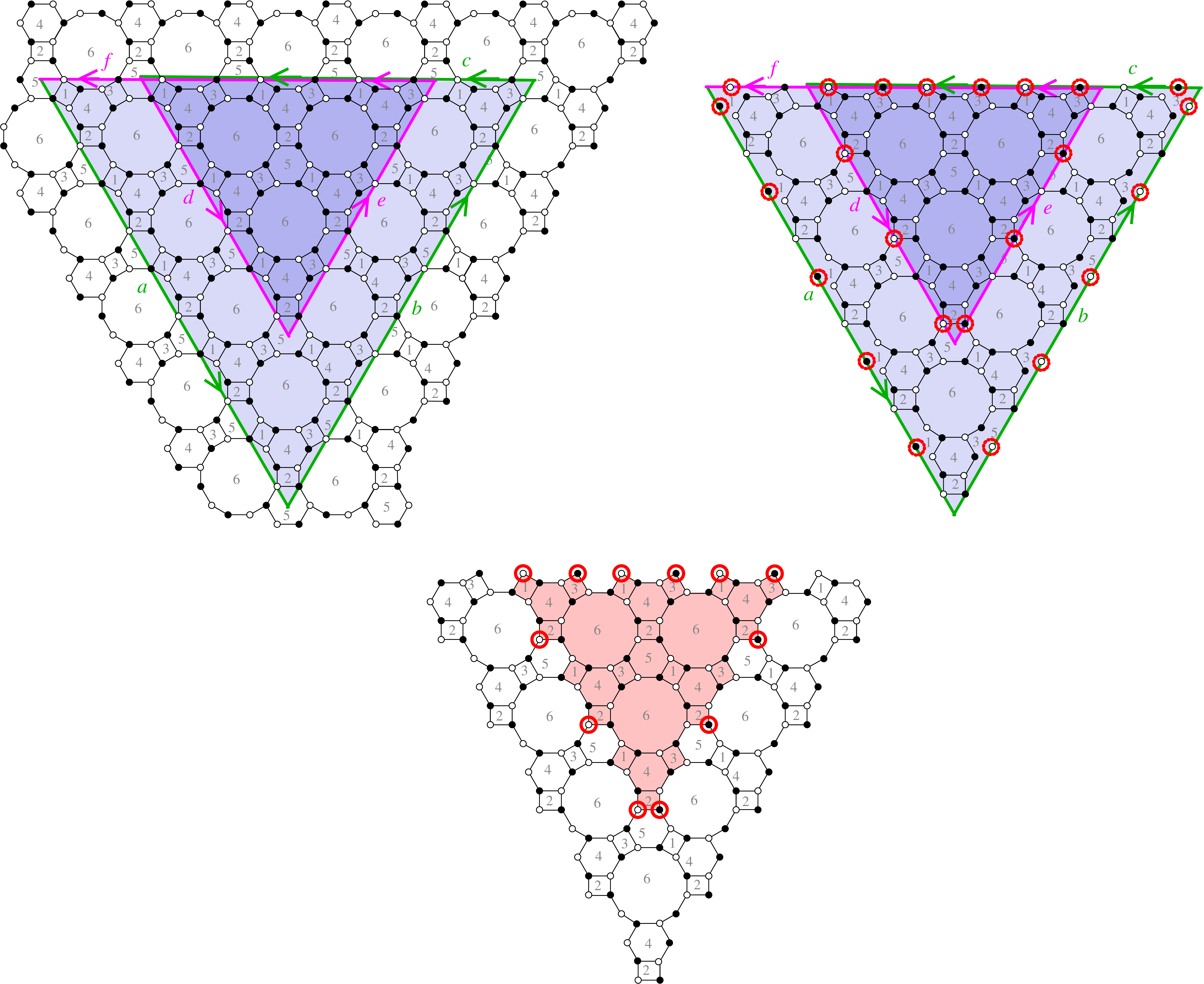}
\caption{Construction for $z_{0,1,5}^{(4)}$ using the self-intersecting contour $\mathcal{C}_4(6,-6,5,-3,3,-4)$ and the modified contour $\mathcal{C}'_4(5,-5,4,-3,3,-4)$.  Circled vertices in the third picture have multiplicity $1$ while the vertices in the interior of the shaded region have multiplicity $2$.}
\label{fig:B5-1}
\end{figure}

\section{Open Questions}

\label{sec:open}

\begin{problem}
As alluded to in the previous section, we wish to better understand the combinatorial interpretations associated to self-intersecting contours where the dimer model is insufficient.  The double-dimer interpretation of Kenyon and Pemantle \cite{KP} provides one such infinite family, the $A_n$ (i.e. $\varphi\left(A(0,0,n+2)\right)$ sequence, in the case of Model 4.  We wish to flesh out this interpretation, as depicted in Conjecture \ref{conj:self-int}, and illuminate how this one-dimensional family relates to the larger set of cluster variables reachable by any toric mutation sequence.  Furthermore, a better understanding of self-intersecting contours in the Model 4 case would provide an additional approach to Problem 9.1 of \cite{LaiMus}, which focused on the Model 1 case, complementing previous examples worked out by the second author and Speyer.
\end{problem}

\begin{problem}
Furthermore, in \cite[Sec. 5 and 6]{KP}, limit shapes of solutions of the hexahedron recurrence are presented.  As detailed above, this (essentially) one-parameter family of double-dimer regions naturally embed inside the larger three-dimensional family of contours from toric cluster variables for Model 4.  Hence, it is an interesting problem to understand limit shapes when there is more than one possible dimension of growth.  Additionally, how do the limit shapes for Model 4 compare with those for Models 1, 2, or 3?
\end{problem}

\begin{problem}
There are numerous additional lattices and contours where the numbers of tilings (equivalently perfect matchings of their dual graphs) are counted by integers that factor into a small number of perfect powers.  Are related cluster algebraic interpretations possible for Generalized Fortresses, as in \cite[Sec. 3]{LaiNewAspects}, and Needle regions, as defined in \cite[Sec. 5]{LaiNewDungeon}?
\end{problem}

\begin{problem}
In \cite{trim}, the first author conjectured a simple product formula for a certain weighted matching numbers for the $A^{(i)}$ and $F^{(i)}$-graphs (see Conjecture 7.2). Despite providing a weighted enumeration via cluster algebras and Theorem \ref{thm:z3} in our current work, it does \emph{not} look like our result for Model 3 implies this conjecture. It would be interesting to find a different cluster algebra model that could yield the weight assignment as desired in the conjecture.
\end{problem}

\section*{Acknowledgments}
\par{
We are grateful to Richard Eager, Sebastian Franco, Rick Kenyon, and David Speyer for numerous helpful discussions.  We thank the referees for their helpful comments.  We also used the cluster algebras package \cite{Sage2} in Sage \cite{Sage} for numerous computations.  T. L. was supported by Simons Foundation Collaboration Grant \# 585923. G.M. was supported by NSF grants DMS-$1148634$ and DMS-$1362980$.
}

\bibliographystyle{alphaurl}

\begin{thebibliography}{FHHU01}

\bibitem[BPW09]{BMPW} M. Bosquet-M\'elou, J. Propp, and J. West, \emph{Perfect
matchings for the three-term Gale-Robinson sequences}, Electron. J. Combin. \textbf{16}(1) (2009), R125.

\bibitem[Ciu]{Ciucutrim} M.~{Ciucu}. \newblock{Aztec dungeons and powers of 13.} \newblock{\em Combinatorics Seminar,} Georgia Institute of
Technology, October 2000.

\bibitem[Ciu03]{Ciucu} M.~{Ciucu}. \newblock{Perfect matchings and perfect powers}. \newblock{\em J. Algebraic Combin.}, 17: 335--375, 2003.

\bibitem[CF15]{CF} M.~Ciucu and I.~Fischer.
\newblock{Proof of two conjectures of Ciucu and Krattenthaler on the enumeration of lozenge tilings of hexagons with cut off corners}. \newblock{\em J. Combin. Theory Ser. A}, 33 (2015), 228--250. 

\bibitem[CL14]{lai'}
M.~Ciucu and T.~Lai.
\newblock {Proof of Blum's conjecture on hexagonal dungeons}.
\newblock {\em Journal of Combinatorial Theory, Series A}, 125:273--305, 2014.

\bibitem[CL19]{CL19} M.~Ciucu and T.~Lai.
\newblock{Lozenge tilings doubly-intruded hexagons}. \newblock{\em Accepted for publication in J. Combin. Theory Ser. A (2019)}.  \newblock{Preprint   \href {http://arxiv.org/abs/1712.08024}{\path{1712.08024}}}.

\bibitem[CY10]{CY}
C.~{Cottrell} and B.~{Young}.
\newblock {Domino shuffling for the Del Pezzo 3 lattice}.
\newblock {\em ArXiv e-prints}, October 2010.
\newblock \href {http://arxiv.org/abs/1011.0045} {\path{arXiv:1011.0045}}.

\bibitem[DWZ08]{DWZ} H.~Dersen, J.~Weyman and A.~Zelevinsky.
\newblock{Quivers with potentials and their representations. {I}. {M}utations}.
\newblock{\em Selecta Math.} {\bf 14}, 2008, 1, 59--119.

\bibitem[EF12]{franco_eager}
R.~Eager and S.~Franco.
\newblock {Colored BPS Pyramid Partition Functions, Quivers and Cluster
  Transformations}.
\newblock {\em JHEP}, 1209:038.

\bibitem[FHHU01]{feng}
Bo~Feng, Amihay Hanany, Yang-Hui He, and Angel~M. Uranga.
\newblock {Toric Duality as Seiberg Duality and Brane Diamonds}.
\newblock {\em JHEP}, 0112:035, 2001.
\newblock \href {http://arxiv.org/abs/0109063} {\path{arXiv:0109063}}, \href
  {http://dx.doi.org/10.1088/1126-6708/2001/12/035}
  {\path{doi:10.1088/1126-6708/2001/12/035}}.

\bibitem[FHKV05]{Quivering}
Bo~Feng, Yang-Hui~He, Kristian~D.~Kennaway, and Cumrun~Vafa.
\newblock{Dimer Models from Mirror Symmetry and Quivering Amoebae}.
\newblock{{\em Advances in Theoretical and Mathematical Physics}, Vol. 12, no. 3, 2008,
[\href{http://arxiv.org/abs/hep-th/0511287}{{\tt hep-th/0511287}}].}

\bibitem[FZ01]{FZ}
S.~{Fomin} and A.~{Zelevinsky}.
\newblock {Cluster algebras I: Foundations}.
\newblock {\em Journal of the American Mathematical Society}, 15(2):497--529: 2002

\bibitem[FZ02]{laurent}
S.~{Fomin} and A.~{Zelevinsky}.
\newblock {The {L}aurent phenomenon}.
\newblock {\em Advances in Applied Mathematics}, 28(2):119--144: 2002

\bibitem[FHK{\etalchar{+}}06]{brane_dimer}
S.~Franco, A.~Hanany, K.~D. Kennaway, D.~Vegh, and B.~Wecht.
\newblock {Brane dimers and quiver gauge theories}.
\newblock {\em JHEP}, 0601:096.

\bibitem[HV05]{Hanany-Vegh}
A.~Hanany and D.~Vegh, \emph{{Quivers, tilings, branes and rhombi}},
  \href{http://dx.doi.org/10.1088/1126-6708/2007/10/029}{\emph{JHEP} {\bf 10}
  (2007) 029}, [\href{http://arxiv.org/abs/hep-th/0511063}{{\tt
  hep-th/0511063}}].

\bibitem[GK12]{GK}
A.~Goncharov and R.~Kenyon.
\newblock {Dimers and Cluster Integrable Systems}.
\newblock {\em ArXiv Mathematics e-prints}, November 2012.
\newblock \href {http://arxiv.org/abs/math/1107.5588}
  {\path{arXiv:math/1107.5588}}.

\bibitem[HK05]{Hanany-Kennaway}
A.~Hanany and K.D.~Kennaway.
\newblock{Dimer Models and Toric Diagrams}.
\newblock {\em ArXiv Mathematics e-prints}, 2005.
\newblock \href {http://arxiv.org/abs/hep-th/0503149}
  {\path{arXiv:hep-th/0503149}}.




\bibitem[HS12]{hanany_polygons}
A.~Hanany and R.~Seong.
\newblock {Brane Tilings and Reflexive Polygons}.
\newblock {\em Fortsch. Phys.}, 60:695--803: 2012

\bibitem[Jeo]{jeong}
I.~Jeong.
\newblock {Bipartite Graphs, Quivers, and Cluster Variables }.
\newblock 2011.
\newblock URL: \url{http://www.math.umn.edu/~reiner/REU/Jeong2011.pdf}.

\bibitem[JMZ13]{JMZ}
I.~Jeong, G.~Musiker, and S.~Zhang.
\newblock {Gale-Robinson Sequences and Brane Tilings}.
\newblock {\em DMTCS proc. AS}, pages 737--748, 2013.
\newblock URL:
  \url{http://www.liafa.jussieu.fr/fpsac13/pdfAbstracts/dmAS0169.pdf}.

\bibitem[{Ken}03]{dimermodel}
R.~{Kenyon}.
\newblock {An introduction to the dimer model}.
\newblock {\em ArXiv Mathematics e-prints}, October 2003.
\newblock \href {http://arxiv.org/abs/math/0310326}
  {\path{arXiv:math/0310326}}.

\bibitem[KP16]{KP} R. Kenyon and R. Pemantle.
\newblock{Double-dimers, the Ising model and the hexahedron recurrence}.
\newblock{\em J. Combin. Theory Ser. A}, 137:27--63: 2016.

\bibitem[KW09]{KW-tripart} R. Kenyon and D. Wilson.
\newblock{Combinatorics of Tripartite Boundary Connections for Trees and Dimers}.
\newblock{\em Electron.  J.  Combin.}, 16 (2009), R112.

\bibitem[KW17]{KW}
R. Kenyon and D. Wilson.
\newblock{The Space of Circular Planar Electrical Networks.}  \newblock{\em SIAM J. Discrete Math.}, 31(1) (2017), 1--28.


\bibitem[Kok09]{Kokhas} K.P.~{Kokhas}. \newblock{Domino tilings of aztec diamonds and squares}. \newblock{\em J. Math. Sci.}, 158(6), 868--894, 2009.

\bibitem[{Kuo04}]{kuo}
E.~H. {Kuo}.
\newblock {Applications of Graphical Condensation for Enumerating Matchings and
  Tilings}.
\newblock {\em Theoretical Computer Science}, 319:29--57: 2004.



\bibitem[Lai13]{LaiNewAspects}
\newblock{New aspects of regions whose tilings are enumerated by perfect powers}
\newblock{\em Electronic Journal of Combinatorics}, 20(4), \#P31.47: 2013.

\bibitem[Lai16]{LaiNewDungeon}
T.~Lai
\newblock {A Generalization of Aztec Dragon.}
\newblock { \em Graphs and Combinatorics} , 32(5), 1979--1999: 2016.

\bibitem[Lai17a]{lai}
T.~Lai.
\newblock {Proof of a Refinement of Blum's Conjecture on Hexagonal Dungeons}.
\newblock {\em Discrete Mathematics}, 340(7), 1617--1632: 2017.

\bibitem[Lai17b]{trim}
T.~{Lai}. \newblock{Perfect Matchings of Trimmed Aztec Rectangles}. \newblock{\em Electronic Journal of Combinatorics}, 24(4),  \#P4.19: 2017.

\bibitem[Lai17c]{Tri2} T.~Lai. \newblock{A $q$-enumeration of lozenge tilings of a hexagon with three dents.}  \newblock{\em Adv.  Applied Math.}, 82 (2017), 23--57.

\bibitem[Lai17d]{Tri1} T.~Lai. \newblock{A $q$-enumeration of a hexagon with four adjacent triangles removed from the boundary.} \newblock{\em European J.  Combin.}, 64 (2017), 66--87.

\bibitem[Lai18a]{Halfhex1}
T.~Lai. \newblock{Lozenge Tilings of a Halved Hexagon with an Array of Triangles Removed from the Boundary.} \newblock{\em SIAM J. Discrete Math.}, 32(1) (2018), 783--814.

\bibitem[Lai18b]{Halfhex2}
T.~Lai. \newblock{Lozenge Tilings of a Halved Hexagon with an Array of Triangles Removed from the Boundary, Part II.} \newblock{\em Electron. J.  Combin.}, 25(4) (2018), \#P4.58.


\bibitem[Lai18b]{Halfhex2}
T.~Lai. \newblock{Lozenge Tilings of a Halved Hexagon with an Array of Triangles Removed from the Boundary, Part II.} \newblock{\em Electron. J.  Combin.}, 25(4) (2018), \#P4.58.

\bibitem[Lai19]{Minor}T.~Lai, \newblock{Proof of a Conjecture of Kenyon and Wilson on Semicontiguous Minors.} \newblock{\em J. Combin. Theory, Ser. A}, \textbf{161} (2019), 134--163.

\bibitem[LM17]{LaiMus}
T.~Lai and G. Musiker
\newblock {Beyond Aztec Castles: Toric Cascades in the $dP_3$ Quiver}
\newblock{ \em Comm. in Math. Phys.}, 356(3), 823--881: 2017.

\bibitem[LMNT14]{LMNT}
M.~Leoni, G.~Musiker, S.~Neel, and P.~Turner
\newblock{ Aztec Castles and the dP3 Quiver}.
\newblock{J. Phys. A: Math. Theor.}, 47 474011: 2014.

\bibitem[MS11]{Sage2}
Gregg Musiker and Christian Stump.
\newblock { A Compendium on the Cluster Algebra and Quiver Package in Sage}.
\newblock {\em S\'{e}minaire Lotharingien de Combinatoire}, 65:B65d: 2011

\bibitem[Pro99]{Propp}
J.~{Propp}.
\newblock{Enumeration of matchings: Problems and progress, New Perspectives in Geometric
Combinatorics,} \newblock{Cambridge University Press, 1999,} 255--291.

\bibitem[{Sp08}]{speyer}
D.~E {Speyer}.
\newblock {Perfect Matchings and the Octahedron Recurrence}.
\newblock {\em Journal of Algebraic Combinatorics}, 25:309--348: 2008

\bibitem[S{\etalchar{+}}17]{Sage}
W.\thinspace{}A. Stein et~al., \emph{{S}age {M}athematics {S}oftware
({V}ersion 8.1)}, The Sage Development Team, 2017, {\tt
http://www.sagemath.org}.

\bibitem[Zha]{zhang}
S.~Zhang.
\newblock {Cluster Variables and Perfect Matchings of Subgraphs of the $dP_{3}$
  Lattice}.
\newblock 2012.
\newblock URL: \url{http://www.math.umn.edu/~reiner/REU/Zhang2012.pdf}.
\newblock \href {http://arxiv.org/abs/1511.06055} {\path{arXiv:1511.0655}}.

\end{thebibliography}

\newcommand{\etalchar}[1]{$^{#1}$}

\end{document}